\newtheorem{teo}{Theorem}[section]
\newtheorem{cor}[teo]{Corollary}
\newtheorem{lem}[teo]{Lemma}
\newtheorem{defi}[teo]{Definition}
\newtheorem{oss}[teo]{Remark}
\newtheorem{prop}[teo]{Proposition}
\newtheorem{example}[teo]{Example}
\def\pullback{
	\ar@{-}[]+R+<6pt,-1pt>;[]+RD+<6pt,-6pt>%
	\ar@{-}[]+D+<1pt,-6pt>;[]+RD+<6pt,-6pt>}
\newcommand{\zeros}{\mathcal{Z}}
\newcommand{\zideal}{\mathcal{N}_{\zeros}}
\newcommand{\op}[1]{#1^{\mathrm{op}}}
\newcommand{\tort}{\mathcal{T}}
\newcommand{\torf}{\mathcal{F}}
\newcommand{\torsione}{(\tort, \torf)}
\newcommand{\facte}{\mathbb{E}}
\newcommand{\factm}{\mathbb{M}}
\newcommand{\frecce}{\Arr(\mathcal{C})}
\newcommand{\mset}{\mathbf{MSet}}
\newcommand{\terminal}{\boldsymbol{1}}
\newcommand{\initial}{\boldsymbol{2}}
\newcommand{\iniziale}{\boldsymbol{0}}
\newcommand{\tbang}[1]{\tau_{#1}}
\newcommand{\ibang}[1]{\iota_{#1}}
\newcommand{\dn}[1]{#1_{\lnot \lnot}}
\newcommand{\boole}{\mathbf{Boole}}
\newcommand{\heyting}{\mathbf{Heyt}}
\newcommand\myfunc[5]{\begin{aligned}
		#1 \colon #2 &\to #3\\
		#4 &\mapsto #5
\end{aligned}}
\newcommand{\pd}{\mathbf{PD}}
\newcommand{\sset}{\mathbf{sSet}}
\newcommand{\set}{\mathbf{Set}}
\newcommand{\Ab}{\mathbf{Ab}}
\newcommand{\Divm}{\mathbf{Div}_m}
\newcommand{\coslice}{\mathbb{Z}_m/\mathbf{Ab}}
\newcommand{\Zm}{\mathbb{Z}_m}
\newcommand{\Zh}{\mathbb{Z}_h}
\newcommand{\graffe}[1]{\{ #1 \}}
\newcommand{\spigolose}[1]{\langle #1 \rangle}
\newcommand{\suc}[1]{(#1)_{n \in \mathbb{N}}}
\newcommand{\ped}[1]{_{#1}}
\newcommand{\Dm}{\D_m}
\newcommand{\Homcos}{\Hom\ped{\coslice}}
\newcommand{\Homab}{\Hom\ped{\Ab}}
\newcommand{\invf}[1]{f^{-1}(#1)}
\newcommand{\inve}{e^{-1}(0)}
\newcommand{\invm}{m^{-1}(0)}
\newcommand{\mv}{\mathbf{MV}}
\newcommand{\Zum}{\mathbb{Z}[1/m]}
\DeclareMathOperator{\Hom}{Hom}
\DeclareMathOperator{\Arr}{Arr}
\DeclareMathOperator{\Zker}{\zeros-ker}
\DeclareMathOperator{\Zcoker}{\zeros-coker}
\DeclareMathOperator{\Eq}{Eq}
\DeclareMathOperator{\Sub}{Sub}
\DeclareMathOperator{\Fix}{Fix}
\DeclareMathOperator{\D}{D}
\DeclareMathOperator{\ord}{ord}
\DeclareMathOperator{\Rad}{Rad}
\DeclareMathOperator{\Inf}{Inf}
\def\mathsfdef#1{\expandafter\def\csname#1\endcsname{{\sf#1}}}
\begin{document}
	
	\title[Torsion Theories in a non-pointed Context]{Torsion Theories in a non-pointed Context}
	
	\author[A. Cappelletti]{Andrea Cappelletti}
	\address[Andrea Cappelletti]{Dipartimento di Matematica, Universit\`{a} degli Studi di Salerno, Via Giovanni Paolo II 132, 84084 Fisciano (SA), Italy}
	\thanks{}
	\email{acappelletti@unisa.it}
	
	\author[A. Montoli]{Andrea Montoli}
	\address[Andrea Montoli]{Dipartimento di Matematica ``Federigo Enriques'', Universit\`{a} degli Studi di Milano, Via Saldini 50, 20133 Milano, Italy}
	\thanks{} \email{andrea.montoli@unimi.it}
	
	\keywords{torsion theory, factorization system, Galois structure, non-pointed categories}
	
	\subjclass[2020]{18E40, 18E08, 18E13, 18E50, 18A20, 18A32}
	
	\begin{abstract}
		We study a non-pointed version of the notion of torsion theory in the framework of categories equipped with a posetal monocoreflective subcategory such that the coreflector inverts monomorphisms. We explore the connections of such torsion theories with factorization systems and categorical Galois structures. We describe several examples of these torsion theories, in the dual of elementary toposes, in varieties of universal algebras used as models for non-classical logic, and in coslices of the category of abelian groups.
	\end{abstract}
	
	\date{\today}
	
	\maketitle
	
	\section{Introduction}
	The study of torsion theories has been initiated in \cite{Dickson} in the context of abelian categories, and it was extended in \cite{JT torsion th} to the setting of pointed categories. A torsion theory in a pointed category $\mathcal{C}$ is a pair $(\tort, \torf)$ of full, replete subcategories of $\mathcal{C}$ such that every morphism $T \to F$, with $T \in \tort$ and $F \in \torf$, factors through $0$ and, moreover, for every object $C \in \mathcal{C}$ there exists a short exact sequence
	\[\begin{tikzcd}
		T & C & F
		\arrow["k", from=1-1, to=1-2]
		\arrow["q", from=1-2, to=1-3]
	\end{tikzcd}\]
	with $T \in \tort$ and $F \in \torf$. Here by short exact sequence we mean that $k$ is a kernel of $q$ and $q$ is a cokernel of $k$.
	
	The classical notion of torsion theory for abelian categories has strong connections with the ones of closure operator, radical, and factorization system. Similar connections still exist in the pointed, non-abelian case. In \cite{BG torsion th} torsion theories are studied in the context of homological \cite{Bbbook} categories, namely pointed, regular, protomodular \cite{Bourn protomod} categories. The name \emph{homological} categories is justified by the fact that, in these categories, non-abelian versions of the classical homological lemmas hold. In \cite{BG torsion th} the authors explore the connections between torsion theories, closure operators and radicals in homological categories. In \cite{Gran cond N} the relation between torsion theories and factorization systems is studied in normal \cite{Zurab normal cats} categories, i.e.\ pointed regular categories in which every regular epimorphism is a cokernel (every homological category is normal). In \cite{Gran torsione} the authors restrict their attention to those torsion theories (in homological categories) in which the reflector of the torsion-free subcategory is protoadditive \cite{everaert2010homology}. A functor between pointed protomodular categories is protoadditive if it preserves split short exact sequences; equivalently, protoadditive functors can be defined as functors preserving the zero object and pullbacks of split epimorphisms along any morphism. This notion is the analogue of the one of additive functor between additive categories. In \cite{Gran torsione} the authors show, among other things, that a torsion theory with protoadditive reflector allows for an easy description of the central and normal extensions w.r.t.\ the Galois structure associated with the reflection. This permits to use the tools of categorical Galois theory (in the sense of \cite{janelidze1990pure}) to describe homology objects relative to such reflection.
	
	More recently, torsion theories have been studied also in non-pointed contexts. In \cite{GrandisJan torsion th} the authors proposed a definition of torsion theory, suitable also for non-pointed categories, based on the notion of ideal of morphisms in the sense of \cite{Ehresmann}. Given an ideal of morphisms $\mathcal{N}$, a torsion theory in their sense is a pair $(\tort, \torf)$ of full, replete subcategories of a category $\mathcal{C}$ such that every morphism $T \to F$, with $T \in \tort$ and $F \in \torf$, belongs to $\mathcal{N}$ and, moreover, for every object $C \in \mathcal{C}$ there exists a short exact sequence
	\[\begin{tikzcd}
		T & C & F
		\arrow["k", from=1-1, to=1-2]
		\arrow["q", from=1-2, to=1-3]
	\end{tikzcd}\]
	with $T \in \tort$ and $F \in \torf$, where now short exact sequence means that $k$ is an $\mathcal{N}$-kernel of $q$ and $q$ is an $\mathcal{N}$-cokernel of $k$. The notions of $\mathcal{N}$-kernel and $\mathcal{N}$-cokernel w.r.t.\ an ideal of morphisms $\mathcal{N}$ are the natural generalizations of the usual notions of kernel and cokernel in pointed categories, see \cite{Ehresmann2}. In \cite{GrandisJan torsion th} the authors required that $\mathcal{N}$-kernels and $\mathcal{N}$-cokernels of identity morphisms exist. In \cite{Facchini1, Facchini2} the authors consider essentially the same notion of non-pointed torsion theory as in \cite{GrandisJan torsion th}, but without the requirement that $\mathcal{N}$-kernels and $\mathcal{N}$-cokernels exist. They call such torsion theories \emph{pretorsion theories}. In their case, the ideal of morphisms is given by those arrows that factor through the subcategory $\zeros = \tort \cap \torf$.
	
	The aim of this paper is to extend to this non-pointed context some of the results of \cite{Gran cond N} and \cite{Gran torsione}, and to provide several new examples of non-pointed torsion theories. Our framework will be the one of a category $\mathcal{C}$ equipped with a posetal monocoreflective subcategory $\zeros$ such that the coreflector inverts monomorphisms. The objects of $\zeros$ will be considered the \emph{trivial} objects, and we will study torsion theories with respect to the ideal of morphisms factoring through $\zeros$. The main general example of our situation will be given by a regular category $\mathcal{C}$ with initial object $0$, equipped with the subcategory $\zeros$ of regular quotients of $0$. This includes, as concrete examples, all varieties of universal algebras with at least one constant, their topological models, all elementary toposes and their duals, and every ideally exact category in the sense of \cite{Janelidze ideally exact}. Categories equipped with such a subcategory $\zeros$ have been considered in \cite{Cappelletti} to prove non-pointed versions of the classical homological lemmas. In our setting, $\mathcal{N}$-cokernels (that we will call $\zeros$-cokernels, since the ideal of morphisms is determined by the class of objects $\zeros$) do not always exist, not even those of the identity arrows. So, our torsion theories will not be, strictly speaking, examples of the situation considered in \cite{GrandisJan torsion th}, but they will be instances of the definition stated in \cite{Facchini2}.
	
	In Section \ref{preliminaries} we describe our setting and recall the non-pointed notion of torsion theory. In Section \ref{torsion th and fact sys} we establish, in the context of regular protomodular categories equipped with a posetal monocoreflective subcategory $\zeros$ such that the coreflector inverts monomorphisms, a correspondence between factorization systems $(\facte, \factm)$ that are stable w.r.t.\ the class of arrows inverted by the coreflector of $\zeros$ and such that every arrow in $\facte$ is a $\zeros$-cokernel, and torsion theories satisfying suitable conditions. In Section \ref{torsion th and Galois} we study which conditions are needed on a torsion theory to have a good behavior of the Galois structure induced by the reflection of the torsion-free subcategory. Section \ref{examples} is devoted to the description of several examples of non-pointed torsion theories, as well as their corresponding factorization systems and Galois structures. Some of the examples live in varieties of universal algebras used as models for non-classical logic, like MV-algebras or Heyting algebras; others are considered in the dual categories of some toposes; the last one lives in coslice categories of the category of abelian groups.
	
	\section{Preliminaries} \label{preliminaries}
	
	We begin by introducing the context in which we will work, already considered in \cite{Cappelletti}.
	
	\begin{prop}[\cite{Cappelletti}, Proposition 2.1]\label{contesto generale}
		Consider a category $\mathcal{C}$ and a full subcategory $\zeros$. The following conditions are equivalent:
		\begin{itemize}
			\item[(a)] $\zeros$ is a posetal monocoreflective subcategory such that the coreflector inverts monomorphisms;
			\item[(b)] \begin{itemize}
				\item[(i)] for every $A \in \mathcal{C}$ there exists a, unique up to isomorphisms, monomorphism $\varepsilon_A \colon Z \rightarrowtail A$, with $Z \in \zeros$;
				\item[(ii)] for every $A \in \mathcal{C}$ and $Z \in \zeros$, $\Hom(Z,A)$ has at most one element;
				\item[(iii)] for every pair of arrows $z \colon Z \rightarrowtail A$ and $z' \colon Z' \to A$, where $z$ is a monomorphism and $Z,Z' \in \zeros$, there exists a unique morphism $\varphi \colon Z' \to Z$ such that $z\varphi=z'$:
				\[\begin{tikzcd}
					{Z'} & A \\
					Z.
					\arrow["{z'}", from=1-1, to=1-2]
					\arrow["{\exists ! \varphi}"', from=1-1, to=2-1]
					\arrow["z"', tail, from=2-1, to=1-2]
				\end{tikzcd}\]
			\end{itemize}
		\end{itemize}
	\end{prop}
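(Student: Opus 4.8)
The plan is to prove the equivalence by showing $(a) \Rightarrow (b)$ and $(b) \Rightarrow (a)$, working directly from the definitions of posetal, monocoreflective, and ``inverts monomorphisms''. Recall that $\zeros$ monocoreflective means the inclusion $\zeros \hookrightarrow \mathcal{C}$ has a right adjoint (the coreflector) whose counit components are monomorphisms; posetal means $\Hom(Z, Z')$ has at most one element for any $Z, Z' \in \zeros$; and ``the coreflector inverts monomorphisms'' means that if $m \colon A \rightarrowtail B$ is a monomorphism in $\mathcal{C}$, then its image under the coreflector is an isomorphism.

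For $(a) \Rightarrow (b)$: denote the coreflector by $G$ and write $\varepsilon_A \colon GA \rightarrowtail A$ for the counit, which is a monomorphism by hypothesis; setting $Z = GA$ gives half of (i). For (ii), given $Z \in \zeros$ and $A \in \mathcal{C}$, the adjunction gives $\Hom(Z, A) \cong \Hom_{\zeros}(Z, GA)$, which has at most one element since $\zeros$ is posetal; this is exactly (ii). For (iii), given $z \colon Z \rightarrowtail A$ a monomorphism with $Z \in \zeros$ and $z' \colon Z' \to A$ with $Z' \in \zeros$, I would first argue that the counit $\varepsilon_A$ and $z$ ``agree up to a canonical iso'': since $z$ is a monomorphism, $Gz \colon GZ \to GA$ is an isomorphism; since $Z \in \zeros$ the counit $\varepsilon_Z$ is an isomorphism, so $Z \cong GZ \cong GA$, and transporting along this chain shows $z$ factors through $\varepsilon_A$ by an isomorphism. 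Then the required $\varphi$ is obtained by taking the adjoint transpose of $z'$ through $\varepsilon_A$ and composing with the inverse of that isomorphism; uniqueness follows because $z$ is a monomorphism (so at most one $\varphi$ with $z\varphi = z'$). The uniqueness up to isomorphism of $\varepsilon_A$ in (i) then follows from (iii) applied with $z' = \varepsilon_A$, another counit-type monomorphism: any two monomorphisms from objects of $\zeros$ into $A$ factor through each other, and posetality forces the factorizations to be mutually inverse.

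For $(b) \Rightarrow (a)$: I would define the candidate coreflector on objects by $A \mapsto Z_A$, where $\varepsilon_A \colon Z_A \rightarrowtail A$ is the monomorphism from (i). On arrows $f \colon A \to B$, the composite $f \varepsilon_A \colon Z_A \to B$ together with the monomorphism $\varepsilon_B \colon Z_B \rightarrowtail B$ fits the hypothesis of (iii) (with $Z' = Z_A$, $z = \varepsilon_B$, $z' = f\varepsilon_A$), yielding a unique $Gf \colon Z_A \to Z_B$ with $\varepsilon_B \cdot Gf = f \varepsilon_A$; uniqueness makes $G$ functorial and makes $\varepsilon$ natural, so $\varepsilon$ is the counit of an adjunction — the universal property $\Hom(Z, A) \cong \Hom(Z, Z_A)$ for $Z \in \zeros$ is again supplied by (iii). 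That $\zeros$ is posetal follows from (ii) with $A \in \zeros$. Finally, to see the coreflector inverts monomorphisms: if $m \colon A \rightarrowtail B$ is a monomorphism, then $m \varepsilon_A \colon Z_A \rightarrowtail B$ is a composite of monomorphisms, hence a monomorphism with domain in $\zeros$, so by the uniqueness clause in (i) it agrees with $\varepsilon_B$ up to isomorphism; chasing this identification shows $Gm$ is an isomorphism.

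The main obstacle I anticipate is the bookkeeping in $(a) \Rightarrow (b)$(iii) and in the uniqueness part of (i): one must carefully track the several canonical isomorphisms ($\varepsilon_Z$ for $Z \in \zeros$, $Gz$ for $z$ a monomorphism, and the transpose isomorphism of the adjunction) and check they are compatible, so that the produced $\varphi$ genuinely satisfies $z \varphi = z'$ and not merely $z \varphi \cong z'$. Posetality is what ultimately kills the ambiguity, but invoking it at the right moment — on $\Hom(Z', Z)$ after reducing everything into $\zeros$ via the adjunction — is the delicate point. Everything else is a routine diagram chase.
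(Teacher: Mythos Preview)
The paper does not actually contain a proof of this proposition: it is quoted verbatim from \cite{Cappelletti} (as indicated by the citation in the proposition header), and no argument is reproduced here. So there is no ``paper's own proof'' to compare against in this document.

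That said, your proposal is correct and is essentially the canonical argument one would expect. The $(a)\Rightarrow(b)$ direction is exactly right: the counit gives existence in (i), the adjunction bijection plus posetality gives (ii), and your handling of (iii) via $Gz$ being an isomorphism (from inverting monos) and $\varepsilon_Z$ being an isomorphism (since $Z\in\zeros$) correctly identifies $z$ with $\varepsilon_A$ up to iso, after which the transpose of $z'$ supplies $\varphi$. Uniqueness from $z$ mono is immediate, so the ``delicate point'' you flag is not really an obstacle: once you know $z=\varepsilon_A\circ(\text{iso})$, the equation $z\varphi=z'$ follows on the nose, not merely up to isomorphism. For $(b)\Rightarrow(a)$, your construction of $G$ and the verification that it inverts monomorphisms are correct; you might make the last step slightly more explicit by noting that (iii) applied to the mono $m\varepsilon_A$ and the arrow $\varepsilon_B$ yields a two-sided inverse to $Gm$, but this is exactly the ``chasing'' you indicate.
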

	We call such a subcategory $\zeros$ \emph{class of zero objects}, we denote by $\Z$ the coreflector and, given an object $A$ of $\mathcal{C}$, we will say that $\Z(A)$ is the \emph{zero part} of $A$. We denote by $\mathcal{N}_{\zeros}$ the class of arrows of $\mathcal{C}$ factorizing through an object of $\zeros$. $\mathcal{N}_{\zeros}$ is clearly an ideal of morphisms in the sense of \cite{Ehresmann}. \\
	
	As observed in \cite[Remark 2.2]{Cappelletti}, such a class $\zeros$, when it exists, is unique. So, admitting a class of zero objects in our sense is a property of the category $\mathcal{C}$. We will be particularly interested in the case in which $\mathcal{C}$ is a regular category with initial object and $\zeros$ is the class of regular quotients of the initial object. Examples of this situation are all varieties of universal algebras with at least one constant, their topological models, all elementary toposes and their duals, and every ideally exact category in the sense of \cite{Janelidze ideally exact}.
	
	\begin{lem}\label{mono e triviali}
		Let $\mathcal{C}$ be a category with a fixed class $\zeros$ of zero objects. Consider an arrow $f \colon A \rightarrow B$ and a monomorphism $m \colon B \rightarrowtail C$. If $mf \in \zideal$ then $f \in \zideal$.
	\end{lem}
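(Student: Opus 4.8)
The plan is to factor $f$ through the canonical zero part $\Z(B)$ of $B$, exploiting the universal property (b)(iii) of Proposition~\ref{contesto generale}. Since $mf \in \zideal$, by definition we may write $mf = z g$ for some $Z \in \zeros$ and arrows $g \colon A \to Z$ and $z \colon Z \to C$. Let $\varepsilon_B \colon \Z(B) \rightarrowtail B$ be the monomorphism provided by (b)(i); then the composite $m \varepsilon_B \colon \Z(B) \rightarrowtail C$ is again a monomorphism, and its domain $\Z(B)$ lies in $\zeros$.

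Next I would apply condition (b)(iii) to the ambient object $C$, the monomorphism $m\varepsilon_B \colon \Z(B) \rightarrowtail C$ and the arrow $z \colon Z \to C$ (whose domain $Z$ is in $\zeros$): this yields a morphism $\varphi \colon Z \to \Z(B)$ with $m \varepsilon_B \varphi = z$. Substituting, $mf = zg = m(\varepsilon_B \varphi g)$, and since $m$ is a monomorphism we may cancel it to obtain $f = \varepsilon_B (\varphi g)$. This exhibits $f$ as a morphism factoring through $\Z(B) \in \zeros$, hence $f \in \zideal$, as desired.

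There is no real obstacle here: the only point worth noticing is that one should not try to make the given factorization $mf = zg$ descend along $m$ directly (there is no reason for $z$ to factor through $m$), but rather to factor $f$ through the zero part $\Z(B)$, which is tied to $C$ via the monomorphism $m\varepsilon_B$ and therefore absorbs $z$ by the ``essential initiality'' of the objects of $\zeros$ recorded in (b)(iii). Alternatively, one could argue directly from (a), using that the coreflector $\Z$ inverts $m$ together with the naturality of the coreflection counit, but the argument above via (b)(iii) is the most economical.
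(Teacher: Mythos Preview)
Your proof is correct and follows essentially the same strategy as the paper's: factor $f$ through the zero part $\Z(B)$ of $B$ and then cancel the monomorphism $m$. The only cosmetic difference is that the paper uses characterization (a) of Proposition~\ref{contesto generale} (the coreflector $\Z$ inverts monomorphisms, so $\Z(m)$ is an isomorphism, and naturality gives $\varepsilon_C = m\varepsilon_B\Z(m)^{-1}$), whereas you invoke the equivalent condition (b)(iii) directly to absorb the arrow $z$ into $m\varepsilon_B$; you even mention this alternative yourself in your final remark.
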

	\begin{proof}
		Recall that $\Z(m)$ is an isomorphism. Thanks to \cite[Remark 2.3]{Cappelletti} we have the following factorization of $mf \in \zideal$:
		\[\begin{tikzcd}
			A & B & C \\
			& {\Z(C).}
			\arrow["f", from=1-1, to=1-2]
			\arrow["\chi"', from=1-1, to=2-2]
			\arrow["m", tail, from=1-2, to=1-3]
			\arrow["{\varepsilon_C}"', tail, from=2-2, to=1-3]
		\end{tikzcd}\]
		By naturality, $\varepsilon_C \Z(m)=m \varepsilon_B$. We observe that $m \varepsilon_B \Z(m)^{-1} \chi = \varepsilon_C \Z(m) \Z(m)^{-1} \chi = \varepsilon_C \chi = mf$ and, since $m$ is a monomorphism, we obtain $f=\varepsilon_B \Z(m)^{-1} \chi \in \zideal$.
	\end{proof}
	Let $\mathcal{C}$ be a regular category equipped with a full subcategory $\zeros$ satisfying the equivalent conditions of Proposition \ref{contesto generale}. Since the class $\mathcal{N}_{\zeros}$ of morphisms that factor through $\zeros$ is an ideal, we can consider kernels and cokernels with respect to it, as in \cite{Ehresmann2}:
	
	\begin{defi}
		Let $\mathcal{C}$ be a category with a fixed class $\zeros$ of zero objects. Let $f \colon A \to B$ be a morphism in $\mathcal{C}$. We say that a morphism $k \colon K \to A$ in $\mathcal{C}$ is
		a \emph{$\zeros$-kernel} of $f$ if the following properties are satisfied:
		\begin{itemize}
			\item[(a)] $fk \in \mathcal{N}_{\zeros}$;
			\item[(b)] whenever $e: E \to A$ is a morphism in $\mathcal{C}$ and $fe \in \mathcal{N}_{\zeros}$, then there exists a unique morphism $\varphi \colon E \to K$ in $\mathcal{C}$ such that $k \varphi = e$.
		\end{itemize}
	\end{defi}
	The definition of a $\zeros$-cokernel is dual. $\zeros$-kernels and $\zeros$-cokernels, when they exist, are unique up to isomorphisms. We will denote the $\zeros$-kernel and the $\zeros$-cokernel of a morphism $f$ by $\Zker(f)$ and $\Zcoker(f)$, respectively.
	
	\begin{defi}[\cite{Facchini1}]
		Let $\mathcal{C}$ be a category with a fixed class $\zeros$ of zero objects. Let $f \colon A \rightarrow B$ and $g \colon B \rightarrow C$ be morphisms in $\mathcal{C}$. We say that 
		\[\begin{tikzcd}
			A & B & C
			\arrow["f", from=1-1, to=1-2]
			\arrow["g", from=1-2, to=1-3]
		\end{tikzcd}\]
		is a \emph{short $\zeros$-exact sequence} (or simply short exact sequence, when there is no ambiguity) in $\mathcal{C}$ if $f$ is a $\zeros$-kernel of $g$ and $g$ is a $\zeros$-cokernel of $f$.
	\end{defi}
	
	Observe that this definition of exact sequence is different from the one we considered in \cite{Cappelletti}, where $g$ is only required to be a regular epimorphism. \\
	
	As shown in \cite{Cappelletti}, in any regular category $\mathcal{C}$ (in fact, it is sufficient for $\mathcal{C}$ to have pullbacks) with a fixed class of zero objects, every morphism $f$ admits a $\zeros$-kernel (obtained as the pullback of the inclusion of the zero part of the codomain of $f$ along $f$):
	
	\begin{prop}[\cite{Cappelletti}, Proposition 4.2]
		Let $\mathcal{C}$ be a regular category with a fixed class $\zeros$ of zero objects. For every arrow $f \colon A \to B$ in $\mathcal{C}$ the $\zeros$-kernel $k \colon K \to A$ of $f$ exists and it is given by the pullback
		\[\begin{tikzcd}
			K & {\Z(B)} \\
			A & B.
			\arrow["\chi", from=1-1, to=1-2]
			\arrow["k"', tail, from=1-1, to=2-1]
			\arrow["\lrcorner"{anchor=center, pos=0}, draw=none, from=1-1, to=2-2]
			\arrow["{\varepsilon_B}", tail, from=1-2, to=2-2]
			\arrow["f"', from=2-1, to=2-2]
		\end{tikzcd}\]
	\end{prop}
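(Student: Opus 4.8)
The plan is to verify directly that the stated pullback $k \colon K \to A$ satisfies the two defining properties of a $\zeros$-kernel of $f$, using only the existence of pullbacks together with conditions (b)(i) and (b)(iii) of Proposition \ref{contesto generale}; regularity is not actually needed for this particular statement, which matches the parenthetical remark above.

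First I would record the structural observation that, since $\varepsilon_B \colon \Z(B) \rightarrowtail B$ is a monomorphism by (b)(i), its pullback $k \colon K \to A$ along $f$ is again a monomorphism. Property (a) of the definition of $\zeros$-kernel is then immediate: commutativity of the pullback square gives $fk = \varepsilon_B \chi$, which factors through the object $\Z(B) \in \zeros$, so $fk \in \mathcal{N}_{\zeros}$.

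For property (b), suppose $e \colon E \to A$ satisfies $fe \in \mathcal{N}_{\zeros}$, say $fe = z g$ with $g \colon E \to Z$, $z \colon Z \to B$ and $Z \in \zeros$. Applying condition (b)(iii) to the pair $z \colon Z \to B$ and $\varepsilon_B \colon \Z(B) \rightarrowtail B$ yields a (unique) morphism $\psi \colon Z \to \Z(B)$ with $\varepsilon_B \psi = z$; hence, putting $h = \psi g \colon E \to \Z(B)$, we obtain $\varepsilon_B h = z g = fe$. Thus $(e, h)$ is a cone over the cospan $(f, \varepsilon_B)$, and the universal property of the pullback produces a morphism $\varphi \colon E \to K$ with $k \varphi = e$ (and $\chi \varphi = h$). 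Uniqueness of such a $\varphi$ then follows at once from $k$ being a monomorphism: if $k\varphi = e = k\varphi'$, then $\varphi = \varphi'$.

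The only point that requires a little care — and the step I would flag as the conceptual core of the argument — is the passage from an arbitrary factorization of $fe$ through \emph{some} zero object $Z$ to a factorization through the \emph{canonical} monomorphism $\varepsilon_B$ picking out the zero part of $B$; this is exactly the role of condition (b)(iii), and it is what guarantees that the object $K$, built from $\Z(B)$ alone, is large enough to receive every test morphism $e$. Everything else is a routine diagram chase.
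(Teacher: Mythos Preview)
Your argument is correct. The paper does not actually supply a proof of this proposition --- it is quoted from \cite{Cappelletti}, Proposition~4.2, and only the statement is reproduced here --- so there is nothing in the present paper to compare against. That said, your proof is exactly the expected one: it is a direct verification of the two clauses in the definition of $\zeros$-kernel via the pullback universal property, with condition~(b)(iii) of Proposition~\ref{contesto generale} doing the work of funnelling an arbitrary $\zeros$-factorization through the canonical one $\varepsilon_B$. Your observation that only pullbacks are needed (not full regularity) is also correct and is explicitly acknowledged in the paper's preamble to the proposition.
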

	
	Usually, we denote by $K[f]$ the domain of $k$.
	
	\begin{lem}\label{ker e mono}
		Let $\mathcal{C}$ be a category with a fixed class $\zeros$ of zero objects. Consider an arrow $f \colon A \rightarrow B$ and a monomorphism $m \colon B \rightarrowtail C$. Suppose that $\Zker(f)$ exists. Then, $\Zker(mf)$ exists and $\Zker(f) \cong \Zker(mf)$.
	\end{lem}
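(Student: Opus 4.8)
The plan is to show that the very morphism $k \colon K \to A$ that is a $\zeros$-kernel of $f$ is also a $\zeros$-kernel of $mf$; since $\zeros$-kernels are unique up to isomorphism, this will immediately give $\Zker(f) \cong \Zker(mf)$ (and in particular the existence of the latter).

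First I would check condition (a) in the definition of $\zeros$-kernel for the pair $(mf, k)$: one has $(mf)k = m(fk)$, and since $fk \in \zideal$ by the assumption that $k = \Zker(f)$, and $\zideal$ is an ideal of morphisms (hence closed under postcomposition with $m$), it follows that $(mf)k \in \zideal$. Next I would verify the universal property (b). Let $e \colon E \to A$ be any morphism with $(mf)e \in \zideal$. Rewriting this as $m(fe) \in \zideal$ and applying Lemma \ref{mono e triviali} to the monomorphism $m$, we obtain $fe \in \zideal$. The universal property of $k = \Zker(f)$ then yields a unique $\varphi \colon E \to K$ with $k\varphi = e$; this is exactly the factorization demanded in (b) for $mf$, and its uniqueness is literally the uniqueness coming from $\Zker(f)$. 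Hence $k$ is a $\zeros$-kernel of $mf$, and the conclusion follows.

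The only nontrivial ingredient is the ability to cancel the monomorphism $m$ when testing membership in $\zideal$, i.e.\ the implication "$m(fe) \in \zideal \Rightarrow fe \in \zideal$"; this is precisely Lemma \ref{mono e triviali}, which has already been established. Everything else is a routine unwinding of the definitions, so I do not anticipate a genuine obstacle.
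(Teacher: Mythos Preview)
Your proof is correct. It differs from the paper's argument in execution, though not in spirit. The paper proceeds diagrammatically: it uses that $\Z(m)$ is an isomorphism to exhibit the naturality square for $m$ as a pullback, and then pastes this with the pullback square characterizing $k = \Zker(f)$ to conclude that $k$ is also the pullback of $\varepsilon_C$ along $mf$, hence $\Zker(mf)$. Your argument instead works directly with the universal property and invokes Lemma~\ref{mono e triviali} to cancel $m$ from the triviality condition $m(fe) \in \zideal$. Both approaches ultimately rest on the same fact---that monomorphisms reflect membership in $\zideal$---but your route avoids the pullback characterization entirely and is arguably more self-contained at this point in the exposition; the paper's version, on the other hand, foreshadows the systematic use of pullback squares for $\zeros$-kernels that pervades the rest of the paper.
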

	
	\begin{proof}
		Since $m$ is a monomorphism, $\Z(m)$ is an isomorphism. Let $k$ denote the $\zeros$-kernel of $f$. Consider the commutative diagram
		\[\begin{tikzcd}
			{K[f]} & {\Z(B)} & {\Z(C)} \\
			A & B & C,
			\arrow["\chi", from=1-1, to=1-2]
			\arrow["k"', from=1-1, to=2-1]
			\arrow["\lrcorner"{anchor=center, pos=0}, draw=none, from=1-1, to=2-2]
			\arrow["{\Z(m)}", from=1-2, to=1-3]
			\arrow["\sim"', from=1-2, to=1-3]
			\arrow["{\varepsilon_B}"', from=1-2, to=2-2]
			\arrow["\lrcorner"{anchor=center, pos=0}, draw=none, from=1-2, to=2-3]
			\arrow["{\varepsilon_C}", from=1-3, to=2-3]
			\arrow["f"', from=2-1, to=2-2]
			\arrow["m"', tail, from=2-2, to=2-3]
		\end{tikzcd}\]
		where the square on the left is a pullback since $k$ is the $\zeros$-kernel of $f$, while the square on the right is a pullback since $\Z(m)$ is an isomorphism and $m$ is a monomorphism. Hence the whole rectangle is a pullback, and so $k=\Zker(mf)$.
	\end{proof}
	
	The situation concerning $\zeros$-cokernels is different: nothing guarantees their existence. For example, in the category $\mathbf{Ring}$ of unitary rings, with the zero class given by the quotients of the ring $\mathbb{Z}$ of integers, the identity on $\mathbb{Z} \times \mathbb{Z}$ does not have a $\zeros$-cokernel (see \cite{Cappelletti} for more details). However, when the $\zeros$-cokernel of a morphism $f$ does exist, it is given by the pushout, along $f$, of an appropriate morphism with the same domain as $f$ and codomain a zero object:
	
	\begin{prop}\label{cokernelpushout}
		Let $\mathcal{C}$ be a category with a fixed class $\zeros$ of zero objects. Suppose the $\zeros$-cokernel $p \colon B \rightarrow P$ of $f \colon A \rightarrow B$ exists. Then, the commutative square
		\[\begin{tikzcd}
			A & B \\
			Z & P,
			\arrow["f", from=1-1, to=1-2]
			\arrow["\chi"', from=1-1, to=2-1]
			\arrow["p", from=1-2, to=2-2]
			\arrow["z"', from=2-1, to=2-2]
		\end{tikzcd}\]
		is a pushout (where $Z \in \zeros$ and the factorization $z \chi$ of $pf$ exists since $pf \in \zideal$).
	\end{prop}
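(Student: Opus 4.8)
The plan is to verify the universal property of the pushout directly. The square commutes by hypothesis (we are given $z\chi = pf$), so it remains to establish the mediating-morphism property. Concretely, I would fix an arbitrary object $W$ together with morphisms $g \colon B \to W$ and $h \colon Z \to W$ such that $gf = h\chi$, and produce a unique $u \colon P \to W$ with $up = g$ and $uz = h$.

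For the existence of $u$: since $h$ factors through the zero object $Z$, the composite $gf = h\chi$ belongs to $\zideal$. As $p = \Zcoker(f)$, the defining universal property of the $\zeros$-cokernel (the dual of condition (b) in the definition of $\zeros$-kernel) provides a unique morphism $u \colon P \to W$ with $up = g$; this is the natural candidate for the mediating morphism.

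For the second compatibility $uz = h$: one cannot simply take the identity $uz\chi = upf = gf = h\chi$ and cancel $\chi$, since $\chi$ need not be an epimorphism — this is precisely the spot where a naive argument would stall. Instead I would appeal to condition (b)(ii) of Proposition \ref{contesto generale}: because $Z \in \zeros$, the set $\Hom(Z, W)$ has at most one element, and both $uz$ and $h$ belong to it, so $uz = h$ automatically. Uniqueness of the mediating morphism is then immediate: any $u' \colon P \to W$ with $u'p = g$ coincides with $u$ by the uniqueness clause in the universal property of $\Zcoker(f)$.

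Thus the only point worth flagging is that the equality $uz = h$ comes not from cancellation but from the "essentially unique morphisms out of zero objects" axiom; the rest is a formal consequence of the $\zeros$-cokernel universal property, and no regularity or further hypotheses on $\mathcal{C}$ are needed.
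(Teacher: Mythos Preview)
Your argument is correct and follows essentially the same route as the paper: the paper also obtains the mediating morphism from the universal property of the $\zeros$-cokernel (since $gf \in \zideal$) and then invokes condition (ii) of Proposition~\ref{contesto generale} to force $uz = h$. Your extra commentary explaining why one cannot simply cancel $\chi$ is helpful, but the underlying proof is the same.
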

	
	\begin{proof}
		Consider $q \colon B \rightarrow Q$ and $x \colon Z \rightarrow Q$ such that $qf=x \chi$. Since $qf \in \zideal$, there exists a unique $\theta$ such that $\theta p = q$. Moreover, condition (ii) of Proposition \ref{contesto generale} implies $\theta z = x$ i.e.\ the square above is a pushout.
	\end{proof}
	
	Since every arrow with codomain in $\zeros$ is a strong epimorphism (as proved in \cite[Proposition 2.1]{Cappelletti}, it follows that every $\zeros$-cokernel is a strong epimorphism (as a pushout of a strong epimorphism).
	
	\begin{prop}
		Let $\mathcal{C}$ be a category with a fixed class $\zeros$ of zero objects. Consider an arrow $f \colon A \rightarrow B$ and suppose that $k=\Zker(f)$ and $q=\Zcoker(k)$ exist. Then, $k$ is the $\zeros$-kernel of $q$.
	\end{prop}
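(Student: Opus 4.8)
The plan is to exploit the fact that $f$ itself factors through its $\zeros$-cokernel $q$. Since $k = \Zker(f)$, the defining property of a $\zeros$-kernel gives $fk \in \zideal$; hence the universal property of $q = \Zcoker(k)$ produces a unique morphism $\theta \colon Q \to B$ with $\theta q = f$. (Equivalently, this factorization can be read off the pushout square of Proposition \ref{cokernelpushout}.) This is the only non-formal step, and it is immediate.

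With $\theta$ at hand I would check the two conditions for $k$ to be the $\zeros$-kernel of $q$. First, $qk \in \zideal$ because $q$ is the $\zeros$-cokernel of $k$. Second, let $e \colon E \to A$ be any morphism with $qe \in \zideal$; since $\zideal$ is an ideal of morphisms, and in particular closed under composition on the left, $fe = \theta q e = \theta(qe) \in \zideal$. The universal property of $k = \Zker(f)$ now yields a unique $\varphi \colon E \to K$ with $k\varphi = e$, which is exactly the existence-and-uniqueness statement required for $k$ to be the $\zeros$-kernel of $q$. Therefore $k = \Zker(q)$.

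I do not anticipate any real obstacle here: the argument is purely formal, relying only on $\zideal$ being an ideal together with the universal properties of $\zeros$-kernels and $\zeros$-cokernels, with no use of regularity of $\mathcal{C}$ nor of any (co)limits beyond the hypothesised $k$ and $q$. The one point deserving a moment's attention is the direction of the implication $qe \in \zideal \Rightarrow fe \in \zideal$: it must be routed through the factorization $f = \theta q$, since there is no reason to expect $q$ to factor through $f$.
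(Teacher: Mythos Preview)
Your proof is correct and follows essentially the same approach as the paper's: both obtain the factorization $f = \theta q$ from $fk \in \zideal$ and the universal property of $q = \Zcoker(k)$, then use it to show that $qe \in \zideal$ implies $fe = \theta(qe) \in \zideal$, whence the universal property of $k = \Zker(f)$ gives the required factorization. The only difference is notational (the paper calls your $\theta$ by $\varphi$ and vice versa).
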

	
	\begin{proof}
		Clearly $qk \in \zideal$, since $q=\Zcoker(k)$. Let $k' \colon K' \rightarrow A$ be an arrow such that $qk' \in \zideal$. We consider the following commutative diagram
		\[\begin{tikzcd}
			K & A & B \\
			{K'} && Q,
			\arrow["k", from=1-1, to=1-2]
			\arrow["f", from=1-2, to=1-3]
			\arrow["q"', from=1-2, to=2-3]
			\arrow["{k'}"', from=2-1, to=1-2]
			\arrow["\varphi"', from=2-3, to=1-3]
		\end{tikzcd}\]
		where $fk \in \zideal$ implies the existence of $\varphi$. Since $fk'=\varphi q k' \in \zideal$, there exists a unique arrow $\theta \colon K' \rightarrow K$ such that $k \theta = k'$.
	\end{proof}
	
	In other words, this last result tells that every $\zeros$-kernel is the $\zeros$-kernel of its $\zeros$-cokernel (if it exists). In a similar way, it can be proved that every $\zeros$-cokernel is the $\zeros$-cokernel of its $\zeros$-kernel.
	
	\begin{lem}\label{coker id}
		Let $\mathcal{C}$ be a category with a fixed class $\zeros$ of zero objects. Suppose that $q \colon X \rightarrow Q$ is the $\zeros$-cokernel of $id_X$. Then, $Q \in \zeros$.
	\end{lem}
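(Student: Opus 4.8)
The plan is to exhibit $Q$, up to isomorphism, as the zero object through which the $\zeros$-cokernel $q$ is forced to factor. Since $q$ is the $\zeros$-cokernel of $\mathrm{id}_X$, part (a) of the definition gives $q = q\circ\mathrm{id}_X\in\zideal$, so I would first fix a factorization $q = z\chi$ with $\chi\colon X\to Z$ and $z\colon Z\to Q$, where $Z\in\zeros$ (such a factorization exists precisely because $q\in\zideal$).

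Next I would feed the map $\chi$ back into the universal property of $q$. The composite $\chi\circ\mathrm{id}_X=\chi$ lies in $\zideal$ because its codomain $Z$ is a zero object, so there is a (unique) $\varphi\colon Q\to Z$ with $\varphi q=\chi$. I then claim $z$ and $\varphi$ are mutually inverse. For $\varphi z=\mathrm{id}_Z$: the morphism $\varphi z$ is an endomorphism of the zero object $Z$, and by condition~(ii) of Proposition~\ref{contesto generale} the set $\Hom(Z,Z)$ has at most one element, hence $\varphi z=\mathrm{id}_Z$. For $z\varphi=\mathrm{id}_Q$: I would apply the \emph{uniqueness} clause of the universal property of $q$ to $q$ itself; since $q\circ\mathrm{id}_X=q\in\zideal$, there is a unique $\psi\colon Q\to Q$ with $\psi q=q$, and both $\mathrm{id}_Q$ and $z\varphi$ qualify (indeed $z\varphi q=z\chi=q$), so $z\varphi=\mathrm{id}_Q$. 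Therefore $z$ is an isomorphism, $Q\cong Z\in\zeros$, and hence $Q\in\zeros$.

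The only delicate point is the identity $z\varphi=\mathrm{id}_Q$: a naive attempt would first try to prove that $q$ is an epimorphism, which is both unnecessary and at risk of circularity — the clean move is to test $q$ against its own universal property. As an alternative route one could invoke Proposition~\ref{cokernelpushout}, which makes the square with sides $\mathrm{id}_X$, $\chi$, $q$, $z$ a pushout; but a pushout of an identity morphism along $\chi$ has $Z$ itself as its vertex, so again $Q\cong Z\in\zeros$. I would present the direct argument as the main proof and record the pushout shortcut as a remark.
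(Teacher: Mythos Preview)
Your argument is correct. The main route you take---factoring $q$ through a zero object and then showing the factoring maps are mutually inverse via the universal property and the posetal condition on $\zeros$---is sound at every step; in particular, your handling of $z\varphi=\mathrm{id}_Q$ by testing $q$ against its own universal property is exactly the right move and avoids any appeal to $q$ being an epimorphism.

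The paper, however, takes precisely the route you relegate to a remark: it invokes Proposition~\ref{cokernelpushout} to see that the square with sides $\mathrm{id}_X$, $\chi$, $q$, $\varepsilon_Q$ is a pushout, and then observes that $\varepsilon_Q$ is an isomorphism because it is the pushout of an identity. So your ``alternative'' is the paper's actual proof. Your direct argument has the virtue of being self-contained---it does not rely on Proposition~\ref{cokernelpushout}---at the cost of a few more lines; the paper's version is a one-liner once that proposition is available. Either is perfectly acceptable here.
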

	
	\begin{proof}
		Applying Proposition \ref{cokernelpushout}, we know that the diagram below is a pushout
		\[\begin{tikzcd}
			X & X \\
			{\Z(Q)} & Q.
			\arrow[equal, from=1-1, to=1-2]
			\arrow["\chi"', from=1-1, to=2-1]
			\arrow["q", from=1-2, to=2-2]
			\arrow["{\varepsilon_Q}"', from=2-1, to=2-2]
			\arrow["\lrcorner"{anchor=center, pos=0, rotate=180}, draw=none, from=2-2, to=1-1]
		\end{tikzcd}\]
		Hence $\varepsilon_Q$ is an isomorphism since it is the pushout of $id_X$.
	\end{proof}
	
	\begin{defi}
		Let $\mathcal{C}$ be a category with a fixed class $\zeros$ of zero objects. An object $X \in \mathcal{C}$ is said to have a \emph{maximum quotient in $\zeros$} if there exists an arrow $x \colon X \rightarrow M(X)$, with $M(X) \in \zeros$, such that for every arrow $z \colon X \rightarrow Z$, with $Z \in \zeros$, there exists a (necessarily unique) arrow $\varphi \colon M(X) \rightarrow Z$ such that $\varphi x = z$.
	\end{defi}
	
	If $X$ has a maximum quotient in $\zeros$, then for every $Z \in \zeros$ there is at most one arrow with domain $X$ and codomain $Z$. In fact, consider $z,z' \colon X \rightarrow Z$; then there exist $\varphi, \varphi' \colon M(X) \rightarrow Z$ such that $\varphi x =z$ and $\varphi ' x =z'$; but, thanks to the properties of the subcategory $\zeros$, we have $\varphi= \varphi'$ and so $z=z'$.
	
	\begin{prop} \label{coker id sse massimo quoziente}
		Let $\mathcal{C}$ be a category with a fixed class $\zeros$ of zero objects. For every object $X \in \mathcal{C}$, the $\zeros$-cokernel of $id_X$ exists if and only if $X$ has a maximum quotient in $\zeros$.
	\end{prop}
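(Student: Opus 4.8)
The plan is to prove the equivalence in both directions, essentially unwinding the definitions and using the characterization of $\zeros$-cokernels as pushouts from Proposition \ref{cokernelpushout}.

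First I would prove the ``only if'' direction. Suppose the $\zeros$-cokernel $q\colon X \to Q$ of $id_X$ exists. By Lemma \ref{coker id} we know $Q \in \zeros$. I claim $q$ exhibits $Q$ as a maximum quotient of $X$ in $\zeros$: given any $z \colon X \to Z$ with $Z \in \zeros$, we have $z \cdot id_X = z \in \zideal$ trivially (it factors through $Z \in \zeros$ via the identity), so the universal property of the $\zeros$-cokernel of $id_X$ yields a unique $\varphi \colon Q \to Z$ with $\varphi q = z$. This is exactly the defining property of a maximum quotient in $\zeros$, so $X$ has one, namely $M(X) = Q$.

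For the ``if'' direction, suppose $X$ has a maximum quotient $x \colon X \to M(X)$ in $\zeros$. I would show that $x$ itself is the $\zeros$-cokernel of $id_X$. First, $x \cdot id_X = x \in \zideal$ since $M(X) \in \zeros$. Now let $g \colon X \to Y$ be any arrow with $g \cdot id_X = g \in \zideal$; then $g$ factors as $g = z \cdot \chi$ for some $\chi \colon X \to Z$ and $z \colon Z \to Y$ with $Z \in \zeros$. Since $\chi \colon X \to Z$ has codomain in $\zeros$, the maximum quotient property gives a (unique) $\psi \colon M(X) \to Z$ with $\psi x = \chi$. Then $\theta := z\psi \colon M(X) \to Y$ satisfies $\theta x = z\psi x = z\chi = g$. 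For uniqueness of $\theta$: since every arrow with codomain in $\zeros$ is a strong, hence regular, epimorphism (as recalled from \cite{Cappelletti}), $x$ is an epimorphism, so any $\theta'$ with $\theta' x = g$ must equal $\theta$. Hence $x = \Zcoker(id_X)$, and in particular the $\zeros$-cokernel of $id_X$ exists.

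I do not expect any serious obstacle here; the statement is a clean reformulation. The only point requiring a little care is the uniqueness of the factorization $\theta$ in the ``if'' direction: one must invoke that arrows with codomain in $\zeros$ are epimorphisms (which follows from their being strong epimorphisms, as recalled just before Proposition \ref{cokernelpushout}), rather than reproving it. Alternatively, uniqueness also follows from condition (ii) of Proposition \ref{contesto generale} together with the fact that $M(X) \in \zeros$, since then there is at most one arrow $M(X) \to Z$ for $Z \in \zeros$ — but one still needs epimorphicity of $x$ to handle general codomains $Y$. It is worth noting the proof gives slightly more: when these equivalent conditions hold, $\Zcoker(id_X) \cong M(X)$.
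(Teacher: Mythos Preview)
Your proof is correct and essentially identical to the paper's: both directions use the same ideas (Lemma \ref{coker id} for $(\Rightarrow)$, and factoring a trivial arrow through a zero object plus epimorphicity of $x$ for $(\Leftarrow)$). One small slip: you write ``strong, hence regular, epimorphism'' --- the implication goes the other way in general; all you need (and all the paper uses) is that strong epimorphisms are epimorphisms.
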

	
	\begin{proof}
		$(\Longrightarrow)$ Suppose $q=\Zcoker(id_X)$; thanks to Lemma \ref{coker id} we know that $q \colon X \rightarrow Z$, with $Z \in \zeros$. We show that $q$ determines a maximum quotient of $X$ in $\zeros$. Consider an arrow $z \colon X \rightarrow Z'$; since $q=\Zcoker(id_X)$, there exists an arrow $\varphi$ such that $\varphi q =z$.\\
		$(\Longleftarrow)$ Let $x \colon X \rightarrow M(X)$ be a maximum quotient of $X$ in $\zeros$. We show that $x=\Zcoker(id_X)$. In fact, consider an arrow $f \colon X \rightarrow Y$ such that $fid_X=f \in \zideal$. Hence, $f=\varepsilon_Y \chi$ where $\chi \colon X \rightarrow \Z(Y)$ and $\varepsilon_Y \colon \Z(Y) \rightarrowtail Y$. Therefore, thanks to the property of the maximum quotient in $\zeros$, there exists an arrow $\varphi$ such that $\varphi x= \chi$, and so $\varepsilon_Y \varphi x = f$. The uniqueness in guaranteed by the fact that $x$ is an epimorphism.
	\end{proof}
	
	\begin{prop}\label{coker e max quotient}
		Let $\mathcal{C}$ be a category with pushouts and a fixed class $\zeros$ of zero objects. Given an object $X \in \mathcal{C}$, every arrow $f \colon X \rightarrow Y$ admits a $\zeros$-cokernel if and only if $X$ has a maximum quotient in $\zeros$.
	\end{prop}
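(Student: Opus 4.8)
The plan is to prove the two implications separately. The implication $(\Rightarrow)$ is immediate: if every arrow with domain $X$ admits a $\zeros$-cokernel, then $id_X$ in particular does, and Proposition \ref{coker id sse massimo quoziente} gives at once that $X$ has a maximum quotient in $\zeros$.

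For $(\Leftarrow)$, fix a maximum quotient $x \colon X \to M(X)$ of $X$ in $\zeros$ (so $M(X) \in \zeros$) and an arbitrary arrow $f \colon X \to Y$; I would exhibit $\Zcoker(f)$ as the pushout of $f$ along $x$. The key preliminary step is to rephrase the condition defining a $\zeros$-cokernel as a cocone condition: for every $e \colon Y \to E$ one has $ef \in \zideal$ if and only if $ef$ factors through $x$, and in that case the factorizing morphism $\alpha \colon M(X) \to E$ with $\alpha x = ef$ is unique. Indeed, if $ef = \alpha x$ then $ef$ factors through $M(X) \in \zeros$; conversely, if $ef \in \zideal$, write $ef = v u$ with $u \colon X \to Z$, $v \colon Z \to E$ and $Z \in \zeros$, and factor $u = \psi x$ through the maximum quotient, so that $ef = (v\psi)x$; uniqueness of $\alpha$ holds because $x$, having codomain in $\zeros$, is a strong (in particular, an) epimorphism. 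It follows that, for each $E$, the cocones on the span $M(X) \xleftarrow{x} X \xrightarrow{f} Y$ with vertex $E$ correspond bijectively, naturally in $E$, to the morphisms $e \colon Y \to E$ with $ef \in \zideal$. Granting that the pushout $Y +_X M(X)$ exists — the delicate point, addressed below — write its square with legs $q \colon Y \to Q$ and $\bar x \colon M(X) \to Q$, so that $qf = \bar x x$; then $q$ is the $\zeros$-cokernel of $f$. Indeed the required factorization property holds since $qf = \bar x x$ factors through $M(X) \in \zeros$, while any $e \colon Y \to E$ with $ef \in \zideal$ determines a cocone and hence factors as $e = \varphi q$ for the induced $\varphi \colon Q \to E$; and such $\varphi$ is the only one with $\varphi q = e$, since any two of them agree also after composition with $\bar x$ — as $\varphi \bar x x = \varphi q f = ef$ and $x$ is epic — hence coincide by the uniqueness clause of the pushout.

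The step I expect to be the main obstacle is thus the existence of the pushout $Y +_X M(X)$, which is not formal for an arbitrary category carrying a class of zero objects. By the correspondence above, the universal property demanded of $\Zcoker(f)$ is precisely that of this pushout, so the substance of the implication lies in guaranteeing that the pushout exists whenever $X$ admits a maximum quotient. I would address this either by constructing it explicitly — exploiting that $M(X) \in \zeros$ forces the $M(X)$-leg of every cocone to be uniquely determined, so that one is really representing the functor $E \mapsto \{\, e \colon Y \to E \mid ef \in \zideal \,\}$ — or, in the cocomplete settings in which the paper's examples live, by simply invoking cocompleteness, whereupon the verification above concludes the argument.
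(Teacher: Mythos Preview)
Your approach coincides with the paper's: both directions are handled identically, with $(\Rightarrow)$ reduced to Proposition~\ref{coker id sse massimo quoziente} and $(\Leftarrow)$ obtained by realizing $\Zcoker(f)$ as the pushout of the maximum quotient $x \colon X \to M(X)$ along $f$. Your verification that this pushout has the universal property of the $\zeros$-cokernel is slightly more explicit than the paper's (you spell out the bijection between cocones on the span and morphisms $e$ with $ef \in \zideal$), but the argument is the same.

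You are right to flag the existence of the pushout as the delicate point, and in fact the paper does not address it either: it simply writes ``$q$, defined as the pushout of $x$ \dots\ along $f$'' and proceeds. As you observe, your bijection shows that the pushout and the $\zeros$-cokernel represent the same functor, so one exists if and only if the other does; hence the existence of the pushout cannot be derived from the existence of the maximum quotient alone, and the proposition as stated (for an arbitrary category with a class of zero objects) tacitly assumes that these particular pushouts exist. In the paper's intended applications this is harmless --- all the examples live in cocomplete categories --- but your diagnosis of the gap is accurate and your suggested remedies (invoke cocompleteness, or restrict the statement accordingly) are exactly what is needed.
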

	
	\begin{proof}
		$(\Longrightarrow)$ It is a direct consequence of Proposition \ref{coker id sse massimo quoziente}.\\
		$(\Longleftarrow)$ We prove that $q$, defined as the pushout of $x$ (where $x \colon X \rightarrow M(X)$ is a maximum quotient of $X$ in $\zeros$) along $f$ is the $\zeros$-cokernel of $f$:
		\[\begin{tikzcd}
			X & Y \\
			{M(X)} & Q.
			\arrow["f", from=1-1, to=1-2]
			\arrow["x"', from=1-1, to=2-1]
			\arrow["q", from=1-2, to=2-2]
			\arrow["m"', from=2-1, to=2-2]
			\arrow["\lrcorner"{anchor=center, pos=0, rotate=180}, draw=none, from=2-2, to=1-1]
		\end{tikzcd}\]
		Consider an arrow $p \colon Y \rightarrow P$ such that $pf \in \zideal$; we know that $pf$ factors as in the commutative diagram below
		\[\begin{tikzcd}
			X & Y & P \\
			& {\Z(P).}
			\arrow["f", from=1-1, to=1-2]
			\arrow["{z_P}"', curve={height=6pt}, from=1-1, to=2-2]
			\arrow["p", from=1-2, to=1-3]
			\arrow["{\varepsilon_P}"', curve={height=6pt}, from=2-2, to=1-3]
		\end{tikzcd}\]
		Since $\Z(P) \in \zeros$, there exists an arrow $\theta \colon M(X) \rightarrow \Z(P)$ such that $\theta x = z_P$. Due to the fact that that $\varepsilon_P \theta x = \varepsilon_P z_P= pf$ and to the universal property of pushouts, there exists an arrow $\varphi \colon Q \rightarrow P$ such that $\varphi q = p$ and $\varphi m = \varepsilon_P \theta$. Finally, $\varphi$ is the unique arrow satisfying $\varphi q=p$ since $q$ is an epimorphism as pushout of the epimorphism $x$.
	\end{proof}
	
	In \cite{Facchini2}, the authors considered a notion of \emph{pretorsion theory} with respect to a fixed class of objects $\zeros$. Specifically, a pair $\torsione$ of full and replete subcategories $\tort$ and $\torf$ of a category $\mathcal{C}$ is a pretorsion theory with respect to the class $\zeros \coloneqq \tort \cap \torf$ if every morphism from an object in $\tort$ to an object in $\torf$ belongs to the ideal $\zideal$ of morphisms that factor through $\zeros$, and for every object $A$ in $\mathcal{C}$ there exists a $\zeros$-exact sequence with a torsion object in $\tort$ as its left endpoint and a torsion-free object in $\torf$ as its right endpoint.
	
	The concept of pretorsion theory can be seen as a generalization of the notion of torsion theory. In fact, when the category $\mathcal{C}$ is pointed, every pretorsion theory such that $\tort \cap \torf$ reduces to the zero object is a torsion theory in the usual sense \cite{BG torsion th, JT torsion th}.
	
	Since in this work we will consider only pretorsion theories with respect to a fixed class $\zeros$, we will refer to such pretorsion theories as $\zeros$-torsion theories (or, if there is no ambiguity, simply torsion theories).
	
	\begin{defi}[\cite{Facchini2}]\label{definizione preesatta}
		Let $\mathcal{C}$ be a category with a fixed class $\zeros$ of zero objects. A \emph{$\zeros$-torsion theory} (or simply torsion theory, if there is no ambiguity) $(\tort, \torf)$ in $\mathcal{C}$ consists of two full, replete subcategories $\tort, \torf$ of $\mathcal{C}$ satisfying the following conditions:
		\begin{itemize}
			\item[(a)] $\zeros = \tort \cap \torf$;
			\item[(b)] $\Hom(T, F) \subseteq \zideal$ for every object $T \in \tort$ , $F \in \torf$;
			\item[(c)] for every object $A$ of $\mathcal{C}$ there is a short $\zeros$-exact sequence
			\[\begin{tikzcd}
				{\T(A)} & A & {\F(A)}
				\arrow["{t_A}", from=1-1, to=1-2]
				\arrow["{\eta_A}", from=1-2, to=1-3]
			\end{tikzcd}\]
			with $\T(A) \in \tort$ and $\F(A) \in \torf$.
		\end{itemize}
	\end{defi}
	
	As proved in \cite[Proposition 3.1]{Facchini2}, such a $\zeros$-exact sequence is unique up to isomorphisms.
	
	In \cite{Facchini2}, the authors prove that, by fixing for each object $A$ a $\zeros$-exact sequence as in Definition \ref{definizione preesatta}, every torsion theory gives rise to a pair of functors:
	\[\begin{tikzcd}[row sep=5pt]
		{\mathcal{C}} && {\torf} && {\mathcal{C}} && {\tort} \\
		A && {\F(A)} && A && {\T(A)} \\
		\\
		\\
		B && {\F(B)} && B && {\T(B),}
		\arrow["\F", from=1-1, to=1-3]
		\arrow["f"', from=2-1, to=5-1]
		\arrow[maps to, from=2-1, to=2-3]
		\arrow["{\F(f)}", from=2-3, to=5-3]
		\arrow[maps to, from=5-1, to=5-3]
		\arrow["\T", from=1-5, to=1-7]
		\arrow["f"', from=2-5, to=5-5]
		\arrow["{\T(f)}", from=2-7, to=5-7]
		\arrow[maps to, from=2-5, to=2-7]
		\arrow[maps to, from=5-5, to=5-7]
	\end{tikzcd}\]
	where $\T(f) \colon \T(A) \rightarrow \T(B)$ is the unique morphism such that $f t_A=t_B \T(f)$ and it exists since $\eta_B f t_A \in \zideal$; in a similar way, $\F(f) \colon \F(A) \rightarrow \F(B)$ is the unique morphism such that $\F(f) \eta_A = \eta_B f$ and it exists since $\eta_B f t_A \in \zideal$:
	\[\begin{tikzcd}
		{\T(A)} & A & {\F(A)} \\
		{\T(B)} & B & {\F(B).}
		\arrow["f", from=1-2, to=2-2]
		\arrow["{\eta_A}", from=1-2, to=1-3]
		\arrow["{\eta_B}"', from=2-2, to=2-3]
		\arrow["{\exists!\F(f)}", from=1-3, to=2-3]
		\arrow["{t_A}", from=1-1, to=1-2]
		\arrow["{t_B}"', from=2-1, to=2-2]
		\arrow["{\exists!\T(f)}"', from=1-1, to=2-1]
	\end{tikzcd}\]
	\begin{prop}[\cite{Facchini2}, Proposition 3.3]
		Let $\mathcal{C}$ be a category with a fixed class $\zeros$ of zero objects, and $(\tort ,\torf)$ a $\zeros$-torsion theory in $\mathcal{C}$. Then:
		\begin{itemize}
			\item[(a)] the functor $\F \colon \mathcal{C} \rightarrow \torf$ is a left inverse left adjoint of the inclusion functor $\i_{\torf} \colon \torf \hookrightarrow \mathcal{C}$ and the unit is given by $\eta$;
			\item[(b)] the functor $\T \colon \mathcal{C} \rightarrow \tort$ is a left inverse right adjoint of the inclusion functor $\i_{\tort} \colon \tort \hookrightarrow \mathcal{C}$ and the counit is given by $t$.
		\end{itemize}
	\end{prop}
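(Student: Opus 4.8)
\emph{Proof proposal.} The plan is to establish the two adjunctions directly from the universal properties of $\zeros$-cokernels and $\zeros$-kernels, using condition~(b) of Definition~\ref{definizione preesatta} as the only ingredient producing the required factorizations, and then to deduce the ``left inverse'' assertions by showing that $\eta_F$ is invertible for $F \in \torf$ and that $t_T$ is invertible for $T \in \tort$. For~(a), fix $A \in \mathcal{C}$, $F \in \torf$ and a morphism $g \colon A \to \i_{\torf}(F)$ of $\mathcal{C}$. Since $\eta_A$ is the $\zeros$-cokernel of $t_A \colon \T(A) \to A$, to factor $g$ through $\eta_A$ it is enough to know that $g t_A \in \zideal$; but $g t_A \colon \T(A) \to F$ has domain in $\tort$ and codomain in $\torf$, hence $g t_A \in \zideal$ by~(b). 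So there is a unique $\bar g \colon \F(A) \to F$ with $\bar g \eta_A = g$, and $\bar g$ lies in $\torf$ since $\torf$ is full. The assignment $g \mapsto \bar g$ is the desired bijection $\Hom_{\mathcal{C}}(A, \i_{\torf} F) \cong \Hom_{\torf}(\F(A), F)$, whose naturality in $A$ and in $F$ follows from the uniqueness in the universal property of the $\zeros$-cokernel together with the definition of $\F$ on arrows; this exhibits $\F \dashv \i_{\torf}$ with unit $\eta$.

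Part~(b) is dual: given $T \in \tort$ and $g \colon \i_{\tort}(T) \to A$, use that $t_A$ is the $\zeros$-kernel of $\eta_A \colon A \to \F(A)$. Since $\eta_A g \colon T \to \F(A)$ again has domain in $\tort$ and codomain in $\torf$, it lies in $\zideal$ by~(b), so $g$ factors uniquely through $t_A$ by a morphism $T \to \T(A)$, which lies in $\tort$ by fullness. This gives the bijection $\Hom_{\mathcal{C}}(\i_{\tort} T, A) \cong \Hom_{\tort}(T, \T(A))$, natural for the same reason, so $\i_{\tort} \dashv \T$ with counit $t$.

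It then remains to see that $\F$ and $\T$ are left inverses of the respective inclusions. If $F \in \torf$, then $t_F \colon \T(F) \to F$ goes from an object of $\tort$ to one of $\torf$, so $t_F \in \zideal$; as $\eta_F = \Zcoker(t_F)$ and $\zideal$ is an ideal, every morphism out of $F$ factors through $\eta_F$, and applying this to $\mathrm{id}_F$ produces a retraction of $\eta_F$. Being also an epimorphism (every $\zeros$-cokernel is one), $\eta_F$ is an isomorphism, so the restriction of $\eta$ to $\torf$ is a natural isomorphism, i.e.\ $\F\i_{\torf} \cong \mathrm{id}_{\torf}$. Dually, if $T \in \tort$, then $\eta_T \colon T \to \F(T)$ lies in $\zideal$ by~(b), and since $t_T = \Zker(\eta_T)$ every morphism into $T$ factors through $t_T$; applying this to $\mathrm{id}_T$ makes $t_T$ a split epimorphism, and since a $\zeros$-kernel is always a monomorphism (by the uniqueness in its universal property), $t_T$ is an isomorphism, so the restriction of $t$ to $\tort$ is a natural isomorphism and $\T\i_{\tort} \cong \mathrm{id}_{\tort}$. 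Conceptually this last part is nothing but the general fact that a fully faithful right, respectively left, adjoint has invertible counit, respectively unit, applied to the inclusions $\i_{\torf}$ and $\i_{\tort}$.

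I do not anticipate a genuine obstacle here: once condition~(b) is available the whole argument is essentially formal. The only points requiring a little attention are the soft categorical facts invoked above — every $\zeros$-cokernel is an epimorphism and every $\zeros$-kernel a monomorphism — and the routine check that the factorizing morphisms really do land in the full subcategories $\torf$ and $\tort$.
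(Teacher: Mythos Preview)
The paper does not give its own proof of this proposition; it is quoted verbatim from \cite{Facchini2}, Proposition~3.3, without argument. Your proof is correct and is the standard one: the adjunctions come immediately from the universal properties of the $\zeros$-cokernel $\eta_A$ and the $\zeros$-kernel $t_A$ once condition~(b) of Definition~\ref{definizione preesatta} guarantees that the composites $g\,t_A$ and $\eta_A\,g$ lie in $\zideal$, and the left-inverse assertions follow by producing a splitting of $\eta_F$ (resp.\ $t_T$) from $\mathrm{id}_F$ (resp.\ $\mathrm{id}_T$) and combining it with the fact that $\zeros$-cokernels are epi and $\zeros$-kernels are mono.
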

	
	As shown in \cite{Facchini2}, the following observations are a direct consequence of the previous proposition:
	\begin{itemize}
		\item[(a)] For every object $Z \in \zeros = \tort \cap \torf$, the chosen short $\zeros$-exact sequence is the sequence $Z \xrightarrow{id_Z} Z \xrightarrow{id_Z} Z$.
		\item[(b)] For every object $T \in \tort$, the chosen short $\zeros$-exact sequence for $T$ is a sequence of the form $T \xrightarrow{id_T} T \xrightarrow{\eta_T} F$ for an object $F \in \torf$.
		\item[(c)] For every $F \in \torf$, the chosen short $\zeros$-exact sequence is a sequence of the form $T \xrightarrow{t_F} F \xrightarrow{id_F} F$ for some $T \in \tort$.
	\end{itemize}
	We observe that in (c), since $T$ is the domain of the $\zeros$-kernel of $id_F$, the square below is a pullback
	\[\begin{tikzcd}
		T & {\Z(F)} \\
		F & F;
		\arrow[from=1-1, to=1-2]
		\arrow[from=1-1, to=2-1]
		\arrow["\lrcorner"{anchor=center, pos=0}, draw=none, from=1-1, to=2-2]
		\arrow["{\varepsilon_F}", from=1-2, to=2-2]
		\arrow[equal, from=2-1, to=2-2]
	\end{tikzcd}\]
	hence $T=\Z(F) \in \zeros$. Moreover, in (b) we are requiring that $\eta_T$ is the $\zeros$-cokernel of $id_T$. So, as proved in Proposition \ref{coker id sse massimo quoziente}, we get that $F \in \zeros$ and $\eta_T$ is a maximum quotient of $T$ in $\zeros$. This simple observation leads to the following:
	
	\begin{cor}\label{dominio torsione}
		Every arrow $f \colon T \rightarrow A$ such that $T \in \tort$ admits a $\zeros$-cokernel.
	\end{cor}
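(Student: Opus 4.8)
The plan is to reduce the statement to the two preceding propositions about maximum quotients in $\zeros$, via the structural observations on the chosen short $\zeros$-exact sequences recalled just above. First I would fix $T \in \tort$ and recall observation (b): the chosen short $\zeros$-exact sequence for $T$ has the form $T \xrightarrow{id_T} T \xrightarrow{\eta_T} F$ with $F \in \torf$. In particular, $\eta_T$ is, by definition of short $\zeros$-exact sequence, the $\zeros$-cokernel of $id_T$. So the $\zeros$-cokernel of $id_T$ exists.

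Next I would invoke Proposition \ref{coker id sse massimo quoziente}, which asserts that $\Zcoker(id_T)$ exists if and only if $T$ has a maximum quotient in $\zeros$. Hence $T$ has a maximum quotient in $\zeros$; concretely, $\eta_T \colon T \to F$ exhibits $F$ as the maximum quotient $M(T)$ (and $F \in \zeros$ by Lemma \ref{coker id}, though this last point is not needed for the conclusion).

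Finally, I would apply Proposition \ref{coker e max quotient}: since $T$ admits a maximum quotient in $\zeros$, every arrow $f \colon T \to A$ admits a $\zeros$-cokernel. This is exactly the claim. I do not expect any genuine obstacle here: the argument is a direct chaining of Proposition \ref{coker id sse massimo quoziente} and Proposition \ref{coker e max quotient}, the only substantive input being the already-established fact that for a torsion object $T$ the arrow $\eta_T$ serves as $\Zcoker(id_T)$. If one wanted to be fully self-contained one could instead observe directly that the pushout of $\eta_T$ along $f$ is the $\zeros$-cokernel of $f$, mirroring the proof of Proposition \ref{coker e max quotient}, but citing that proposition is cleaner.
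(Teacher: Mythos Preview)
Your proposal is correct and follows exactly the same route as the paper: the paper notes that observation (b) makes $\eta_T$ the $\zeros$-cokernel of $id_T$, invokes Proposition \ref{coker id sse massimo quoziente} to deduce that $T$ has a maximum quotient in $\zeros$, and then the corollary follows immediately (via Proposition \ref{coker e max quotient}). There is nothing to add.
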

	
	The corollary just stated will be of crucial importance in the following sections. Indeed, as previously observed, in our context the existence of the $\zeros$-cokernel for a general morphism $f$ is not guaranteed. However, in our proofs, we will often work with morphisms that have a torsion domain and, thanks to the results we have just established, we can ensure the existence of the $\zeros$-cokernel for such morphisms.
	
	\section{Torsion Theories and Factorization Systems} \label{torsion th and fact sys}
	
	First of all, to approach this section, we need to recall the notion of a \emph{protomodular} category. A category $\mathcal{C}$ is \emph{protomodular} \cite{Bourn protomod} if
	\begin{itemize}
		\item[(a)] $\mathcal{C}$ has pullbacks of split epimorphisms along any arrow, and
		\item[(b)] given a commutative diagram
		\[\begin{tikzcd}
			D & C \\
			A & B
			\arrow["g", from=1-1, to=1-2]
			\arrow["q"', shift right, from=1-1, to=2-1]
			\arrow["\lrcorner"{anchor=center, pos=0.125}, draw=none, from=1-1, to=2-2]
			\arrow["p"', shift right, from=1-2, to=2-2]
			\arrow["t"', shift right, from=2-1, to=1-1]
			\arrow["f"', from=2-1, to=2-2]
			\arrow["s"', shift right, from=2-2, to=1-2]
		\end{tikzcd}\]
		where $p,q$ are split epimorphisms with respective sections $s,t$ and the downward directed square is a pullback, the pair $(g,s)$ is jointly extremal epimorphic.
	\end{itemize}
	
	This definition, when $\mathcal{C}$ is a pointed category with pullbacks of split epimorphisms along any map, is equivalent to requiring the validity of the split short five lemma.\\
	
	The purpose of this section is to generalize, to the non-pointed case, some of the results obtained in \cite{Gran cond N} and \cite{Gran torsione} concerning the relationship between torsion theories and factorization systems. To begin, let us recall the following:
	
	\begin{defi}
		A \emph{factorization system} for a category $\mathcal{C}$ is a pair of classes of arrows $(\facte, \factm)$ such that:
		\begin{itemize}
			\item[(a)] for every $e \in \facte$ and $m \in \factm$ one has $e \downarrow m$, i.e.\ for every commutative square in $\mathcal{C}$
			\[\begin{tikzcd}
				A & B \\
				C & D
				\arrow["{e \in \facte}", from=1-1, to=1-2]
				\arrow["{m \in \factm}"', from=2-1, to=2-2]
				\arrow["g"', from=1-1, to=2-1]
				\arrow["h", from=1-2, to=2-2]
				\arrow["{d}"', from=1-2, to=2-1]
			\end{tikzcd}\]
			there exists a unique arrow $d \colon B \rightarrow C$ such that $de=g$ and $md=h$;
			\item[(b)] every arrow $f$ in $\mathcal{C}$ factors as $f=me$, where $m \in \factm$ and $e \in \facte$;
			\item[(c)] the classes $\facte$ and $\factm$ (seen as full subcategories of $\frecce$) are replete.
		\end{itemize}
		A factorization system $(\facte, \factm)$ is \emph{stable with respect to a class of arrows} $\mathcal{A}$ if the pullback of every arrow of $\facte$ along an arrow of $\mathcal{A}$ is an arrow of $\facte$, too. If $\mathcal{A}=\frecce$, we say that $(\facte, \factm)$ is \emph{stable}.
	\end{defi}
	
	It is a well-known fact that the conditions (a), (b), and (c) introduced in the definition above are equivalent to requiring that the classes $\facte$ and $\factm$ are closed under composition, contain all isomorphisms, and that every arrow $f$ of $\mathcal{C}$ admits a $(\facte, \factm)$-factorization which is unique up to a unique isomorphism.\\
	
	Let $\mathcal{C}$ be a category with a fixed class $\zeros$ of zero objects. Starting from a torsion theory $\torsione$ in $\mathcal{C}$, we define the classes of arrows
	\[ \facte \coloneqq \{ e \in \frecce \, | \, e \text{ is a $\zeros$-cokernel, } K[e] \in \tort \} \]
	and
	\[ \factm \coloneqq \{ m \in \frecce | K[m] \in \torf \}. \]
	
	\begin{prop}
		For every $e \in \facte$ and $m \in \factm$, one has $e \downarrow m$.
	\end{prop}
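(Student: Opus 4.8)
The plan is to construct the diagonal directly from the universal properties of $\zeros$-kernels and $\zeros$-cokernels, using the torsion hypothesis $\Hom(\tort,\torf)\subseteq\zideal$ to bridge the two. Fix a commutative square with horizontal arrows $e\colon A\to B$ in $\facte$ and $m\colon C\to D$ in $\factm$, and with vertical arrows $g\colon A\to C$, $h\colon B\to D$, so that $mg=he$. Write $k\colon K[e]\to A$ for the $\zeros$-kernel of $e$ and $\ell\colon K[m]\to C$ for the $\zeros$-kernel of $m$; by definition of $\facte$ and $\factm$ we have $K[e]\in\tort$ and $K[m]\in\torf$. Since $e$ is a $\zeros$-cokernel, it is the $\zeros$-cokernel of its own $\zeros$-kernel, that is, $e=\Zcoker(k)$.

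The first step is to show that $gk\in\zideal$. Indeed $ek\in\zideal$ because $k=\Zker(e)$, hence $mgk=hek\in\zideal$; by the universal property of $\ell=\Zker(m)$ there is a unique morphism $\psi\colon K[e]\to K[m]$ with $\ell\psi=gk$. Now $\psi$ has domain in $\tort$ and codomain in $\torf$, so condition (b) of Definition~\ref{definizione preesatta} gives $\psi\in\zideal$; since $\zideal$ is an ideal of morphisms, $gk=\ell\psi\in\zideal$ as well.

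The second step produces the diagonal. Because $e=\Zcoker(k)$ and $gk\in\zideal$, the universal property of the $\zeros$-cokernel yields a unique $d\colon B\to C$ with $de=g$. To see that $md=h$, note that $mde=mg=he$ and cancel $e$ on the right, which is legitimate since every $\zeros$-cokernel is an epimorphism. Finally, any arrow $d'\colon B\to C$ solving the lifting problem satisfies $d'e=g$, hence coincides with $d$ by the uniqueness clause already invoked; thus $d$ is the unique diagonal filler, i.e.\ $e\downarrow m$.

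The one point requiring genuine care — and the step I expect to carry the weight of the argument — is the passage from $mgk\in\zideal$ to $gk\in\zideal$. Since $m$ need not be a monomorphism, Lemma~\ref{mono e triviali} is unavailable, and one must instead factor $gk$ through the torsion-free domain of $\Zker(m)$ and exploit the orthogonality-type condition $\Hom(\tort,\torf)\subseteq\zideal$. Everything else is a routine chase through the universal properties recalled in Section~\ref{preliminaries}.
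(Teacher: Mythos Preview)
Your proof is correct and follows essentially the same route as the paper's: factor $gk$ through $\Zker(m)$ to obtain a morphism $K[e]\to K[m]$ from a torsion object to a torsion-free one, conclude it lies in $\zideal$, and then invoke the universal property of $e=\Zcoker(k)$ together with the fact that $e$ is an epimorphism. Your closing remark correctly identifies the one substantive step, and it is exactly the point the paper's argument turns on as well.
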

	
	\begin{proof}
		Consider the commutative square on the left, where the top horizontal arrow $e$ belongs to $\facte$ and the bottom horizontal arrow $m$ belongs to $\factm$. Then, define $k(e) \coloneqq \Zker(e)$ and $k(m) \coloneqq \Zker (m)$
		\[\begin{tikzcd}
			{K[e]} & A & B \\
			{K[m]} & C & D.
			\arrow["{k(e)}", from=1-1, to=1-2]
			\arrow["{\exists!\varphi}"', from=1-1, to=2-1]
			\arrow["{e\in \facte}", from=1-2, to=1-3]
			\arrow["g"', from=1-2, to=2-2]
			\arrow["h", from=1-3, to=2-3]
			\arrow["{k(m)}"', from=2-1, to=2-2]
			\arrow["{m\in \factm}"', from=2-2, to=2-3]
		\end{tikzcd}\]
		Since $mgk(e)=hek(e) \in \zideal$, there exists a unique $\varphi$ satisfying $gk(e)=k(m) \varphi$. Observe that the domain of $\varphi$ is in $\tort$ and its codomain is in $\torf$, therefore $\varphi \in \zideal$. Now, $gk(e)=k(m)\varphi \in \zideal$ and so there exists a unique $d$ such that $de=g$ (because every $\zeros$-cokernel is the $\zeros$-cokernel of its $\zeros$-kernel). Observing that $e$ is an epimorphism, we get also $md=h$.
	\end{proof}
	
	To guarantee the desired results, we need two additional properties on the torsion theories. The first one is a generalization of a property already known in the pointed case and introduced in \cite{Gran cond N}. The second one is required to ensure the proper behavior of exact sequences in the non-pointed context.
	
	\begin{defi}
		Let $\mathcal{C}$ be a category with a fixed class $\zeros$ of zero objects. A torsion theory $\torsione$ in $\mathcal{C}$ satisfies \emph{Condition (N)} if, for every diagram
		\[\begin{tikzcd}
			{\T(K[f])} & {K[f]} & A & B
			\arrow["f", from=1-3, to=1-4]
			\arrow["k", from=1-2, to=1-3]
			\arrow["{t_{K[f]}}", from=1-1, to=1-2]
		\end{tikzcd}\]
		where $k = \Zker(f)$, then $kt_{K[f]}$ is the $\zeros$-kernel of some arrow.
	\end{defi}
	
	It is known that, in the pointed context, Condition (N) is equivalent to the requirement that every torsion object is characteristic in the sense of \cite{CM char}. Indeed, one of the possible ways of defining characteristic subobjects is the following: a subobject $c \colon C \rightarrowtail B$ is characteristic if, for every normal subobject $n \colon B \rightarrowtail A$, the composite $nc$ is normal. Replacing normal subobjects with $\zeros$-kernels, one can express the condition for a subobject to be characteristic also in our non-pointed setting, allowing to state Condition (N) in terms of characteristic subobjects. The exploration of the notion of characteristic subobject in a non-pointed setting is material for future work.
	
	\begin{defi}
		Let $\mathcal{C}$ be a category with a fixed class $\zeros$ of zero objects. A torsion theory $\torsione$ in $\mathcal{C}$ satisfies \emph{Condition (M)} if, for every short exact sequence
		\[\begin{tikzcd}
			T & A & Q,
			\arrow["k", from=1-1, to=1-2]
			\arrow["q", from=1-2, to=1-3]
		\end{tikzcd}\]
		where $T \in \tort$, $\F(T) \cong \Z(Q)$.
	\end{defi}
	
	\begin{prop}\label{Da torsione a sistema}
		Let $\mathcal{C}$ be a regular protomodular category with a fixed class $\zeros$ of zero objects. Consider a torsion theory $\torsione$ in $\mathcal{C}$ satisfying Conditions (N) and (M). Then, the classes of arrows $(\facte, \factm)$ defined above form a factorization system which is stable with respect to the class of arrows inverted by the functor $\Z$.
	\end{prop}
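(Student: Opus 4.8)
The plan is to verify the reformulation recalled after the definition of factorization system: that $\facte$ and $\factm$ are closed under composition, contain every isomorphism, and that each arrow admits an $(\facte,\factm)$-factorization, unique up to a unique isomorphism. Orthogonality $e\downarrow m$ for $e\in\facte$, $m\in\factm$ is the content of the preceding proposition, and repleteness of both classes is immediate ($\tort,\torf$ are replete, $\zeros$-cokernels are invariant under isomorphism, and so is $\Zker$). Granting the existence of factorizations, the orthogonality gives their uniqueness by the usual diagonal argument and also forces $\facte={}^\perp\factm$, $\factm=\facte^\perp$: if $g\in{}^\perp\factm$ and $g=me$ with $e\in\facte$, $m\in\factm$, then lifting in the square with top $e$, left $g$, right $m$, bottom $\mathrm{id}$ produces $d$ with $dg=e$ and $md=\mathrm{id}$, whence $dm=\mathrm{id}$ because $e$ is a strong epimorphism; so $m$ is an isomorphism and $g\in\facte$, and symmetrically for $\factm$. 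Thus closure under composition and the presence of all isomorphisms are automatic, and what is left is to build factorizations and to prove stability.

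Given $f\colon A\to B$, let $k=\Zker(f)$, let $t_{K[f]}\colon\T(K[f])\rightarrowtail K[f]$ be the torsion subobject, and set $k(e):=k\,t_{K[f]}$. By Condition (N), $k(e)$ is a $\zeros$-kernel; as its domain lies in $\tort$, Corollary \ref{dominio torsione} gives $e:=\Zcoker(k(e))\colon A\to I$. Since a $\zeros$-kernel is the $\zeros$-kernel of its $\zeros$-cokernel, $k(e)=\Zker(e)$, so $K[e]=\T(K[f])\in\tort$ and $e\in\facte$. From $fk(e)=(fk)\,t_{K[f]}\in\zideal$ one obtains the unique $m\colon I\to B$ with $me=f$, and it remains to show $K[m]\in\torf$. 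Pasting the pullback of $\varepsilon_B$ along $m$ with the pullback along $e$ shows $K[f]=A\times_I K[m]$, so there is a regular epimorphism $\bar e\colon K[f]\twoheadrightarrow K[m]$ (a pullback of $e$) with $ek=k(m)\bar e$, where $k(m)=\Zker(m)$. A brief chase — $k(m)\bar e\,t_{K[f]}=e\,k(e)\in\zideal$ with $k(m)$ monic gives $\bar e\,t_{K[f]}\in\zideal$ (Lemma \ref{mono e triviali}), and $\bar e x\in\zideal$ implies $kx$ factors through $\Zker(e)=k(e)$, hence $x$ through $t_{K[f]}$ — yields $\Zker(\bar e)=t_{K[f]}=\Zker(\eta_{K[f]})$. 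Therefore $\bar e=u\,\eta_{K[f]}$ for a regular epimorphism $u\colon\F(K[f])\twoheadrightarrow K[m]$, and since $\eta_{K[f]}$ is a regular epimorphism the identity $\Zker(u\,\eta_{K[f]})=\Zker(\eta_{K[f]})$ forces $\Zker(u)=\varepsilon_{\F(K[f])}$.

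The crux is that $u$ is an isomorphism. Applying Condition (M) to the short exact sequences $\T(K[f])\xrightarrow{k(e)}A\xrightarrow{e}I$ and $\T(K[f])\xrightarrow{t_{K[f]}}K[f]\xrightarrow{\eta_{K[f]}}\F(K[f])$ gives $\Z(I)\cong\F(\T(K[f]))\cong\Z(\F(K[f]))$; together with $\Z(K[m])\cong\Z(I)$ (as $k(m)$ is monic) and the fact that any morphism between isomorphic objects of the posetal category $\zeros$ is an isomorphism, this shows $\Z(u)$ is an isomorphism. Let now $(R;r_1,r_2)$ be the kernel pair of $u$, so that $u$ is its coequalizer. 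From $\Zker(u)=\varepsilon_{\F(K[f])}$ and $ur_1=ur_2$ one gets $\Zker(r_1)=\Zker(r_2)=:N\rightarrowtail R$; both restrictions $r_i|_N$ factor through $\varepsilon_{\F(K[f])}$, and since $\Z(u)$ is monic they coincide. Hence the monomorphism $\Eq(r_1,r_2)\rightarrowtail R$ contains the diagonal $\F(K[f])\to R$ (a section of the split epimorphism $r_1$) and contains $N=\Zker(r_1)$; by protomodularity the pair consisting of a section and the $\zeros$-kernel of a split epimorphism is jointly extremal-epimorphic, so $\Eq(r_1,r_2)\rightarrowtail R$ is an isomorphism, $r_1=r_2$, the kernel pair of $u$ is trivial, and $u$ is an isomorphism. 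Thus $K[m]\cong\F(K[f])\in\torf$ and the factorization is established. This step — reconciling Conditions (N) and (M) with protomodularity to control $K[m]$ — is the main obstacle; everything else is formal.

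For the stability statement, let $e\in\facte$ with $k(e)=\Zker(e)\colon K[e]\rightarrowtail A$, $K[e]\in\tort$, and let $a\colon B'\to B$ be inverted by $\Z$. Form the pullback $e'\colon P=B'\times_B A\to B'$; being a pullback of the regular epimorphism $e$ it is a regular epimorphism. Because $\Z(a)$ is an isomorphism, a pullback-pasting computation gives $\Zker(e')\cong\Zker(e)$, so $K[e']\cong K[e]\in\tort$. Finally, using $\Z(a)^{-1}$ to transport the pushout presentation of $e$ from Proposition \ref{cokernelpushout}: the induced lift $\iota\colon K[e]\to P$ of $k(e)$ satisfies $e'\iota\in\zideal$, and a direct check shows that the resulting square is a pushout, so $e'=\Zcoker(\iota)$. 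Consequently $e'\in\facte$, which proves that $(\facte,\factm)$ is stable with respect to the class of arrows inverted by $\Z$.
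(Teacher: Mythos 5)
Your construction of the $(\facte,\factm)$-factorization is correct and runs parallel to the paper's: you build $e=\Zcoker(k\,t_{K[f]})$, induce $m$, and reduce everything to showing that the comparison $u\colon\F(K[f])\to K[m]$ is an isomorphism. Where the paper gets injectivity of this comparison by citing an external lemma ([Cappelletti, Lemma 5.4]) applied to the left-hand pullback square, you inline a self-contained argument: $\Z(u)$ is invertible by Condition (M) plus posetality of $\zeros$, and then the kernel pair of $u$ collapses because its two $\zeros$-kernels agree and the pair (section, $\zeros$-kernel of a split epi) is jointly extremal-epimorphic by protomodularity. That argument is sound (it is essentially a proof of the cited lemma), and the surjectivity half, via pullback-stability of regular epimorphisms, matches the paper.

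The gap is in the stability paragraph. To conclude $e'\in\facte$ you must show $e'=\Zcoker(\iota)$, and you assert that ``a direct check shows that the resulting square is a pushout,'' obtained by ``transporting'' the pushout presentation of $e$ along $\Z(a)^{-1}$. Neither claim holds up: a pushout property does not transfer along a map of squares, and the statement that the pullback square
defining $\Zker(e')$ (a mono $\varepsilon_{B'}$ pulled back along the regular epi $e'$) is also a pushout is \emph{false} in a general regular category (already in $\set$: pull back $\emptyset\rightarrowtail\{\ast\}$ along a two-element set mapping to $\{\ast\}$). This is exactly the point where protomodularity must be invoked — the paper cites [Bourn, Proposition 14] for it. Moreover, even granted the pushout, to deduce $e'=\Zcoker(\iota)$ via Proposition \ref{coker e max quotient} you need the left leg $K[e']\to\Z(B')$ of that square to be the \emph{maximum} quotient of $K[e']$ in $\zeros$; this requires Condition (M) (giving $\F(K[e'])\cong\F(K[e])\cong\Z(B)\cong\Z(B')$), which you never invoke in this part. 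Both ingredients need to be supplied for the stability claim to stand.
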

	
	\begin{proof}
		We shall first prove that the class $\facte$ is replete (in a similar way one can show this is also true for $\factm$). Consider an arrow $e \colon A \rightarrow B$ in $\facte$, an arrow $g \colon C \rightarrow D$, and a pair of isomorphisms $\alpha, \beta$ such that $g \alpha = \beta e$. Let $k \colon K[e] \rightarrow A$ be the $\zeros$-kernel of $e$. We have to show that the sequence $K[e] \xrightarrow{\alpha k} C \xrightarrow{g} D$ is exact (and so $g$ is a $\zeros$-cokernel and $K[g] \in \tort$). Let $x$ be such that $gx \in \zideal$; then $e \alpha^{-1} x =\beta^{-1}gx \in \zideal$. So there exists an arrow $\varphi$ such that $k \varphi = \alpha^{-1}x$, and then $x=\alpha k \varphi$ (the uniqueness of such a $\varphi$ is guaranteed since $\alpha k$ is a monomorphism). Hence, $\alpha k = \Zker(g)$. Now, let $y$ be an arrow such that $y \alpha k \in \zideal$. $e = \Zcoker(k) $, so there exists an arrow $\psi$ satisfying $\psi e = y \alpha$. Then, $\psi \beta^{-1} g = \psi e \alpha^{-1}=y \alpha \alpha^{-1}=y$ (the uniqueness of such a $\varphi$ is guaranteed since $g$ is an epimorphism). Hence, $g=\Zcoker(\alpha k)$.\\
		Let us then proceed by showing the construction of the factorization of any arrow $f \colon A \rightarrow B$. Consider the commutative diagram
		\[\begin{tikzcd}
			{\T(K[f])} & {K[f]} & A && B \\
			& {\Z(Q)} && Q & I,
			\arrow["t", tail, from=1-1, to=1-2]
			\arrow["{z_Q}"', two heads, from=1-1, to=2-2]
			\arrow["k", tail, from=1-2, to=1-3]
			\arrow["f", from=1-3, to=1-5]
			\arrow["e"', two heads, from=1-3, to=2-4]
			\arrow["{\varepsilon_Q}"', tail, from=2-2, to=2-4]
			\arrow["m"', from=2-4, to=1-5]
			\arrow["q"', two heads, from=2-4, to=2-5]
			\arrow["i"', tail, from=2-5, to=1-5]
		\end{tikzcd}\]
		where $t=t_{K[f]}$, $e=\Zcoker(kt)$ (we can guarantee the existence of the $\zeros$-cokernel of $kt$ thanks to Corollary \ref{dominio torsione}), $m$ is defined since $fkt \in \zideal$, and $iq$ is the (regular epi, mono)-factorization of $m$. Moreover, since $ekt \in \zideal$, we have $ekt=\varepsilon_Q z_Q$ (where $\varepsilon_Q$ is a monomorphism). Thanks to Condition (N) we know that $kt=\Zker(e)$. Furthermore, recalling that Condition (M) holds, we get $\F(\T(K[f])) \cong \Z(Q)$ and $\F(\T(K[f])) \cong \Z(\F(K[f]))$. We construct the commutative diagram
		\[\begin{tikzcd}
			{\T(K[f])} & {K[f]} & {\F(K[f])} \\
			{\T(K[f])} & A & Q \\
			{\Z(I)} & I & I,
			\arrow["t", tail, from=1-1, to=1-2]
			\arrow[equal, from=1-1, to=2-1]
			\arrow["\eta", two heads, from=1-2, to=1-3]
			\arrow["k", tail, from=1-2, to=2-2]
			\arrow["{k'}", from=1-3, to=2-3]
			\arrow["kt", tail, from=2-1, to=2-2]
			\arrow["{\Z(q)z_Q}"', two heads, from=2-1, to=3-1]
			\arrow["e", two heads, from=2-2, to=2-3]
			\arrow["qe"', two heads, from=2-2, to=3-2]
			\arrow["q", two heads, from=2-3, to=3-3]
			\arrow["{\varepsilon_I}"', tail, from=3-1, to=3-2]
			\arrow[equal, from=3-2, to=3-3]
		\end{tikzcd}\]
		where $\eta=\eta_{K[f]}$, and $k'$ is induced because $ekt \in \zideal$. We observe that $t= \Zker(\eta)$, $kt=\Zker(e)$, $\varepsilon_I=\Zker(id_I)$, $id_{\T(K[f])}=\Zker(\Z(q)z_Q)$, $k=\Zker(qe)$ (thanks to Lemma \ref{ker e mono}); moreover, $\eta$, $e$, $id_I$, $\Z(q)z_Q$, $qe$, and $q$ are regular epimorphisms. We want to prove that $k'=\Zker(q)$; if this statement holds, it follows that $K[m]=K[q]=\F(K[f]) \in \torf$ and $K[e]=\T(K[f]) \in \tort$, and so $f=me$ is the desired $(\facte, \factm)$-factorization. First of all, we check that $k'$ is a monomorphism. Indeed, observe that in the commutative diagram
		\[\begin{tikzcd}
			{\T(K[f])} & {K[f]} & {\F(K[f])} \\
			{\T(K[f])} & A & Q
			\arrow["t", tail, from=1-1, to=1-2]
			\arrow[equal, from=1-1, to=2-1]
			\arrow["\lrcorner"{anchor=center, pos=0}, draw=none, from=1-1, to=2-2]
			\arrow["\eta", two heads, from=1-2, to=1-3]
			\arrow["k", tail, from=1-2, to=2-2]
			\arrow["{k'}", from=1-3, to=2-3]
			\arrow["kt", tail, from=2-1, to=2-2]
			\arrow["e", two heads, from=2-2, to=2-3]
		\end{tikzcd}\]
		the square on the left is a pullback and $\Z(\F(K[f])) \cong \Z(Q)$; hence, we can apply \cite[Lemma 5.4]{Cappelletti} and conclude that $k'$ is a monomorphism (as we already mentioned, in \cite{Cappelletti} a short sequence $X \xrightarrow{x} Y \xrightarrow{y} Z$ is considered exact if $x=\Zker(y)$ and $y$ is a regular epimorphism). Now, define $\tilde{k} \coloneqq \Zker(q)$ and examine the commutative diagram
		\[\begin{tikzcd}
			&&& {\F(K[f])} \\
			{\T(K[f])} & {K[f]} & {K[q]} \\
			{\T(K[f])} & A & Q \\
			{\Z(I)} & I & I,
			\arrow["n"', from=1-4, to=2-3]
			\arrow["{k'}", curve={height=-12pt}, tail, from=1-4, to=3-3]
			\arrow["t", tail, from=2-1, to=2-2]
			\arrow[equal, from=2-1, to=3-1]
			\arrow["\eta", curve={height=-12pt}, two heads, from=2-2, to=1-4]
			\arrow["g", two heads, from=2-2, to=2-3]
			\arrow["k", tail, from=2-2, to=3-2]
			\arrow["{\tilde{k}}", tail, from=2-3, to=3-3]
			\arrow["kt", tail, from=3-1, to=3-2]
			\arrow["{\Z(q)z_Q}"', two heads, from=3-1, to=4-1]
			\arrow["e", two heads, from=3-2, to=3-3]
			\arrow["qe"', two heads, from=3-2, to=4-2]
			\arrow["q", two heads, from=3-3, to=4-3]
			\arrow["{\varepsilon_I}"', tail, from=4-1, to=4-2]
			\arrow[equal, from=4-2, to=4-3]
		\end{tikzcd}\]
		where $n$ is defined since $qk' \eta = qek \in \zideal$ implies $qk' \in \zideal$ (thanks to \cite[Proposition 2.5]{Cappelletti}), and $g$ exists since $qek \in \zideal$. Let us prove that $g$ is a regular epimorphism: in the commutative diagram
		\[\begin{tikzcd}
			{K[f]} & A & I \\
			{K[q]} & Q & I
			\arrow["k", tail, from=1-1, to=1-2]
			\arrow["g"', two heads, from=1-1, to=2-1]
			\arrow["\lrcorner"{anchor=center, pos=0}, draw=none, from=1-1, to=2-2]
			\arrow["qe", two heads, from=1-2, to=1-3]
			\arrow["e"', two heads, from=1-2, to=2-2]
			\arrow[equal, from=1-3, to=2-3]
			\arrow["{\tilde{k}}"', tail, from=2-1, to=2-2]
			\arrow["q"', two heads, from=2-2, to=2-3]
		\end{tikzcd}\]
		the square on the left is a pullback since $id_I$ is a monomorphism (we are applying, again, \cite[Lemma 5.4]{Cappelletti}); therefore, by regularity, $g$ is a regular epimorphism as pullback of the regular epimorphism $e$. Hence, $n$ is a monomorphism because $k'$ is, and $n$ is a regular epimorphism because $g$ is. Therefore, $n$ is an isomorphism and $f=me$ is the desired $(\facte, \factm)$-factorization.\\
		It remains to prove that, for every arrow $e \in \facte$ and for every arrow $h$ such that $\Z(h)$ is an isomorphism, the pullback of $e$ along $h$ is in $\facte$. Let $\overline{e}$ be defined by the pullback
		\[\begin{tikzcd}
			P & C \\
			A & B.
			\arrow["{\overline{e}}", two heads, from=1-1, to=1-2]
			\arrow[from=1-1, to=2-1]
			\arrow["\lrcorner"{anchor=center, pos=0}, draw=none, from=1-1, to=2-2]
			\arrow["h", from=1-2, to=2-2]
			\arrow["e"', two heads, from=2-1, to=2-2]
		\end{tikzcd}\]
		Applying \cite[Proposition 4.3]{Cappelletti}, we obtain $K[\overline{e}] \cong K[e] \in \tort$. Thanks to Condition (M) we get $\F(K[\overline{e}]) \cong \F(K[e]) \cong \Z(B) \cong \Z(C)$. To conclude, the pullback
		\[\begin{tikzcd}
			{K[\overline{e}]} & P \\
			{\Z(C)} & C
			\arrow["{\overline{k}}",, from=1-1, to=1-2]
			\arrow[two heads, from=1-1, to=2-1]
			\arrow["\lrcorner"{anchor=center, pos=0}, draw=none, from=1-1, to=2-2]
			\arrow["{\overline{e}}", two heads, from=1-2, to=2-2]
			\arrow["{\varepsilon_C}"', from=2-1, to=2-2]
		\end{tikzcd}\]
		is also a pushout, thanks to \cite[Proposition 14]{Bourn protomod}. So, since $M(K[\overline{e}]) \cong \F(K[\overline{e}]) \cong \Z(C)$, $\overline{e}$ is a $\zeros$-cokernel (see the proof of Proposition \ref{coker e max quotient}).
	\end{proof}
	
	We have thus shown that any torsion theory satisfying Conditions (N) and (M) allows for the construction of a stable factorization system with respect to the class of arrows inverted by $\Z$, where the arrows in
	$\facte$ are $\zeros$-cokernels. This naturally leads to the question of whether such a construction admits an inverse. In other words, we want to understand if, starting from a factorization system that satisfies the aforementioned conditions, it is possible to construct a torsion theory.
	First of all, we state the following:
	
	\begin{lem}\label{zero frecce in M}
		Let $\mathcal{C}$ be a category with a fixed class $\zeros$ of zero objects. Consider a factorization system $(\facte, \factm)$ in $\mathcal{C}$ such that every arrow of $\facte$ is a $\zeros$-cokernel. Then, every arrow $Z \rightarrow A$, with $Z \in \mathcal{Z}$, belongs to $\factm$.
	\end{lem}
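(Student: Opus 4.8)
The plan is to take a $(\facte,\factm)$-factorization of the given arrow and show that its $\facte$-component is forced to be an isomorphism. So, given $z\colon Z\rightarrow A$ with $Z\in\zeros$, I would write $z=me$ with $e\colon Z\rightarrow I$ in $\facte$ and $m\colon I\rightarrow A$ in $\factm$. Since $\factm$ contains all isomorphisms and is closed under composition, it then suffices to prove that $e$ is an isomorphism: from $z=me$ with $m\in\factm$ and $e$ invertible we immediately get $z\in\factm$.

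The heart of the argument is the identification of $e$. By hypothesis every arrow of $\facte$ is a $\zeros$-cokernel, so $e=\Zcoker(g)$ for some arrow $g$; since the domain of a $\zeros$-cokernel of $g$ is the codomain of $g$, we have $g\colon Y\rightarrow Z$. The key observation is that $\mathrm{id}_Z$ is itself a $\zeros$-cokernel of $g$. Indeed, $g$ has codomain $Z\in\zeros$, hence $g\in\zideal$; because $\zideal$ is an ideal of morphisms, $hg\in\zideal$ for \emph{every} arrow $h$ with domain $Z$, so the defining universal property of $\Zcoker(g)$ is satisfied vacuously by $\mathrm{id}_Z$ (the unique factorization of any such $h$ through $\mathrm{id}_Z$ being $h$ itself). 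Uniqueness of $\zeros$-cokernels up to isomorphism then forces $e\cong\mathrm{id}_Z$, i.e.\ $e$ is an isomorphism, and we conclude $z=me\in\factm$ as above.

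I do not expect a genuine obstacle here. The only point that deserves a little care is to spell out that ``$e$ is a $\zeros$-cokernel'' means precisely that $e\cong\Zcoker(g)$ for an arrow $g$ whose codomain is exactly $\mathrm{dom}(e)=Z$, so that the hypothesis $Z\in\zeros$ can be brought to bear and $g$ is seen to lie in $\zideal$; everything else is the trivial universal property of the $\zeros$-cokernel of an arrow landing in $\zeros$. (One could alternatively argue that $e$, being a $\zeros$-cokernel, is a strong epimorphism and that it factors through the monomorphism $\varepsilon_I\colon\Z(I)\rightarrowtail I$ by condition (iii) of Proposition \ref{contesto generale}, forcing $\varepsilon_I$ to split and hence $I\in\zeros$; but the $\zeros$-cokernel computation above is shorter and yields directly that $e$ is invertible.)
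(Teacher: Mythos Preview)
Your proof is correct and follows essentially the same approach as the paper: factor $z=me$ and show that the $\facte$-part $e$ is an isomorphism because it is a $\zeros$-cokernel with domain in $\zeros$. The paper phrases the key step as $e=\Zcoker(\Zker(e))=\Zcoker(id_Z)=id_Z$, while you instead argue directly that any $g\colon Y\to Z$ has $id_Z$ as its $\zeros$-cokernel; these are the same computation.
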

	
	\begin{proof}
		Let
		\[\begin{tikzcd}
			{Z} && {A} \\
			& I
			\arrow[from=1-1, to=1-3]
			\arrow["e"', from=1-1, to=2-2]
			\arrow["m"', from=2-2, to=1-3]
		\end{tikzcd}\]
		be the $(\facte, \factm)$-factorization of $Z\rightarrow A$. Since $e$ is a $\zeros$-cokernel it follows $e=\Zcoker ( \Zker (e))= \Zcoker (id_{Z})= id_{Z}$. Hence, $e$ is an isomorphism an so $Z\rightarrow A$ belongs to $\factm$.
	\end{proof}
	
	To ensure our desired results, it is necessary to introduce the following:
	
	\begin{defi}
		We say that the class $\zeros$ of zero objects, in a category $\mathcal{C}$, is \emph{terminal} if for every $A \in \mathcal{C}$ there exists an arrow $v_A \colon A \rightarrow \V(A)$ such that:
		\begin{itemize}
			\item[(a)] $ \V(A) \in \zeros$;
			\item[(b)] for every $\chi \colon A \rightarrow Z$, with $Z \in \zeros$, there exists a (necessarily unique) arrow $\varphi \colon Z \rightarrow \V(A)$ such that $\varphi \chi = v_A$.
		\end{itemize}
	\end{defi}
	
	This definition, despite seeming artificial, is actually highly consistent with the underlying idea behind the definition of these non-pointed categories with a class of zero objects
	$\zeros$. In \cite{Cappelletti}, we considered as primary and main example the one given by a protomodular variety of universal algebras in which the theory has at least one constant; in this context, we define $\zeros$ to be the class of objects obtainable as quotients of the initial object. Since in such categories the arrow from the initial object to the terminal one is always a regular epimorphism, it follows that the terminal object belongs to the class
	$\zeros$, making this class trivially terminal.
	
	\begin{oss}
		Let $\zeros$ be a terminal class of zero objects in a category $\mathcal{C}$. Given a pair of arrows $f_1,f_2 \colon A \rightarrow \V(A)$ and observing that the only existing arrow from $\V(A)$ to itself is the identity, the fact that $\zeros$ is a terminal class gives the commutative triangle
		\[\begin{tikzcd}
			A & {\V(A)} \\
			& {\V(A),}
			\arrow["{f_1}", from=1-1, to=1-2]
			\arrow["{f_2}"', from=1-1, to=2-2]
			\arrow[equal, from=1-2, to=2-2]
		\end{tikzcd}\]
		from which we get $f_1=f_2$.
	\end{oss}
	
	\begin{oss}\label{prop V}
		The assignment $A \mapsto \V(A)$ allows to define a functor $\V \colon \op{\mathcal{C}} \rightarrow \zeros$. Given an arrow $f \colon A \rightarrow B$ in $\mathcal{C}$, we define $\V(f) \colon \V(B) \rightarrow \V(A)$ as the unique arrow such that $\V(f)v_Bf=v_A$. Clearly, this assignment is functorial.\\
		In particular, the following facts trivially hold:
		\begin{itemize}
			\item[(a)] for a pair of objects $A,B$ of $\mathcal{C}$ such that there exist an arrow $A \rightarrow B$ and an arrow $B \rightarrow A$, one has $\V(A) \cong \V(B)$;
			\item[(b)] for every object $A$ of $\mathcal{C}$, $\V(\V(A)) \cong \V(A)$.
		\end{itemize}
	\end{oss}
	
	From now on we will work with a fixed terminal class $\zeros$ of zero objects in $\mathcal{C}$.\\
	Starting from a factorization system $(\facte, \factm)$, we define a pair of full subcategories of $\mathcal{C}$ whose objects are $\tort \coloneqq \{ T \in \mathcal{C} \,|\, \exists \, T \rightarrow Z \text{ belonging to } \facte \text{ (where $Z \in \zeros$)}\}$ and $\torf \coloneqq \{ F \in \mathcal{C} \,|\, F \rightarrow \V(F) \text{ belongs to } \factm\}$.
	
	\begin{prop}
		Let $\mathcal{C}$ be a regular category with a fixed terminal class $\zeros$ of zero objects. Consider a factorization system $(\facte, \factm)$ which is stable with respect to the class of arrows inverted by $\Z$, and where the arrows in $\facte$ are $\zeros$-cokernels. Then, the pair of subcategories $\torsione$ defined above forms a torsion theory such that $\tort \cap \torf = \zeros$ and satisfying (N) and (M).
	\end{prop}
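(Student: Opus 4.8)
The plan is to produce, for each object $A$, a short $\zeros$-exact sequence with a torsion left endpoint and a torsion-free right endpoint out of the $(\facte,\factm)$-factorization of the canonical arrow $v_A\colon A\to\V(A)$, and then to verify the requirements one by one. Everything rests on the following observation, which is where the stability hypothesis enters: \emph{for every $e\colon A\to B$ in $\facte$, the domain $K[e]$ of $\Zker(e)$ lies in $\tort$.} Indeed, $\Zker(e)$ is the pullback of $\varepsilon_B\colon\Z(B)\rightarrowtail B$ along $e$; since $\varepsilon_B$ is a monomorphism, $\Z(\varepsilon_B)$ is an isomorphism, so $\varepsilon_B$ is inverted by $\Z$. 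Stability of $(\facte,\factm)$ with respect to the $\Z$-inverted arrows then forces the other projection $\chi\colon K[e]\to\Z(B)$ of that pullback square to lie in $\facte$; as $\Z(B)\in\zeros$, this exhibits $K[e]$ as an object of $\tort$.

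\emph{Existence of the exact sequences.} Fix $A$ and factor $v_A=m_A e_A$ with $e_A\colon A\to\F(A)$ in $\facte$ and $m_A\colon\F(A)\to\V(A)$ in $\factm$. Since $e_A$ is a $\zeros$-cokernel (every arrow of $\facte$ is), it is the $\zeros$-cokernel of its own $\zeros$-kernel, so $K[e_A]\xrightarrow{\Zker(e_A)}A\xrightarrow{e_A}\F(A)$ is short $\zeros$-exact, and $K[e_A]\in\tort$ by the observation above. That $\F(A)\in\torf$ follows because $m_A$ has codomain $\V(A)\in\zeros$: the defining property of $\V$ gives $\psi\colon\V(A)\to\V(\F(A))$ with $\psi m_A=v_{\F(A)}$, and $\psi$ is mutually inverse to $\V(e_A)$ because $\zeros$ is posetal, so $v_{\F(A)}=\psi m_A$ is the composite of an arrow of $\factm$ with an isomorphism, hence in $\factm$. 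This gives condition (c) of Definition \ref{definizione preesatta}, with $\T(A):=K[e_A]$ and $\F(A)$ as above.

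\emph{Conditions (a), (b) and repleteness.} For (b), let $T\in\tort$, $F\in\torf$, $f\colon T\to F$, and choose $e\colon T\to Z$ in $\facte$ with $Z\in\zeros$. The arrow $v_F f\circ\Zker(e)$ has codomain $\V(F)\in\zeros$, hence lies in $\zideal$, so the $\zeros$-cokernel property of $e=\Zcoker(\Zker(e))$ yields $w\colon Z\to\V(F)$ with $we=v_F f$; feeding the resulting commutative square into the lifting $e\downarrow v_F$ produces $d\colon Z\to F$ with $de=f$, so $f$ factors through $Z\in\zeros$ and $f\in\zideal$. Applying this with $T=F=A$ and $f=\mathrm{id}_A$ shows that any $A\in\tort\cap\torf$ has $\mathrm{id}_A\in\zideal$, so $\mathrm{id}_A$ factors through $\varepsilon_A$ (Proposition \ref{contesto generale}), making the monomorphism $\varepsilon_A$ also a split epimorphism, hence an isomorphism, so $A\in\zeros$; conversely $Z\in\zeros$ lies in $\tort$ (since $\mathrm{id}_Z\in\facte$ and $Z\in\zeros$) and in $\torf$ (since $v_Z$ is an arrow out of $\zeros$, hence in $\factm$ by Lemma \ref{zero frecce in M}). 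This gives (a), $\zeros=\tort\cap\torf$; repleteness of $\tort$ and $\torf$ is immediate from closure of $\facte$ and $\factm$ under isomorphisms together with the functoriality and uniqueness properties of $\V$.

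\emph{Conditions (N) and (M).} These are the technical core. For (M), consider a short $\zeros$-exact sequence $T\xrightarrow{k}A\xrightarrow{q}Q$ with $T\in\tort$; then $T$ has a maximum quotient $M(T)$ in $\zeros$ and $\F(T)=M(T)$, with $e_T:=\Zcoker(\mathrm{id}_T)\colon T\to M(T)$. By Proposition \ref{cokernelpushout} the square with $k$, $q$, $\varepsilon_Q\colon\Z(Q)\rightarrowtail Q$ and the induced $\chi\colon T\to\Z(Q)$ is a pushout, and it is also a pullback because $k=\Zker(q)$; hence $\chi$, a pullback of the regular epimorphism $q$, is a regular epimorphism, and it factors through $e_T$. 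Comparing this pushout presentation of $q=\Zcoker(k)$ with the one furnished by Proposition \ref{coker e max quotient} (as the pushout of $e_T$ along $k$), and using that the pushout of a monomorphism along a regular epimorphism occurring here is again a monomorphism (which holds in the present context; compare the use of \cite{Bourn protomod} in the proof of Proposition \ref{Da torsione a sistema}), one identifies $M(T)$ with $\Z(Q)$, i.e.\ $\F(T)\cong\Z(Q)$. Condition (N) then follows by observing that, for $k=\Zker(f)$, the composite $k\,t_{K[f]}$ has torsion domain $\T(K[f])$, so its $\zeros$-cokernel $q'$ exists by Corollary \ref{dominio torsione}, and the same pushout–pullback analysis — now invoking (M) for the sequence $\T(K[f])\to K[f]\to\Z(\F(K[f]))$ — shows $k\,t_{K[f]}=\Zker(q')$, so it is indeed a $\zeros$-kernel. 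The main obstacle is precisely this last bundle of diagram chases: controlling the zero part of a $\zeros$-cokernel object, and showing that the pushouts manufactured from maximum quotients are simultaneously pullbacks.
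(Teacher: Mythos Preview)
Your treatment of the ``easy'' parts---the exact sequence via the $(\facte,\factm)$-factorization of $v_A$, the argument that $K[e]\in\tort$ for $e\in\facte$ via stability along $\varepsilon_B$, and the verification of (a) and (b)---is correct and essentially the same as the paper's.

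The gap is in (M), and it propagates to (N). Your argument for (M) hinges on the claim that the pushout $m\colon M(T)\to Q$ of the monomorphism $k$ along the regular epimorphism $e_T$ is again a monomorphism; from there you (correctly) would conclude $M(T)\cong\Z(Q)$ by uniqueness of the zero part. But ``pushout of a mono along a regular epi is a mono'' is a protomodularity fact (precisely the content of \cite[Proposition 14]{Bourn protomod} that you cite), and the present proposition assumes only regularity. You cannot import that step from Proposition~\ref{Da torsione a sistema}, which has the extra hypothesis. The paper avoids this entirely: it first shows that $q$ itself lies in $\facte$ (by comparing $\Zker(q)$ with $\Zker(e)$ for the $(\facte,\factm)$-factorization $q=me$, using uniqueness of factorizations), and then pulls $q$ back along $\varepsilon_Q$; stability with respect to $\Z$-inverted arrows puts that pullback projection $z''\colon T\to\Z(Q)$ in $\facte$, whence $z''=\Zcoker(\mathrm{id}_T)=z$ and $\Z(Q)\cong M(T)=\F(T)$. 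No protomodularity is needed.

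Your sketch for (N) is too thin to stand on its own. Knowing that $q'=\Zcoker(kt_{K[f]})$ exists does not give $kt_{K[f]}=\Zker(q')$; that equality is exactly the statement that $kt_{K[f]}$ is a $\zeros$-kernel, which is what you must prove. Invoking (M) on $\T(K[f])\to K[f]\to\F(K[f])$ only yields $\F\T(K[f])\cong\Z\F(K[f])$, which by itself does not identify $kt_{K[f]}$ as a $\zeros$-kernel of anything. The paper's proof is substantially different: it takes the $(\facte,\factm)$-factorizations of $f$ and of $v_K$, uses the lifting $e'\downarrow m$ and stability along $\varepsilon_B$ to recognize two factorizations of $\chi=\varepsilon_B^*(f)$ as the same, and thereby exhibits a pullback square whose $\zeros$-kernel comparison (via \cite[Proposition 4.3]{Cappelletti}) gives $\Zker(e)=k\,\Zker(e')=kt_K$. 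You should either carry out that factorization-system argument or supply the missing diagram chase that turns your pushout--pullback outline into an actual identification of $\Zker(q')$.
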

	
	\begin{proof}
		First of all, we prove that $\tort \cap \torf = \zeros$. On the one hand, let $Z \in \zeros$; clearly $id_Z \in \facte$, and so $Z \in \tort$. Moreover, $Z \rightarrow \V(Z) \in \factm$ (thanks to Lemma \ref{zero frecce in M}), hence $Z \in \torf$. On the other hand, consider $X \in \tort \cap \torf$ and examine the commutative diagram
		\[\begin{tikzcd}
			X && Z \\
			& {\V(X),}
			\arrow["{e \in \facte}", from=1-1, to=1-3]
			\arrow["{m \in \factm}"', from=1-1, to=2-2]
			\arrow["{z \in \factm}", from=1-3, to=2-2]
		\end{tikzcd}\]
		where $e$ is the arrow that makes $X$ an object of $\tort$, and $m \in \factm$ since $X \in \torf$. Therefore, since by Lemma \ref{zero frecce in M} $z \in \factm$ and since the $(\facte, \factm)$-factorization is unique up to isomorphisms, we get that $e$ is an isomorphism and so $X \in \zeros$.\\
		Let us then prove that every arrow $f \colon T \rightarrow F$ with domain in $\tort$ and codomain in $\torf$ is trivial. Let $e \colon T \rightarrow Z_1$ be an arrow of $\facte$ making $T$ an object of $\tort$, and let $v_F \colon F \rightarrow \V(F)$ be in $\factm$. We know that there exist an arrow $\psi \colon Z_1 \rightarrow \V(T)$ such that $\psi e = v_T$ and an arrow $\theta \colon \V(F) \rightarrow \V(T)$ such that $ \theta v_F f=v_T$ (the existence of both is implied by the property of $v_T$). We observe that $\theta \in \factm$, because it is an arrow between two zero objects. Hence, we can consider the commutative diagram
		\[\begin{tikzcd}
			T & {Z_1} \\
			F & {\V(T),}
			\arrow["e", from=1-1, to=1-2]
			\arrow["f"', from=1-1, to=2-1]
			\arrow["d"', from=1-2, to=2-1]
			\arrow["\psi", from=1-2, to=2-2]
			\arrow["{\theta v_F}"', from=2-1, to=2-2]
		\end{tikzcd}\]
		where $d$ exists since $e \downarrow \theta v_F$ (observe that $\theta v_F \in \factm$ as a composite of arrows belonging to $\factm$). Therefore $f \in \zideal$.\\
		Next, we want to provide, for every object $A$ of $\mathcal{C}$, a short exact sequence $\T(A)\rightarrow A \rightarrow \F(A)$ with $\T(A) \in \tort$ and $\F(A) \in \torf$. To do this, we take the $(\facte, \factm)$-factorization of $v_A \colon A \rightarrow \V(A)$
		\[\begin{tikzcd}
			A && {\V(A)} \\
			& I.
			\arrow["{v_A}", from=1-1, to=1-3]
			\arrow["e"', from=1-1, to=2-2]
			\arrow["m"', from=2-2, to=1-3]
		\end{tikzcd}\]
		Since we have $\V(e) \colon \V(I) \to \V(A)$ and $\V(m) \colon \V(\V(A)) \cong \V(A) \to \V(I)$ (see Remark \ref{prop V}), we get $\V(I) \cong \V(A)$, and this implies $I \in \torf$. We prove that $K[e] \in \tort$. Since $K[e]$ is defined by the pullback
		\[\begin{tikzcd}
			{K[e]} & {\Z(I)} \\
			A & I
			\arrow["{\overline{e}}", from=1-1, to=1-2]
			\arrow["{k}"', from=1-1, to=2-1]
			\arrow["\lrcorner"{anchor=center, pos=0}, draw=none, from=1-1, to=2-2]
			\arrow["{\varepsilon_I}", tail, from=1-2, to=2-2]
			\arrow["e"', from=2-1, to=2-2]
		\end{tikzcd}\]
		and $\facte$ is stable with respect to the class of arrows inverted by $\Z$ ($\varepsilon_I$ is trivially inverted by $\Z$), we get $\overline{e} \in \facte$ and so $K[e] \in \tort$. Therefore, $K[e] \xrightarrow{k} A \xrightarrow{e} I$ is the desired sequence.\\
		As a second-to-last step, we show that the torsion theory $\torsione$ satisfies Condition (N). Consider $\T(K) \xrightarrow{t_K} K \xrightarrow{k} A \xrightarrow{f} B$, where $f$ is an arrow of $\mathcal{C}$ and $k \coloneqq \Zker(f)$. We have to prove that $kt_K$ is the $\zeros$-kernel of some arrow. Take the $(\facte, \factm)$-factorizations of $f$ and $v_K$:
		\[\begin{tikzcd}
			A && B & K && {\V(K)} \\
			& I, &&& {I'.}
			\arrow["f", from=1-1, to=1-3]
			\arrow["e"', from=1-1, to=2-2]
			\arrow["{v_K}", from=1-4, to=1-6]
			\arrow["{e'}"', from=1-4, to=2-5]
			\arrow["m"', from=2-2, to=1-3]
			\arrow["{m'}"', from=2-5, to=1-6]
		\end{tikzcd}\]
		We recall that $k$ is defined by the pullback
		\[\begin{tikzcd}
			K & {\Z(B)} \\
			A & B,
			\arrow["\chi", from=1-1, to=1-2]
			\arrow["k"', tail, from=1-1, to=2-1]
			\arrow["\lrcorner"{anchor=center, pos=0}, draw=none, from=1-1, to=2-2]
			\arrow["{\varepsilon_B}", tail, from=1-2, to=2-2]
			\arrow["f"', from=2-1, to=2-2]
		\end{tikzcd}\]
		and observe that $\V(I')=\V(K)$, using again Remark \ref{prop V}. Consider the commutative diagram
		\[\begin{tikzcd}
			K && {\Z(B)} & {\V(K)} \\
			& {I'',}
			\arrow["\chi", from=1-1, to=1-3]
			\arrow["{e''}"', from=1-1, to=2-2]
			\arrow["\theta", from=1-3, to=1-4]
			\arrow["{m''}"', from=2-2, to=1-3]
			\arrow["{\theta m''}"', curve={height=12pt}, from=2-2, to=1-4]
		\end{tikzcd}\]
		where $m''e''=\chi$ is the $(\facte, \factm)$-factorization of $\chi$ and $\theta$ exists thanks to the property of $\V(K)$. We know that $\theta \in \factm$ (being an arrow between zero objects). Hence, due to the uniqueness of the $(\facte, \factm)$-factorization, we have $I''=I'$, $e''=e'$, and $\theta m'' = m'$. Since $e' \downarrow m$, there exists an arrow $d$ making the following diagram commutative:
		\[\begin{tikzcd}
			K & {I'} \\
			I & B
			\arrow["{e'}", from=1-1, to=1-2]
			\arrow["ek"', from=1-1, to=2-1]
			\arrow["d"', from=1-2, to=2-1]
			\arrow["{\varepsilon_Bm''}", from=1-2, to=2-2]
			\arrow["m"', from=2-1, to=2-2]
		\end{tikzcd}\]
		(observe that $mek=fk=\varepsilon_B \chi = \varepsilon_B m'' e'$). Now, we consider the diagrams
		\[\begin{tikzcd}
			K & {I'} & {\Z(B)} & K & {I'''} & {\Z(B)} \\
			A & I & B & A & I & B,
			\arrow["{e'}", from=1-1, to=1-2]
			\arrow["k"', tail, from=1-1, to=2-1]
			\arrow["\lrcorner"{anchor=center, pos=0}, draw=none, from=1-1, to=2-2]
			\arrow["{m''}", from=1-2, to=1-3]
			\arrow["d", from=1-2, to=2-2]
			\arrow["{\varepsilon_B}", tail, from=1-3, to=2-3]
			\arrow["{e'''}", from=1-4, to=1-5]
			\arrow[tail, from=1-4, to=2-4]
			\arrow["\lrcorner"{anchor=center, pos=0}, draw=none, from=1-4, to=2-5]
			\arrow["{m'''}", from=1-5, to=1-6]
			\arrow["{d'}", from=1-5, to=2-5]
			\arrow["\lrcorner"{anchor=center, pos=0}, draw=none, from=1-5, to=2-6]
			\arrow["{\varepsilon_B}", tail, from=1-6, to=2-6]
			\arrow["e"', from=2-1, to=2-2]
			\arrow["m"', from=2-2, to=2-3]
			\arrow["e"', from=2-4, to=2-5]
			\arrow["m"', from=2-5, to=2-6]
		\end{tikzcd}\]
		where the outer rectangle on the left is a pullback while, in the diagram on the right, both squares are pullbacks. Since $(\facte, \factm)$ is stable with respect to the class of arrows inverted by $\Z$, and $\Z(\varepsilon_B)$ is an isomorphism, it follows that $m''' \in \factm$ and $e''' \in \facte$. Moreover, due to the uniqueness of the $(\facte, \factm)$-factorization of $\chi= \varepsilon_B^*(f)=m'' e' =m''' e'''$, we obtain $I'=I'''$, $m'''=m''$, $e'''=e'$. This also implies $d=d'$ (since $e'$ is an epimorphism and $de'=d'e'$). Therefore, $d$ is a monomorphism as pullback of the monomorphism $\varepsilon_B$. To conclude, applying \cite[Proposition 4.3]{Cappelletti} to the pullback
		\[\begin{tikzcd}
			K & {I'} \\
			A & I,
			\arrow["{e'}", from=1-1, to=1-2]
			\arrow["k"', tail, from=1-1, to=2-1]
			\arrow["\lrcorner"{anchor=center, pos=0}, draw=none, from=1-1, to=2-2]
			\arrow["{d'}", tail, from=1-2, to=2-2]
			\arrow["e"', from=2-1, to=2-2]
		\end{tikzcd}\]
		we get that $\Zker(e)=k \, \Zker(e')=kt_K$ (observe that, by definition, $t_K=\Zker(e')$), and so $kt_K$ is a $\zeros$-kernel.\\
		The last step is to prove the validity of Condition (M). Take a short exact sequence $T \xrightarrow{h} C \xrightarrow{q} Q$ with $T \in \tort$. Let $z \colon T \rightarrow Z$ be a maximum quotient of $T$ in $\mathcal{Z}$ (recall that, as shown at the end of the previous section, every torsion object admits a maximum quotient in $\zeros$). Observe that $z \in \facte$: in fact we know that there exists an arrow $z' \colon T \rightarrow Z'$ belonging to $\facte$. Therefore, $z'=\Zcoker(\Zker(z'))=\Zcoker(id_T)=z$ and so $z$ belongs to $\facte$. Since $h=\Zker(q)$, there exists an arrow $T \rightarrow \Z(Q)$, and this arrow factors through the object $Z$ (because $Z$ is the codomain of the maximum quotient $z$). To sum up, we have the following commutative diagram
		\[\begin{tikzcd}
			T & C & Q \\
			& Z \\
			& {\Z(Q),}
			\arrow["h", from=1-1, to=1-2]
			\arrow["{z \in \facte}", from=1-1, to=2-2]
			\arrow[curve={height=18pt}, from=1-1, to=3-2]
			\arrow["q", from=1-2, to=1-3]
			\arrow["{i \in \factm}", from=2-2, to=1-3]
			\arrow["{\in \factm}", from=2-2, to=3-2]
			\arrow["{\in \factm}"', curve={height=18pt}, from=3-2, to=1-3]
		\end{tikzcd}\]
		where all the arrows with trivial domain belong to $\factm$ thanks to Lemma \ref{zero frecce in M}. Now, we show that $q \in \facte$. Consider the $(\facte, \factm)$-factorization $q=me$. If we prove that $\Zker(q)=\Zker(e)$, since both $q$ and $e$ are $\zeros$-cokernels, we can conclude that $m$ is an isomorphism. For this purpose, consider the commutative diagram
		\[\begin{tikzcd}
			& {I'} && I \\
			T && C && Q \\
			& {T',}
			\arrow["{m'}", from=1-2, to=1-4]
			\arrow["m", from=1-4, to=2-5]
			\arrow["{e'}", from=2-1, to=1-2]
			\arrow["h", from=2-1, to=2-3]
			\arrow["\psi"', shift right, dashed, from=2-1, to=3-2]
			\arrow["e", from=2-3, to=1-4]
			\arrow["q", from=2-3, to=2-5]
			\arrow["\varphi"', shift right, dashed, from=3-2, to=2-1]
			\arrow["{h'}"', from=3-2, to=2-3]
		\end{tikzcd}\]
		where $h' \coloneqq \Zker (e)$ and $m'e'$ is the $(\facte, \factm)$-factorization of $eh$. We want to show that the two dashed arrows that make the corresponding triangles commute exist. Trivially, $\varphi$ is determined by $qh'=meh' \in \zideal$. But also $eh \in \zideal$: we know that $eh=m'e'$, so $qh=mm'e'$; but $qh=iz$, so (again thanks to the uniqueness of the $(\facte, \factm)$-factorization) we obtain $I'=Z$, and hence $eh \in \zideal$. This implies the existence of $\psi$. Clearly, $\varphi$ and $\psi$ are inverse to each other. To conclude, consider the pullback
		\[\begin{tikzcd}
			T & {\Z(Q)} \\
			C & Q;
			\arrow["{z''}", from=1-1, to=1-2]
			\arrow["h"', tail, from=1-1, to=2-1]
			\arrow["\lrcorner"{anchor=center, pos=0}, draw=none, from=1-1, to=2-2]
			\arrow["{\varepsilon_Q}", tail, from=1-2, to=2-2]
			\arrow["{q=e}"', from=2-1, to=2-2]
		\end{tikzcd}\]
		since the class $\facte$ is stable with respect to the class of arrows inverted by $\Z$, and $\varepsilon_Q$ belongs to this class, we get that $z'' \in \facte$. Hence, $z''=\Zcoker(\Zker(z''))=\Zcoker (id_T)=z$. Recalling that the codomain of $z$ is $Z$ and the codomain of $z''$ is $\Z(Q)$, we conclude $Z=\Z(Q)$.
	\end{proof}
	
	To sum up, we have shown that, given a regular protomodular category $\mathcal{C}$ with a fixed terminal class $\zeros$ of zero objects, starting from a torsion theory that satisfies Conditions (N) and (M) it is possible to construct a factorization system $(\facte, \factm)$ which is stable with respect to the class of arrows inverted by $\Z$, and where every arrow in $\facte$ is a $\zeros$-cokernel. Moreover, we have observed how such a factorization system allows for the definition of a torsion theory satisfying Conditions (N) and (M). It makes sense to ask whether these constructions are inverse to each other. The answer is affirmative, but to verify this we need the following:
	
	\begin{lem}\label{torsioni uguali}
		Let $(\tort, \torf)$ and $(\tort', \torf)$ be two torsion theories in a category $\mathcal{C}$ with a fixed class $\zeros$ of zero objects. Then, $\tort= \tort'$.
	\end{lem}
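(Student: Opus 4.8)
The plan is to show that in any $\zeros$-torsion theory $(\tort,\torf)$ the torsion subcategory is completely determined by the torsion-free one, via the orthogonality-type characterization
\[
A\in\tort \quad\Longleftrightarrow\quad \Hom(A,F)\subseteq\zideal \text{ for every } F\in\torf .
\]
Once this is established, the lemma is immediate: since the class $\zeros$ (hence the ideal $\zideal$) is fixed, the right-hand condition is the same for $(\tort,\torf)$ and $(\tort',\torf)$, so $\tort=\tort'$.

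The forward implication is nothing but condition (b) of Definition \ref{definizione preesatta}, applied to $T=A$. For the backward implication I would argue as follows. Let $\T(A)\xrightarrow{t_A}A\xrightarrow{\eta_A}\F(A)$ be the short $\zeros$-exact sequence attached to $A$ by the torsion theory. Since $\F(A)\in\torf$, the hypothesis gives $\eta_A\in\zideal$. Because $\zideal$ is an ideal of morphisms, $\eta_A e\in\zideal$ for every $e\colon E\to A$; hence $id_A$ satisfies both defining properties of the $\zeros$-kernel of $\eta_A$ (property (a) is $\eta_A id_A=\eta_A\in\zideal$, and property (b) holds trivially, the required factorization of any such $e$ being $e$ itself, unique since $id_A$ is a monomorphism). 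Thus $id_A=\Zker(\eta_A)$, and by uniqueness of $\zeros$-kernels the monomorphism $t_A\colon\T(A)\to A$ is isomorphic to $id_A$; in particular $\T(A)\cong A$. Since $\T(A)\in\tort$ and $\tort$ is replete, we conclude $A\in\tort$.

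I do not expect any real obstacle here: the argument only uses that $\zideal$ is an ideal, the elementary observation that $\Zker(f)\cong id$ whenever $f\in\zideal$, uniqueness of $\zeros$-kernels, and repleteness of $\tort$. An alternative route — which I would mention but not develop — is to invoke the uniqueness up to natural isomorphism of the reflector $\F\colon\mathcal C\to\torf$ onto the reflective subcategory $\torf$: the two torsion theories then induce the same unit $\eta$ up to isomorphism, hence, by Lemma \ref{ker e mono}, the same torsion radical $\T(A)=K[\eta_A]$ up to isomorphism over $A$; and one checks directly that $A\in\tort$ exactly when $t_A$ is an isomorphism. Either approach works; I would present the first, being the most self-contained.
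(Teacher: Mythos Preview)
Your proposal is correct. The main argument you present — characterizing $\tort$ as $\{A\mid \Hom(A,F)\subseteq\zideal\text{ for all }F\in\torf\}$ — is a genuinely different route from the paper's one-line proof, which is precisely the ``alternative route'' you sketch at the end: since $\torf$ determines its reflector and unit $\eta$ up to isomorphism, both theories give $\T(A)=K[\eta_A]=\T'(A)$, and $\tort$ is recovered as the essential image of $\T$. Your orthogonality argument is more self-contained (it avoids invoking uniqueness of adjoints and works directly from the ideal property of $\zideal$ and the universal property of $\zeros$-kernels), while the paper's approach is terser but leans on the reflection machinery already set up. The orthogonality characterization you establish along the way is a useful fact in its own right for pretorsion theories.
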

	
	\begin{proof}
		It suffices to observe that the subcategories $\tort$ and $\tort'$ are uniquely determined by the functors $\T$ and $\T'$. So, $\T(A)=K[\eta_A]=\T'(A)$.
	\end{proof}
	
	\begin{prop}
		Let $\mathcal{C}$ be a regular protomodular category with a fixed terminal class $\zeros$ of zero objects. The assignments just described between torsion theories and factorization systems are inverse to each other.
	\end{prop}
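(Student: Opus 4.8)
The plan is to show that the two passages are mutually inverse by comparing, in each round trip, the underlying pair of subcategories (resp.\ the pair of classes of arrows) directly. Write $\Phi$ for the passage sending a torsion theory $\torsione$ satisfying Conditions (N) and (M) to the factorization system $(\facte,\factm)$ of Proposition \ref{Da torsione a sistema}, and $\Psi$ for the passage sending a factorization system $(\facte,\factm)$ — stable with respect to the arrows inverted by $\Z$, with every arrow of $\facte$ a $\zeros$-cokernel — to the torsion theory with $\tort=\{T\mid \exists\, T\to Z\in\facte,\ Z\in\zeros\}$ and $\torf=\{F\mid v_F\in\factm\}$. The one elementary observation I will use repeatedly is that \emph{for any arrow $u\colon X\to Z$ with $Z\in\zeros$ one has $K[u]\cong X$}: indeed $\Z(Z)=Z$ and $\varepsilon_Z=\mathrm{id}_Z$, so the $\zeros$-kernel of $u$, being the pullback of $\varepsilon_Z$ along $u$, is $\mathrm{id}_X$.

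First I would check $\Psi\Phi=\mathrm{id}$. Starting from $\torsione$ satisfying (N) and (M), form $(\facte,\factm)=\Phi\torsione$ and then $(\tort',\torf')=\Psi(\facte,\factm)$. For the torsion-free parts, $F\in\torf'$ means $v_F\colon F\to\V(F)\in\factm$, i.e.\ $K[v_F]\in\torf$; since $\V(F)\in\zeros$ the observation gives $K[v_F]\cong F$, hence $\torf'=\torf$. For the torsion parts: if $T\in\tort'$ there is $e\colon T\to Z$ in $\facte$ with $Z\in\zeros$, and by the definition of $\facte$ together with the observation $T\cong K[e]\in\tort$; conversely, if $T\in\tort$ then by Corollary \ref{dominio torsione} the $\zeros$-cokernel $q\colon T\to M$ of $\mathrm{id}_T$ exists, $M\in\zeros$ by Lemma \ref{coker id}, and $K[q]\cong T\in\tort$, so $q\in\facte$ exhibits $T\in\tort'$. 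Thus $\tort'=\tort$ and the two torsion theories coincide (alternatively, once $\torf'=\torf$ is established, $\tort'=\tort$ follows from Lemma \ref{torsioni uguali}).

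Next I would check $\Phi\Psi=\mathrm{id}$. Starting from $(\facte,\factm)$ as above, form $(\tort,\torf)=\Psi(\facte,\factm)$ — a torsion theory satisfying (N) and (M), as already proved — and then $(\facte',\factm')=\Phi(\tort,\torf)$, which is a genuine orthogonal factorization system by Proposition \ref{Da torsione a sistema}. I would establish the inclusions $\facte\subseteq\facte'$ and $\factm\subseteq\factm'$ and then invoke the standard fact that two orthogonal factorization systems comparable in this way must coincide: any $m'\in\factm'$ is right-orthogonal to every $e\in\facte\subseteq\facte'$, hence $m'\in\factm$, so $\factm=\factm'$, and $\facte=\facte'$ follows since each class is the orthogonal complement of the other. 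For $\facte\subseteq\facte'$: an $e\colon A\to B\in\facte$ is a $\zeros$-cokernel by hypothesis, and $K[e]$ is the pullback of the monomorphism $\varepsilon_B$ along $e$; since $\varepsilon_B$ is inverted by $\Z$, stability of $\facte$ makes the complementary projection $K[e]\to\Z(B)$ an arrow of $\facte$, so $K[e]\in\tort$ and $e\in\facte'$. For $\factm\subseteq\factm'$: given $m\colon I\to B\in\factm$, the projection $\chi_m\colon K[m]\to\Z(B)$ in the pullback square defining $\Zker(m)$ is a pullback of $m$, hence lies in $\factm$ because the right class of an orthogonal factorization system is pullback-stable; since $\zeros$ is terminal there is $\varphi\colon\Z(B)\to\V(K[m])$ with $\varphi\chi_m=v_{K[m]}$, and $\varphi\in\factm$ by Lemma \ref{zero frecce in M}, so $v_{K[m]}=\varphi\chi_m\in\factm$, i.e.\ $K[m]\in\torf$ and $m\in\factm'$.

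I do not expect an essential obstacle: the argument is careful bookkeeping around the canonical pullback squares that define $\zeros$-kernels. The hard part will be in the $\Phi\Psi=\mathrm{id}$ direction, where one must combine cleanly the two stability properties in play — stability of $\facte$ under pullback along $\Z$-inverted arrows, and the automatic pullback-stability of the right class $\factm$ — with the orthogonality argument that upgrades the two inclusions to equalities; one must also keep track of the fact that every $\zeros$-cokernel invoked genuinely exists, by Corollary \ref{dominio torsione} when the domain is torsion and by hypothesis for arrows in $\facte$.
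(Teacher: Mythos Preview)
Your proposal is correct. The first direction ($\Psi\Phi=\mathrm{id}$) is essentially the paper's argument: both identify $\torf'=\torf$ via the observation $K[v_F]\cong F$, and the paper then invokes Lemma~\ref{torsioni uguali} (you give this as an alternative after a direct argument for $\tort'=\tort$).

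In the second direction ($\Phi\Psi=\mathrm{id}$) you take a genuinely different route. The paper proves $\facte=\facte'$ directly by showing, for an arrow $e\colon A\to B$, the chain of equivalences $e\in\facte\iff z\in\facte\iff K[e]\in\tort\iff z\in\facte'\iff e\in\facte'$, where $z\colon K[e]\to\Z(B)$ is the top of the $\zeros$-kernel square; the key point is that this square is simultaneously a pullback and a pushout (Proposition~\ref{cokernelpushout}), so one can pass back and forth using pushout-stability of $\facte$ and $\facte'$ together with their pullback-stability along $\Z$-inverted arrows. Your argument instead establishes the two one-sided inclusions $\facte\subseteq\facte'$ and $\factm\subseteq\factm'$ and finishes by orthogonality. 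This avoids the pushout altogether: you only use pullback-stability of $\facte$ along $\Z$-inverted arrows (for the first inclusion) and the automatic pullback-stability of the right class $\factm$ (for the second), plus the composition $v_{K[m]}=\varphi\chi_m$ with $\varphi\in\factm$ by Lemma~\ref{zero frecce in M}. Your approach is a little more economical in hypotheses actually used; the paper's is more symmetric and makes explicit the bi-cartesian nature of the $\zeros$-kernel square, which is thematically relevant elsewhere in the paper.
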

	
	\begin{proof}
		On the one hand, let us start from a torsion theory $(\tort, \torf)$, consider the associated factorization system $(\facte, \factm)$ and the induced torsion theory $(\tort', \torf')$. If we consider an object $F$, since $K[F \rightarrow \V(F)]=F$, we have the chain of equivalent conditions: $F \in \torf$ if and only if $F \rightarrow \V(F)$ belongs to $\factm$ if and only if $F \in \torf'$. Therefore, $\torf = \torf '$ and so, by Lemma \ref{torsioni uguali}, $(\tort, \torf)=(\tort', \torf')$. On the other hand, take a factorization system $(\facte, \factm)$, consider the associated torsion theory $(\tort, \torf)$ and the induced factorization system $(\facte', \factm')$. For an arrow $e \colon A \rightarrow B$, observe that $e \in \facte $ if and only if $z \colon K[e] \rightarrow \Z(B)$ belongs to $\facte$. In fact, examine the diagram
		\[\begin{tikzcd}
			{K[e]} & {\Z(B)} \\
			A & B,
			\arrow["z", from=1-1, to=1-2]
			\arrow["k"', from=1-1, to=2-1]
			\arrow["\lrcorner"{anchor=center, pos=0}, draw=none, from=1-1, to=2-2]
			\arrow["{\varepsilon_B}", from=1-2, to=2-2]
			\arrow["e"', from=2-1, to=2-2]
			\arrow["\lrcorner"{anchor=center, pos=0, rotate=180}, draw=none, from=2-2, to=1-1]
		\end{tikzcd}\]
		which is both a pullback (it is constructed to compute the $\zeros$-kernel of $e$) and a pushout ($e$ is a $\zeros$-cokernel). Therefore, if $e \in \facte$, since $\varepsilon_B$ is inverted by $\Z$, we get $z \in \facte$; if $z \in \facte$, since the class $\facte$ is always pushout stable, we get $e \in \facte$. Thus, we have $e \in \facte$ if and only if $z \in \facte$. Moreover, we see that $z \in \facte$ if and only if $K[z]=K[e] \in \tort$, and this is equivalent to requiring $z \in \facte'$ (recall that $z=\Zcoker(id_{K[e]})$). To conclude, since $\facte'$ is pushout stable and pullback stable along arrows inverted by $\Z$, we get $z \in \facte'$ if and only if $e \in \facte'$. It is a general result that every factorization system is uniquely determined by one of its classes, and so $(\facte, \factm)=(\facte', \factm')$.
	\end{proof}
	
	\section{Torsion Theories and Galois Structures} \label{torsion th and Galois}
	Similarly to what we have seen about factorization systems, in this section we are interested in studying the connection between torsion theories and Galois structures in a non-pointed context. The goal is, once again, to generalize some of the results valid for pointed categories obtained in \cite{Gran torsione}.  With this purpose in mind, let us begin by recalling from \cite{janelidze1990pure} the basic definitions related to categorical Galois theory.
	
	\begin{defi}
		Let $\mathcal{C}$ be a category with pullbacks. A class $\mathbb{C}$ of arrows in $\mathcal{C}$ is admissible when:
		\begin{itemize}
			\item[(a)] every isomorphism is in $\mathbb{C}$;
			\item[(b)] $\mathbb{C}$ is closed under composition;
			\item[(c)] in the pullback
			\[\begin{tikzcd}
				\bullet & \bullet \\
				\bullet & \bullet
				\arrow["c", from=1-1, to=1-2]
				\arrow["a"', from=2-1, to=2-2]
				\arrow["d"', from=1-1, to=2-1]
				\arrow["b", from=1-2, to=2-2]
				\arrow["\lrcorner"{anchor=center, pos=0}, draw=none, from=1-1, to=2-2]
			\end{tikzcd}\]
			if $a$ and $b$ are in $\mathbb{C}$ then $c$ and $d$ are in $\mathbb{C}$, too.
		\end{itemize}
	\end{defi}
	
	\begin{defi}
		Let $\mathbb{C}$ be an admissible class of morphisms in a category $\mathcal{C}$. For an object $C$ of $\mathcal{C}$, we write $\mathbb{C}/C$ for the following category:
		\begin{itemize}
			\item[(a)] the objects are the pairs $(X,f)$ where $f \colon X \rightarrow C$ is in $\mathbb{C}$;
			\item[(b)] the arrows $h \colon (X,f) \rightarrow (Y,g)$ are all arrows in $\mathcal{C}$ such that $gh=f$.
		\end{itemize}
	\end{defi}
	
	\begin{defi}
		A \emph{relatively admissible adjunction} consists of an adjunction $\S \dashv \C \colon \mathcal{P} \rightarrow \mathcal{A}$ (where $\mathcal{A}$ and $\mathcal{P}$ have pullbacks) and two admissible classes $\mathbb{A} \subseteq \mathcal{A}$, $\mathbb{P} \subseteq \mathcal{P}$ of arrows, such that
		\begin{itemize}
			\item[(a)] $\S(\mathbb{A}) \subseteq \mathbb{P}$,
			\item[(b)] $\C(\mathbb{P}) \subseteq \mathbb{A}$,
			\item[(c)] for every object $A$ of $\mathcal{A}$ the component $\eta_A$ of the unit of the adjunction $\S \dashv \C$ is in $\mathbb{A}$,
			\item[(d)] for every object $X$ of $\mathcal{P}$ the component $\varepsilon_X$ of the counit of the adjunction $\S \dashv \C$ is in $\mathbb{P}$.
		\end{itemize}
		We denote a relatively admissible adjunction with $(\S,\C,\mathbb{A},\mathbb{P})$.
	\end{defi}
	
	In order to recall the notion of an admissible adjunction, remaining consistent with the notation of the previous definitions, we review the definition of the following functors: $\S_A \colon \mathbb{A}/A \rightarrow \mathbb{P}/\S(A)$ and $\C_A \colon \mathbb{P}/\S(A) \rightarrow \mathbb{A}/A$ where $A$ is an object of $\mathcal{A}$. One has $\S_A(f \colon B \rightarrow A)  \coloneqq \S(f)$ and $\C_A(g \colon P \rightarrow \S(A)) \coloneqq \pi_A$ where $\pi_A$ is the upper morphism in the following pullback diagram:
	\[\begin{tikzcd}
		K & A \\
		{\C(P)} & {\C\S(A)}
		\arrow["{\C(g)}"', from=2-1, to=2-2]
		\arrow["{\eta_A}", from=1-2, to=2-2]
		\arrow["{\pi_A}", from=1-1, to=1-2]
		\arrow["{\pi_{\C(P)}}"', from=1-1, to=2-1]
		\arrow["\lrcorner"{anchor=center, pos=0}, draw=none, from=1-1, to=2-2]
	\end{tikzcd}\]
	($\eta_A$ is the component in $A$ of the unit of the adjunction $\S \dashv \C$). It can be proved that $\S_A$ is left adjoint to $\C_A$.
	
	\begin{defi}
		A relatively admissible adjunction $(\S,\C,\mathbb{A},\mathbb{P})$ is \emph{admissible} when the functor $\C_A$ is full and faithful for every object $A$ of $\mathcal{A}$.
	\end{defi}
	
	Now, we recall the definition of effective descent morphism, as this concept will be of crucial importance for the detailed analysis of admissible Galois structures.
	
	\begin{defi}
		Let $\mathbb{A}$ be an admissible class of arrows in a category $\mathcal{A}$ with pullbacks. An arrow $h \colon A \rightarrow B$ is an \emph{effective descent} morphism relatively to $\mathbb{A}$ if:
		\begin{itemize}
			\item[(a)] $h \in \mathbb{A}$;
			\item[(b)] the change-of-base functor $h^* \colon \mathbb{A}/B \rightarrow \mathbb{A}/A$ is monadic.
		\end{itemize}
	\end{defi}
	
	It has been shown that, in Barr-exact categories, the effective descent morphisms relatively to the class of all morphisms are precisely the regular epimorphisms.\\
	
	The notion of effective descent morphism enables the study of specific classes of arrows with respect to an admissible Galois structure.
	
	\begin{defi}
		Given a relatively admissible adjunction $(\S,\C,\mathbb{A},\mathbb{P})$,
		\begin{itemize}
			\item[(a)] a \emph{trivial extension} is an arrow $f \colon A \rightarrow B$ of $\mathbb{A}$ such that the square
			\[\begin{tikzcd}
				A & {\C\S(A)} \\
				B & {\C\S(B)}
				\arrow["f"', from=1-1, to=2-1]
				\arrow["{\C\S(f)}", from=1-2, to=2-2]
				\arrow["{\eta_A}", from=1-1, to=1-2]
				\arrow["{\eta_B}"', from=2-1, to=2-2]
				\arrow["\lrcorner"{anchor=center, pos=0}, draw=none, from=1-1, to=2-2]
			\end{tikzcd}\]
			is a pullback (where $\eta$ is the unit of the adjunction $\S \dashv \C$);
			\item[(b)] a \emph{normal extension} is an arrow $f$ of $\mathbb{A}$ such that it is an effective descent morphism relatively to $\mathbb{A}$ and its kernel pair projections are trivial extensions;
			\item[(c)] a \emph{central extension} is an arrow $f$ of $\mathbb{A}$ such that there exists an effective descent morphism $g$ relatively to $\mathbb{A}$ such that the pullback $g^*(f)$ of $f$ along $g$ is a trivial extension.
		\end{itemize}
	\end{defi}
	
	We are now ready to state and prove the main results of this section. To ensure these, it will not be required to assume that the class $\zeros$ is terminal. However, just as in the case of the previous section, it will be necessary to introduce a property of torsion theories related to the behavior of the class $\zeros$ with respect to certain exact sequences.
	
	\begin{defi}
		A torsion theory $\torsione$ satisfies \emph{Condition (M')} if, for every short exact sequence
		\[\begin{tikzcd}
			\T(A) & A & \F(A),
			\arrow["t_A", from=1-1, to=1-2]
			\arrow["\eta_A", from=1-2, to=1-3]
		\end{tikzcd}\]
		we have $\F(\T(A))=\Z(\F(A))$.
	\end{defi}
	
	Clearly, Condition (M) implies Condition (M').
	
	\begin{prop}\label{prop 1 torsione riflessione}
		Let $\mathcal{C}$ be a regular category with a fixed class $\zeros$ of zero objects. Consider a full, replete, reflective subcategory $\torf$ of $\mathcal{C}$, with inclusion $\i \colon \torf \hookrightarrow \mathcal{C}$ and reflector $\F \colon \mathcal{C} \rightarrow \torf$, such that every component $\eta_A$ of the unit of the adjunction $\F \dashv \i$ is a $\zeros$-cokernel, $\F(Z)=Z$ for every $Z \in \zeros$, and $\F(f^*(\eta_B))$ is an isomorphism for every arrow $f \colon A \rightarrow \F(B)$ of $\torf$ inverted by $\Z$. Then, $\torf$ is the torsion-free part of a torsion theory $\torsione$ satisfying Condition (M') and such that $\tort \cap \torf = \zeros$.
	\end{prop}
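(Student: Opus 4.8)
The plan is to define the torsion subcategory as $\tort \coloneqq \{ T \in \mathcal{C} \mid \F(T) \in \zeros \}$; this is a full, replete subcategory of $\mathcal{C}$ since $\F$ is a functor and $\zeros$ is replete. I will then verify conditions (a), (b), (c) of Definition~\ref{definizione preesatta} together with Condition (M'). A first remark: from $\F(Z) = Z$ for all $Z \in \zeros$ together with repleteness of $\torf$ one gets $\zeros \subseteq \torf$; in particular $\Z(\F(A)) \in \torf$ for every object $A$ of $\mathcal{C}$.

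Condition (a) is quick. If $X \in \tort \cap \torf$, then $\eta_X$ is an isomorphism (because $X \in \torf$), so $X \cong \F(X) \in \zeros$ and hence $X \in \zeros$; conversely, if $Z \in \zeros$ then $\F(Z) = Z \in \zeros$ shows $Z \in \tort$, while $Z \in \torf$ by the remark above. Condition (b) is also immediate: given $f \colon T \to F$ with $T \in \tort$ and $F \in \torf$, the universal property of the reflection factors $f$ through $\eta_T \colon T \to \F(T)$, and $\F(T) \in \zeros$, so $f \in \zideal$.

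For (c), I would assign to each object $A$ the sequence $K[\eta_A] \xrightarrow{k_A} A \xrightarrow{\eta_A} \F(A)$, where $k_A \coloneqq \Zker(\eta_A)$; this $\zeros$-kernel exists (as recalled in Section~\ref{preliminaries}) and is computed as the pullback of $\varepsilon_{\F(A)} \colon \Z(\F(A)) \rightarrowtail \F(A)$ along $\eta_A$. It is short $\zeros$-exact: $k_A = \Zker(\eta_A)$ by construction, while $\eta_A = \Zcoker(k_A)$ because $\eta_A$ is a $\zeros$-cokernel by hypothesis and every $\zeros$-cokernel is the $\zeros$-cokernel of its $\zeros$-kernel; moreover $\F(A) \in \torf$ trivially. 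The real content is to prove $K[\eta_A] \in \tort$, i.e. $\F(K[\eta_A]) \in \zeros$ — and this will simultaneously yield Condition (M'). The key observation is that the very pullback square defining $K[\eta_A]$ exhibits its projection $\chi \colon K[\eta_A] \to \Z(\F(A))$ as $\varepsilon_{\F(A)}^{*}(\eta_A)$, the pullback of the unit $\eta_A$ along $\varepsilon_{\F(A)}$. Since $\Z(\F(A)) \in \zeros \subseteq \torf$ and $\varepsilon_{\F(A)}$ is a monomorphism, hence inverted by $\Z$, the third hypothesis of the statement applies and gives that $\F(\chi) \colon \F(K[\eta_A]) \to \F(\Z(\F(A))) = \Z(\F(A))$ is an isomorphism. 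Therefore $\F(K[\eta_A]) \cong \Z(\F(A)) \in \zeros$, so $K[\eta_A] \in \tort$; and since we set $\T(A) = K[\eta_A]$, this also reads $\F(\T(A)) = \Z(\F(A))$, which is precisely Condition (M'). Finally, because the right-hand map of the chosen sequence is $\eta_A$ for every $A$, the torsion-free part of the resulting torsion theory consists of the objects $F$ on which $\eta_F$ is invertible, namely the objects of $\torf$.

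I expect the only genuinely delicate point to be this last computation of $\F$ on a $\zeros$-kernel: one has to notice that the third hypothesis, although phrased as a (relative) semi-left-exactness condition for pullbacks of the units $\eta_B$, is exactly what is needed here, since the $\zeros$-kernel of $\eta_A$ is itself a pullback of $\eta_A$ — taken along the monomorphism $\varepsilon_{\F(A)}$, which is automatically inverted by $\Z$. Everything else is a routine check against Definition~\ref{definizione preesatta} and the preliminary results of Section~\ref{preliminaries}.
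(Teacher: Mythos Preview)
Your proof is correct and follows essentially the same line as the paper's: both hinge on recognizing that the $\zeros$-kernel of $\eta_A$ is the pullback of $\eta_A$ along $\varepsilon_{\F(A)}$, so that the semi-left-exactness hypothesis forces $\F(K[\eta_A]) \cong \Z(\F(A))$, yielding simultaneously $K[\eta_A] \in \tort$ and Condition (M'). The only cosmetic difference is that the paper defines $\tort$ as $\{K[\eta_X] \mid X \in \mathcal{C}\}$ rather than your $\{T \mid \F(T) \in \zeros\}$; these coincide (if $\F(T) \in \zeros$ then $\varepsilon_{\F(T)}$ is an isomorphism, so $K[\eta_T] = T$), and your formulation makes repleteness and condition (b) slightly more transparent.
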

	
	\begin{proof}
		We consider the full, replete subcategory $\tort$ whose objects are $\tort \coloneqq \{T \in \mathcal{C} \,|\, T=K[\eta_X] \text{ for some } X \in \mathcal{C} \}$. First of all, we prove that $\tort \cap \torf = \mathcal{Z}$. On the one hand, since $K[\eta_Z]=K[id_Z]=Z$ for every $Z \in \mathcal{Z}$, we have $\mathcal{Z} \subseteq \tort \cap \torf$. On the other hand, consider an object $S \in \tort \cap \torf$. Since $S \in \tort$, we have the pullback
		\[\begin{tikzcd}
			S & {\Z(\F(X))} \\
			X & {\F(X).}
			\arrow["w", from=1-1, to=1-2]
			\arrow[from=1-1, to=2-1]
			\arrow["\lrcorner"{anchor=center, pos=0}, draw=none, from=1-1, to=2-2]
			\arrow["{\varepsilon_{\F(X)}}", tail, from=1-2, to=2-2]
			\arrow["{\eta_X}"', from=2-1, to=2-2]
		\end{tikzcd}\]
		We observe that $\varepsilon_{\F(X)} \in \Arr(\torf)$ (since its domain and its codomain belong to $\torf$) and so, by assumption, $\F(w)$ is an isomorphism ($w$ is the pullback of $\eta_X$ along an arrow of $\torf$ inverted by $\Z$). But $S \in \torf$, so $\F(w)=w$. This implies $S \in \zeros$.\\
		As second step, we prove that every arrow $f \colon T \rightarrow F$, with $T \in \tort$ and $F \in \torf$, is trivial. Due to naturality of $\eta$, we know that the following diagram is commutative:
		\[\begin{tikzcd}
			T & {\F(T)} \\
			F & F.
			\arrow["{\eta_T}", from=1-1, to=1-2]
			\arrow["f"', from=1-1, to=2-1]
			\arrow["{\F(f)}", from=1-2, to=2-2]
			\arrow["{\eta_F=id_F}"', equal, from=2-1, to=2-2]
		\end{tikzcd}\]
		We show that $\F(T) \in \zeros$: recalling that $T=K[\eta_X]$ for some $X$, we consider the pullback
		\[\begin{tikzcd}
			T & {\Z(\F(X))} \\
			X & {\F(X);}
			\arrow["l", from=1-1, to=1-2]
			\arrow[from=1-1, to=2-1]
			\arrow["\lrcorner"{anchor=center, pos=0}, draw=none, from=1-1, to=2-2]
			\arrow["{\varepsilon_{\F(X)}}", tail, from=1-2, to=2-2]
			\arrow["{\eta_X}"', from=2-1, to=2-2]
		\end{tikzcd}\]
		hence, by assumptions, $\F(l)$ is an isomorphism and then $\F(T) \in \zeros$.\\
		Trivially, for every object $A$ the short exact sequence $K[\eta_A] \xrightarrow{k} A \xrightarrow{\eta_A} \F(A)$ is such that $K[\eta_A] \in \tort$ and $\F(A) \in \torf$.\\
		To conclude, we prove that $\torsione$ satisfies Condition (M'). Consider the pullback
		\[\begin{tikzcd}
			{\T(X)} & {\Z(\F(X))} \\
			X & {\F(X)}
			\arrow["\chi", from=1-1, to=1-2]
			\arrow["{t_X}"', from=1-1, to=2-1]
			\arrow["\lrcorner"{anchor=center, pos=0}, draw=none, from=1-1, to=2-2]
			\arrow["{\varepsilon_{\F(X)}}", tail, from=1-2, to=2-2]
			\arrow["{\eta_X}"', from=2-1, to=2-2]
		\end{tikzcd}\]
		and observe that $\F(\chi)$ is an isomorphism ($\chi$ is the pullback of $\eta_X$ along $\varepsilon_{\F(X)}$); therefore, $\F\T(X) \cong \Z\F(X)$.
	\end{proof}
	
	\begin{prop}\label{prop 2 torsione e riflessione}
		Let $\mathcal{C}$ be a regular protomodular category with a fixed class $\zeros$ of zero objects. Consider a torsion theory $\torsione$ in $\mathcal{C}$ satisfying Condition (M'). Then, every component of the unit $\eta$ of the reflector $\F$ is a $\zeros$-cokernel, $\F(f^*(\eta_B))$ is an isomorphism for every $f\colon A \rightarrow \F(B)$ of $\torf$ inverted by $\Z$, and $\F(Z)=Z$ for every $Z \in \zeros$.
	\end{prop}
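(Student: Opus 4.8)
The three assertions are of very unequal depth, and the plan is to settle the first and third at once and then concentrate on the second.

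For the first assertion, by Definition \ref{definizione preesatta}(c) the chosen short $\zeros$-exact sequence of an object $A$ is $\T(A) \xrightarrow{t_A} A \xrightarrow{\eta_A} \F(A)$, in which $\eta_A$ is by definition the $\zeros$-cokernel of $t_A$; since by [\cite{Facchini2}, Proposition 3.3(a)] this very $\eta$ is the unit of the reflector $\F$, every unit component is a $\zeros$-cokernel. For the third assertion, recall observation (a) following [\cite{Facchini2}, Proposition 3.3]: the chosen short $\zeros$-exact sequence of an object $Z \in \zeros = \tort \cap \torf$ is $Z \xrightarrow{id_Z} Z \xrightarrow{id_Z} Z$, whence $\F(Z) = Z$.

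The work lies in the second assertion. Fix $f \colon A \to \F(B)$ with $A \in \torf$ and $\Z(f)$ an isomorphism, form the pullback

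\[\begin{tikzcd}
	P & B \\
	A & {\F(B),}
	\arrow["{\overline{f}}", from=1-1, to=1-2]
	\arrow["p"', from=1-1, to=2-1]
	\arrow["\lrcorner"{anchor=center, pos=0}, draw=none, from=1-1, to=2-2]
	\arrow["{\eta_B}", from=1-2, to=2-2]
	\arrow["f"', from=2-1, to=2-2]
\end{tikzcd}\]
so that $p = f^*(\eta_B)$, and write $k_p = \Zker(p)$. First I would identify $K[p]$: by [\cite{Cappelletti}, Proposition 4.2] $k_p$ is the pullback of $\varepsilon_A$ along $p$, so pasting this square below the one above exhibits $K[p]$ as the pullback of $B \xrightarrow{\eta_B} \F(B) \xleftarrow{f\varepsilon_A} \Z(A)$; by naturality of $\varepsilon$ one has $f\varepsilon_A = \varepsilon_{\F(B)}\Z(f)$, and since $\Z(f)$ is an isomorphism this pullback is isomorphic to the pullback of $B \xrightarrow{\eta_B} \F(B) \xleftarrow{\varepsilon_{\F(B)}} \Z(\F(B))$, i.e.\ to $\Zker(\eta_B) = \T(B) \in \tort$. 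Next I would check that, as a subobject of $P$, $K[p]$ is the same as $t_P \colon \T(P) \rightarrowtail P$: indeed $p\,t_P \in \zideal$ because $\T(P) \in \tort$, $A \in \torf$ (Definition \ref{definizione preesatta}(b)), so $t_P$ factors through $k_p = \Zker(p)$; conversely $\eta_P k_p \in \zideal$ because $K[p] \cong \T(B) \in \tort$, $\F(P) \in \torf$, so $k_p$ factors through $t_P = \Zker(\eta_P)$; these two factorizations are mutually inverse, hence $k_p$ and $t_P$ represent the same subobject. Therefore $\eta_P = \Zcoker(t_P) = \Zcoker(k_p)$, and since $p\,k_p \in \zideal$ there is a unique $\overline{p} \colon \F(P) \to A$ with $\overline{p}\,\eta_P = p$; by naturality of $\eta$ (and $\F(A) = A$, $\eta_A = id_A$) this $\overline{p}$ is precisely $\F(p)$. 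Finally $\eta_B$, being a $\zeros$-cokernel, is a regular epimorphism, so its pullback $p$ is a regular epimorphism, and hence so is $\overline{p}$.

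The step I expect to be the real obstacle is showing that $\overline{p}$ is a monomorphism; here protomodularity and Condition (M') come in. The plan is to apply [\cite{Cappelletti}, Lemma 5.4] to the diagram

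\[\begin{tikzcd}
	{K[p]} & P & {\F(P)} \\
	{K[p]} & P & A
	\arrow["{k_p}", from=1-1, to=1-2]
	\arrow[equal, from=1-1, to=2-1]
	\arrow["{\eta_P}", from=1-2, to=1-3]
	\arrow[equal, from=1-2, to=2-2]
	\arrow["{\overline{p}}", from=1-3, to=2-3]
	\arrow["{k_p}", from=2-1, to=2-2]
	\arrow["p", from=2-2, to=2-3]
\end{tikzcd}\]
whose rows are exact in the sense of [\cite{Cappelletti}] ($k_p = \Zker(\eta_P)$ by the identification above and $\eta_P$ a regular epimorphism; $k_p = \Zker(p)$ and $p$ a regular epimorphism) and whose left-hand square is trivially a pullback. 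What still has to be fed in is the hypothesis $\Z(\F(P)) \cong \Z(A)$, and this is exactly where Condition (M') is used: applied to $P$ it gives $\Z(\F(P)) = \F(\T(P))$, applied to $B$ it gives $\F(\T(B)) = \Z(\F(B))$, and since $\T(P) \cong K[p] \cong \T(B)$ while $\Z(f) \colon \Z(A) \xrightarrow{\;\sim\;} \Z(\F(B))$, we obtain $\Z(\F(P)) \cong \Z(A)$ — the connecting isomorphism being $\Z(\overline{p})$ because $\zeros$ is posetal. Lemma 5.4 then gives that $\overline{p}$ is a monomorphism; being at once a regular epimorphism and a monomorphism, $\overline{p} = \F(f^*(\eta_B))$ is an isomorphism, which is what was wanted.
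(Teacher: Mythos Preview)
Your proof is correct and reaches the same conclusion by a route that differs in packaging rather than in substance from the paper's. Both arguments start by forming the pullback and identifying $K[p]\cong\T(B)\in\tort$; from there the paper proceeds by showing directly that $p$ is a $\zeros$-cokernel (via the protomodular ``pullback along a mono is a pushout'' result of Bourn, together with Condition (M') to identify $\F(\T(B))$ with $\Z(A)$), so that $K[p]\to P\to A$ is a short $\zeros$-exact sequence with torsion left and torsion-free right endpoints, and then invokes the uniqueness of such sequences to conclude that the comparison with $\T(P)\to P\to\F(P)$ is an isomorphism. You instead first establish $k_p=t_P$ as subobjects, obtain the comparison $\overline{p}=\F(p)$, and then verify separately that $\overline{p}$ is a regular epimorphism (easy) and a monomorphism (via [\cite{Cappelletti}, Lemma 5.4], again using (M') to get $\Z(\F(P))\cong\Z(A)$). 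The paper's argument is a bit shorter because the uniqueness of short $\zeros$-exact sequences absorbs the mono/epi verification into a single step; your argument makes the role of the Five-Lemma-type result more explicit and avoids quoting back to the proof of Proposition~\ref{Da torsione a sistema}. Either way, the essential inputs are the same: regularity for the pullback-stability of $\eta_B$, Condition (M') to control the zero parts, and protomodularity (hidden either in Bourn's result or in Lemma 5.4).
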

	
	\begin{proof}
		The only non-trivial statement we need to prove is that $\F(f^*(\eta_B))$ is an isomorphism if $f$ is an arrow of $\torf$ inverted by $\Z$. So, consider the pullback
		\[\begin{tikzcd}
			{A'} & A \\
			B & {\F(B).}
			\arrow["{f''}", from=1-1, to=1-2]
			\arrow["{f'}"', from=1-1, to=2-1]
			\arrow["\lrcorner"{anchor=center, pos=0}, draw=none, from=1-1, to=2-2]
			\arrow["f", from=1-2, to=2-2]
			\arrow["{\eta_B}"', from=2-1, to=2-2]
		\end{tikzcd}\]
		Since $\Z(f)$ is an isomorphism, it follows that $K[f'']=K[\eta_B] \in \tort$ (thanks to \cite[Proposition 4.3]{Cappelletti}). Hence, $f''$ is a $\zeros$-cokernel (this last fact can be proved in the same way as showing that the class $\facte$ is stable under pullback along arrows inverted by the functor $\Z$, see Proposition \ref{Da torsione a sistema}). Now, consider the commutative diagram
		\[\begin{tikzcd}
			{\T(A')} & {A'} && {\F(A')} \\
			&& A,
			\arrow["{t_{A'}}", from=1-1, to=1-2]
			\arrow["{\eta_{A'}}", from=1-2, to=1-4]
			\arrow["{f''}"', from=1-2, to=2-3]
			\arrow["{f'''}", from=1-4, to=2-3]
		\end{tikzcd}\]
		where $f'''$ is induced by the universal property of $\eta_{A'}$ as $\zeros$-cokernel of $t_{A'}$, since $f''t_{A'} \in \zideal$ (it is an arrow with torsion domain and torsion-free codomain). Hence, due to the uniqueness up to isomorphisms of the short exact sequence $\T(A') \xrightarrow{t_{A'}} A' \xrightarrow{\eta_{A'}} \F(A')$, we have that $f'''$ is an isomorphism. To conclude, we observe that $\F(f'')=\F(f''' \eta_{A'})=f'''$ (observing that $\F(f''')=f'''$, since it is an arrow of $\torf$).
	\end{proof}
	
	\begin{lem}\label{F lemma 1}
		Let $\mathcal{C}$ be a regular category and $\torf \hookrightarrow \mathcal{C}$ a full, replete, (regular epi)-reflective subcategory. Then $\torf$ is closed under subobjects in $\mathcal{C}$.
	\end{lem}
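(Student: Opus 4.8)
The plan is to prove the stronger statement that for every monomorphism $m \colon S \rightarrowtail F$ with $F \in \torf$, the unit component $\eta_S \colon S \to \F(S)$ is already an isomorphism; since $\torf$ is replete, this immediately gives $S \in \torf$. This is the usual argument for closure of a regular epi-reflective subcategory under subobjects.

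First I would use the universal property of the reflection $\F \dashv \i$. Identifying $\torf$ with its image full subcategory, the morphism $m \colon S \to F$ with $F \in \torf$ factors uniquely through the unit as $m = \bar m \, \eta_S$ for some $\bar m \colon \F(S) \to F$. Since $m$ is a monomorphism and it factors through $\eta_S$, the arrow $\eta_S$ is itself a monomorphism. On the other hand, since $\torf$ is regular epi-reflective, $\eta_S$ is a regular epimorphism.

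It then remains to invoke the elementary categorical fact that an arrow which is simultaneously a regular epimorphism and a monomorphism is an isomorphism: if $\eta_S$ is the coequalizer of a pair $u, v \colon C \rightrightarrows S$, then $\eta_S u = \eta_S v$ together with $\eta_S$ monic forces $u = v$, so $\mathrm{id}_S$ coequalizes $u, v$ and factors as $h \, \eta_S = \mathrm{id}_S$ for some $h$; post-composing with $\eta_S$ and using that $\eta_S$ is epic gives $\eta_S h = \mathrm{id}_{\F(S)}$. Hence $\eta_S$ is an isomorphism, so $S \cong \F(S) \in \torf$, and $S \in \torf$ by repleteness. I do not anticipate a real obstacle; the only point deserving a moment's care is the regular-epi-plus-mono-implies-iso step, and it is worth remarking that regularity of $\mathcal{C}$ itself is not actually needed here — the whole content is carried by the hypothesis that the reflection is regular epi-reflective.
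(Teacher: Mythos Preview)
Your proof is correct and follows essentially the same strategy as the paper: show that $\eta_S$ is a monomorphism, conclude it is an isomorphism since it is already a regular epimorphism. The difference is in how monicity of $\eta_S$ is obtained. The paper uses the (regular epi, mono)-factorization available in the regular category $\mathcal{C}$: writing $\F(s)=me$ and observing that $s = m(e\eta_S)$ with $m$ mono and $e\eta_S$ a composite of regular epis (hence regular epi, by regularity of $\mathcal{C}$), uniqueness of the factorization of the mono $s$ forces $e\eta_S$ to be an isomorphism, whence $\eta_S$ is mono. You instead factor $m=\bar m\,\eta_S$ directly via the reflection's universal property and read off that $\eta_S$ is mono from the monicity of $m$. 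Your route is shorter and, as you note, does not use the regularity hypothesis on $\mathcal{C}$ at all; the paper's argument, by contrast, genuinely invokes the regular (epi, mono)-factorization and the stability of regular epis under composition.
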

	
	\begin{proof}
		Consider the commutative diagram
		\[\begin{tikzcd}
			S && A \\
			{\F(S)} && {\F(A)} \\
			& I,
			\arrow["s", tail, from=1-1, to=1-3]
			\arrow["{\eta_S}"', two heads, from=1-1, to=2-1]
			\arrow[equal, from=1-3, to=2-3]
			\arrow["{\F(s)}"', from=2-1, to=2-3]
			\arrow["e"', two heads, from=2-1, to=3-2]
			\arrow["m"', tail, from=3-2, to=2-3]
		\end{tikzcd}\]
		where the rectangle commutes since $\eta$ is a natural transformation and $\F(s)=me$ is the (regular epi, mono)-factorization of $\F(s)$. Since $e\eta_S$ is a regular epimorphism and $me\eta_S=s$ we conclude that $e \eta_S$ is an isomorphism (due to the uniqueness of the (regular epi, mono)-factorization). So, $\eta_S$ is both a monomorphism and a regular epimorphism.
	\end{proof}
	
	\begin{lem}\label{F lemma 2}
		Let $\mathcal{C}$ be a protomodular category with pullbacks and a fixed class $\zeros$ of zero objects. Let $\F \colon \mathcal{C} \rightarrow \torf$ be the reflector into a full, replete, (regular epi)-reflective subcategory $\torf$. Suppose, moreover, that the reflector is semi-left exact (i.e.\ $\F(f^*(\eta_X))$ is an isomorphism for every component of the unit of the adjunction $\eta_X \colon X \rightarrow \F(X)$ and for every arrow $f \colon F \rightarrow \F(X)$ of $\torf$). Hence, for every pullback
		\[\begin{tikzcd}
			F & X \\
			{Z_1} & {Z_2,}
			\arrow["\theta", from=1-1, to=1-2]
			\arrow["a"', from=1-1, to=2-1]
			\arrow["\lrcorner"{anchor=center, pos=0}, draw=none, from=1-1, to=2-2]
			\arrow["b", from=1-2, to=2-2]
			\arrow["z"', from=2-1, to=2-2]
		\end{tikzcd}\]
		if $F \in \torf$ and $Z_1,Z_2 \in \zeros$, then $X \in \torf$.
	\end{lem}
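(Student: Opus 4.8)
The plan is to prove that the unit $\eta_X \colon X \to \F(X)$ is an isomorphism; since $\torf$ is replete this gives $X \in \torf$. Because $\torf$ is (regular epi)-reflective, $\eta_X$ is a regular epimorphism, so it suffices to show that $\eta_X$ is a monomorphism. Throughout I would identify $\F(F)$ with $F$ (legitimate since $F \in \torf$, so $\eta_F$ is invertible); naturality of $\eta$ at $\theta \colon F \to X$ then reads $\F(\theta) = \eta_X \theta \colon F \to \F(X)$, and in particular $\F(\theta)$ is a morphism of $\torf$.

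The key step is to form the pullback of $\F(\theta)$ along $\eta_X$, with projections $p \colon P \to X$ and $q \colon P \to F$, so that $\eta_X p = \F(\theta) q$. Since $\eta_X \theta = \F(\theta) = \F(\theta) \, id_F$, the identity of $F$ and $\theta$ induce, via the universal property of the pullback, a morphism $s \colon F \to P$ with $qs = id_F$ and $ps = \theta$; thus $q$ is a split epimorphism with section $s$. Now $p$ is exactly $\eta_X^{*}(\F(\theta))$, the pullback of the $\torf$-morphism $\F(\theta)$ along the unit $\eta_X$, so the stable-units hypothesis gives that $\F(p)$ is an isomorphism. Applying $\F$ to $qs = id_F$ and $ps = \theta$ yields $\F(q)\F(s) = id_F$ and $\F(p)\F(s) = \F(\theta)$; hence $\F(\theta) = \F(p)\F(s)$ has $\F(q)\F(p)^{-1}$ as a left inverse, so $\F(\theta)$ is a split monomorphism. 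Since $\F(\theta) = \eta_X \theta$ then has a left inverse, so does $\theta$; in particular $\theta$ is a monomorphism.

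It remains to note that $\theta$ is also a regular epimorphism. The arrow $z \colon Z_1 \to Z_2$ has codomain in $\zeros$, hence (as recalled just after Proposition~\ref{cokernelpushout}) is a strong, in particular a regular, epimorphism; and $\theta$ is the pullback of $z$ along $b$, so $\theta$ is a regular epimorphism as well (regular epimorphisms being stable under pullback in the regular context at hand). A morphism that is simultaneously a regular epimorphism and a monomorphism is an isomorphism, so $\theta$ is an isomorphism. Consequently $\eta_X = \F(\theta)\theta^{-1}$ is the composite of the split monomorphism $\F(\theta)$ with an isomorphism, hence a monomorphism; being also a regular epimorphism it is an isomorphism, whence $X \cong \F(X) \in \torf$ and so $X \in \torf$.

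The one point that requires care is the construction in the second paragraph: one must realise that the right auxiliary object is the pullback of $\F(\theta)$ along $\eta_X$ — so that its projection onto $X$ is literally $\eta_X^{*}$ of a $\torf$-morphism and the stable-units hypothesis applies verbatim — and then read off from the section $s$ (produced by naturality) that $\F(\theta)$ is a split monomorphism. The rest is formal bookkeeping with split monomorphisms and regular epimorphisms; protomodularity is not explicitly invoked in this argument.
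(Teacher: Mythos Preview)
There is a genuine gap: you have misapplied the stable-units hypothesis. The condition ``stable units'' (as used throughout the paper, cf.\ Propositions~\ref{prop 1 torsione riflessione} and~\ref{prop 2 torsione e riflessione}, and as standard in the literature) says that $\F$ inverts pullbacks \emph{of} the unit along $\torf$-morphisms, i.e.\ $\F\bigl(f^{*}(\eta_X)\bigr)$ is an isomorphism --- not $\F\bigl(\eta_X^{*}(f)\bigr)$. The parenthetical in the lemma's statement appears to have the arguments transposed. In your pullback square, the arrow inverted by $\F$ is therefore $q\colon P\to F$ (the pullback of $\eta_X$ along $\F(\theta)$), not $p\colon P\to X$. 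With only $\F(q)$ invertible, your chain of equalities gives $\F(\theta)=\F(p)\F(q)^{-1}$, which does not exhibit $\F(\theta)$ as a split monomorphism, and the argument collapses. Indeed your reading would force every $\torf$-morphism into an $\torf$-object $X$ to be an isomorphism (take $X\in\torf$, so $\eta_X=\mathrm{id}$ and $\eta_X^{*}(f)=f$).

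More tellingly, your conclusion that $\theta$ is an isomorphism is simply false in the paper's own examples: in $\coslice$ (Section~5.5) take $Z_1=(\Zm,1)$, $Z_2=(\Zh,1)$ with $h\mid m$, $h\neq m$, and $X=Z_2$; then $F\cong Z_1\in\zeros\subseteq\torf$, but $\theta\colon Z_1\to Z_2$ is not a monomorphism. So the split-mono claim for $\theta$ cannot hold in general. (Your regular-epi step is also unjustified, since the lemma assumes only protomodularity with pullbacks, not regularity; but this is secondary.) The paper's proof proceeds differently: it introduces the intermediate object $F'=\F(X)\times_{Z_2}Z_1$, which lies in $\torf$ as a limit of $\torf$-objects; then $F$ is the pullback of $\eta_X$ along $\tilde z\colon F'\to\F(X)$, stable units makes $\F(F\to F')$ an isomorphism, and since $F,F'\in\torf$ the map $F\to F'$ itself is an isomorphism. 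Finally, protomodularity --- specifically that pullback functors reflect monomorphisms --- yields that $\eta_X$ is a monomorphism, hence an isomorphism. Note that protomodularity is essential here; your remark that it ``is not explicitly invoked'' should itself have been a warning sign.
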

	
	\begin{proof}
		Consider the commutative diagram
		\[\begin{tikzcd}
			F & X \\
			{F'} & {\F(X)} \\
			{Z_1} & {Z_2,}
			\arrow["\theta", from=1-1, to=1-2]
			\arrow["f"', from=1-1, to=2-1]
			\arrow["\lrcorner"{anchor=center, pos=0}, draw=none, from=1-1, to=2-2]
			\arrow["a"', curve={height=20pt}, from=1-1, to=3-1]
			\arrow["{\eta_X}", two heads, from=1-2, to=2-2]
			\arrow["{\tilde{z}}", from=2-1, to=2-2]
			\arrow["c"', from=2-1, to=3-1]
			\arrow["\lrcorner"{anchor=center, pos=0}, draw=none, from=2-1, to=3-2]
			\arrow["{\F(b)}", from=2-2, to=3-2]
			\arrow["z"', from=3-1, to=3-2]
		\end{tikzcd}\]
		where the two squares are pullbacks. Clearly $c, \tilde{z} \in \Arr(\torf)$ as pullbacks of arrows of $\torf$. Since the reflection is semi-left exact, $\F(f)$ is an isomorphism. Moreover, since in a protomodular category pullbacks reflect monomorphisms (see e.g.\ \cite[Lemma 3.1.20]{Bbbook}), $\eta_X$ is a monomorphism. Therefore, $\eta_X$ is an isomorphism and $X \in \torf$.
	\end{proof}
	
	The following definition is a generalization to our non-pointed context of the notion of a \emph{protoadditive} functor between pointed protomodular categories (introduced in \cite{everaert2010homology}).
	
	\begin{defi}
		Let $\mathcal{C}$ be a protomodular category with pullbacks and a fixed class $\zeros$ of zero objects. A functor $\F \colon \mathcal{C} \rightarrow \torf$, where $\torf$ is a protomodular category, is said to be \emph{$\mathcal{Z}$-protoadditive} (or simply protoadditive if there is no ambiguity) if $\F$ preserves the pullbacks of split epimorphisms along arrows inverted by $\Z$.
	\end{defi}
	
	\begin{defi}
		Let $\mathcal{C}$ be a protomodular category with pullbacks and a fixed class $\zeros$ of zero objects. We say that a torsion theory $\torsione$ in $\mathcal{C}$ satisfies Condition \emph{(P)} if the reflector $\F$ is protoadditive (in our sense).
	\end{defi}
	
	Unlike the previous one, the condition we are about to introduce is not a generalization of a well-known condition for torsion theories in pointed categories. Instead, it introduces a stability condition for torsion objects in our non-pointed context, which is trivially satisfied in the pointed case but necessary to guarantee the desired results. Before proceeding further, after presenting the following definition, we will take a closer look at how this condition relates to the results established so far.
	
	\begin{defi}
		Let $\mathcal{C}$ be a category with a fixed class $\zeros$ of zero objects, and $\torsione$ a torsion theory in $\mathcal{C}$. Consider the following pullback in $\mathcal{C}$
		\[\begin{tikzcd}
			{T'} & {Z'} \\
			T & Z,
			\arrow["\chi", from=1-1, to=1-2]
			\arrow[from=1-1, to=2-1]
			\arrow["\lrcorner"{anchor=center, pos=0}, draw=none, from=1-1, to=2-2]
			\arrow[from=1-2, to=2-2]
			\arrow["{\eta_T}"', from=2-1, to=2-2]
		\end{tikzcd}\]
		where $T \in \tort$, $Z,Z' \in \zeros$, and $\eta_T$ is the $T$-component of the unit of the reflection induced by $\torsione$. We say that $\torsione$ satisfies \emph{Condition (S)} if, for every pullback diagram of the type above, one has $T' \in \tort$ and $\chi=\eta_{T'}$.
	\end{defi}
	
	Let $\mathcal{C}$ be a regular protomodular category with a fixed class $\zeros$ of zero objects. If we consider a stable (with respect to the class of all arrows) factorization system $(\facte, \factm)$ in $\mathcal{C}$ such that every arrow of $\facte$ is a $\zeros$-cokernel, then the induced torsion theory $\torsione$ (described in the previous section) satisfies Condition (S). In fact, $\eta_T \in \facte$ (recall that $T \in \tort$ if and only if its maximum quotient in $\zeros$, i.e.\ $\eta_T$, exists and belongs to $\facte$). So, since the factorization system is stable, we get $\chi \in \facte$ and this implies $T' \in \tort$ and $\chi=\Zcoker(id_{T'})=\eta_{T'}$. Conversely, if $\torsione$ is a torsion theory in $\mathcal{C}$ that satisfies Conditions (M), (N), and (S), then the induced factorization system $(\facte, \factm)$ (described in the previous section) is stable under pullbacks with respect to the class of all arrows. To see this, consider an arrow $e \in \facte$ (i.e.\ $e \colon A \rightarrow Q$ is a $\zeros$-cokernel such that $T \coloneqq K[e] \in \tort$). Given an arrow $g$ of $\mathcal{C}$, we compute $e'$ as the pullback of $e$ along $g$. Hence we construct the commutative cube
	\[\begin{tikzcd}
		& {K[e']} && {\Z(C)} \\
		P && C \\
		& T && {\Z(Q)} \\
		A && Q,
		\arrow["\chi", from=1-2, to=1-4]
		\arrow["{k'}"', from=1-2, to=2-1]
		\arrow["\theta"'{pos=0.3}, from=1-2, to=3-2]
		\arrow[from=1-4, to=2-3]
		\arrow["{\Z(g)}", from=1-4, to=3-4]
		\arrow["{e'}"{pos=0.7}, from=2-1, to=2-3]
		\arrow["{g'}"', from=2-1, to=4-1]
		\arrow["g"{pos=0.3}, from=2-3, to=4-3]
		\arrow["{\eta_T}"{pos=0.3}, from=3-2, to=3-4]
		\arrow["k"', from=3-2, to=4-1]
		\arrow[from=3-4, to=4-3]
		\arrow["e"', from=4-1, to=4-3]
	\end{tikzcd}\]
	where the front face is a pullback, the top and the bottom faces are obtained, respectively, as the pullback diagrams given by the $\zeros$-kernels of $e'$ and of $e$, and $\theta$ is induced by the universal property of the bottom pullback. Moreover, thanks to Condition (M) we know that the arrow from $T$ to $\Z(Q)$ is a maximum quotient of $T$ in $\zeros$. It is easy to see that the square
	\[\begin{tikzcd}
		{K[e']} & {\Z(C)} \\
		T & {\Z(Q)}
		\arrow["\chi", from=1-1, to=1-2]
		\arrow["\theta"', from=1-1, to=2-1]
		\arrow["{\Z(g)}", from=1-2, to=2-2]
		\arrow["{\eta_T}"', from=2-1, to=2-2]
	\end{tikzcd}\]
	is a pullback. Therefore, thanks to Condition (S), we conclude that $K[e'] \in \tort$ and $\chi=\Zcoker(id_{T'})=\eta_{T'}$. As a consequence we have $\chi \in \facte$. Applying \cite[Proposition 14]{Bourn protomod}, we can state that the pullback diagram
	\[\begin{tikzcd}
		{K[e']} & {\Z(C)} \\
		P & C
		\arrow["\lrcorner"{anchor=center, pos=0}, draw=none, from=1-1, to=2-2]
		\arrow[two heads, "\chi", from=1-1, to=1-2]
		\arrow["{k'}"', from=1-1, to=2-1]
		\arrow[from=1-2, to=2-2]
		\arrow[two heads, "{e'}"', from=2-1, to=2-2]
	\end{tikzcd}\]
	is also a pushout. Since the class $\facte$ is pushout stable, we conclude that $e' \in \facte$. To sum up, we have a one-to-one correspondence between stable (with respect to the class of all arrows) factorization systems $(\facte, \factm)$, where every arrow in $\facte$ is a $\zeros$-cokernel, and torsion theories satisfying Conditions (M), (N), and (S).
	
	In Propositions \ref{prop 1 torsione riflessione} and \ref{prop 2 torsione e riflessione}, we examined the relationship between torsion theories and reflective subcategories in a protomodular category with a fixed class $\zeros$ of zero objects. How can these results be refined by introducing Condition (S)? Consider a full, replete, reflective subcategory $\torf$ of $\mathcal{C}$, with reflector $\F \colon \mathcal{C} \rightarrow \torf$, such that every component $\eta_A$ of the unit of the reflection is a $\zeros$-cokernel, $\F(Z)=Z$ for every $Z \in \zeros$, and the reflector is semi-left exact. As proved in Proposition \ref{prop 1 torsione riflessione}, we know that such a reflection defines a torsion theory $\torsione$. Fix a torsion object $T \in \tort$ such that its maximum quotient in $\zeros$ is $\eta_T \colon T \rightarrow Z$, and an arrow $z \colon Z' \rightarrow Z$. Given the pullback
	\[\begin{tikzcd}
		{T'} & {Z'} \\
		T & Z,
		\arrow["{z^*(\eta_T)}", from=1-1, to=1-2]
		\arrow[from=1-1, to=2-1]
		\arrow["\lrcorner"{anchor=center, pos=0}, draw=none, from=1-1, to=2-2]
		\arrow["z", from=1-2, to=2-2]
		\arrow["{\eta_T}"', from=2-1, to=2-2]
	\end{tikzcd}\]
	since the reflection is semi-left exact we can conclude that $\F(z^*(\eta_T))$ is an isomorphism. Hence, $\F(T')=Z'$ i.e.\ $T' \in \tort$ and $\eta_{T'}=z^*(\eta_T)$; this means that $\torsione$ satisfies (S). Conversely, thanks to Proposition \ref{prop 2 torsione e riflessione}, we know that every torsion theory $\torsione$ satisfying Condition (M') defines a reflection whose reflector is $\F \colon \mathcal{C} \rightarrow \torf$. We will prove that if $\torsione$ satisfies (S), then $\F$ is semi-left exact. To do this, let $B$ be an object of $\mathcal{C}$ and $f \colon A \rightarrow \F(B)$ a morphism in $\torf$. Using the $\zeros$-kernel of $\eta_B$ and the $\zeros$-kernel of $f'' \coloneqq f^*(\eta_B)$, we build the following commutative cube:
	\[\begin{tikzcd}
		& {K[f'']} && {\Z(A)} \\
		{A'} && A \\
		& {\T(B)} && {\Z(\F(B))} \\
		B && {\F(B).}
		\arrow[from=1-2, to=1-4]
		\arrow["k"', from=1-2, to=2-1]
		\arrow[from=1-2, to=3-2]
		\arrow["{\varepsilon_A}"', from=1-4, to=2-3]
		\arrow[from=1-4, to=3-4]
		\arrow["{f''}"{pos=0.7}, from=2-1, to=2-3]
		\arrow["{f'}"', from=2-1, to=4-1]
		\arrow["f"{pos=0.2}, from=2-3, to=4-3]
		\arrow[from=3-2, to=3-4]
		\arrow["{t_B}", from=3-2, to=4-1]
		\arrow[from=3-4, to=4-3]
		\arrow["{\eta_B}"', from=4-1, to=4-3]
	\end{tikzcd}\]
	Since the top, the bottom, and the front faces are pullbacks, we deduce that also the back face is a pullback. Hence, thanks to Condition (S), we can state that $K[f''] \in \tort$ and $K[f''] \rightarrow \Z(A)$ is the $K[f'']$-component of the unit of the reflection. Hence, $K[f''] \rightarrow \Z(A)$ is a maximum quotient of $K[f'']$ in $\zeros$. Thanks to \cite[Proposition 14]{Bourn protomod} the top face of the cube is a pushout and, applying Proposition \ref{coker e max quotient}, we obtain that $f''=\Zcoker(k)$. Therefore, the sequence $K[f''] \xrightarrow{k} A' \xrightarrow{f''} A$ is exact and such that $K[f''] \in \tort$, $A \in \torf$; hence, it is isomorphic to the sequence $\T(A') \rightarrow A' \rightarrow \F(A')$. This suffices to conclude that $\F(f'')$ is an isomorphism (since $\F(\eta_{A'})$ is). Recalling that $f''$ is the pullback of $\eta_B$ along $f$, we conclude that the reflection is semi-left exact.
	To sum up, we get the following:
	
	\begin{prop}
		There is a one-to-one correspondence between torsion theories satisfying Conditions (M') and (S), and full, replete, reflective subcategories $\torf \subseteq \mathcal{C}$ with reflector $\F \colon \mathcal{C} \rightarrow \torf$, such that every component $\eta_A$ of the unit of the reflection is a $\zeros$-cokernel, $\F(Z) = Z$ for every $Z \in \zeros$, and the reflector is semi-left exact.
	\end{prop}
	
	Using the well-known fact that a reflection is semi-left exact if and only if it gives an admissible Galois structure w.r.t. the class of all arrows, we get the following result, which is fundamental to apply the methods of categorical Galois theory to our context:
	
	\begin{cor}
		Let $\mathcal{C}$ be a regular protomodular category with a fixed class $\zeros$ of zero objects. Consider a torsion theory $\torsione$ in $\mathcal{C}$ satisfying Conditions (M') and (S). Then, the data $\Gamma \coloneqq (\F, \i, \Arr(\mathcal{C}), \Arr(\torf))$ determine an admissible Galois structure.
	\end{cor}
	
		
		Adding the protoadditivity of the reflector, we are able to characterize in a simple way the effective descent morphisms that are central and normal extensions of this Galois structure:
		
		\begin{teo}\label{centrali e torsione}
			Let $\mathcal{C}$ be a Barr-exact protomodular category with a fixed class $\zeros$ of zero objects. Consider a torsion theory $\torsione$ in $\mathcal{C}$ satisfying Conditions (S), (M'), and (P). Let $f$ be an effective descent morphism in $\mathcal{C}$ (i.e.\ a regular epimorphism, since $\mathcal{C}$ is Barr-exact) and let $\Gamma$ be the Galois structure associated with the reflector $\F$. Then, the following conditions are equivalent:
			\begin{itemize}
				\item[(a)] $f$ is a normal extension for $\Gamma$;
				\item[(b)] $f$ is a central extension for $\Gamma$;
				\item[(c)] $K[f] \in \torf$.
			\end{itemize}
		\end{teo}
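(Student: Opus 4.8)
The plan is to establish the cycle $(a) \Rightarrow (b) \Rightarrow (c) \Rightarrow (a)$, the only substantial implication being $(c) \Rightarrow (a)$. The implication $(a) \Rightarrow (b)$ is immediate: if $f$ is a normal extension then, taking $f$ itself as the effective descent morphism required in the definition of central extension, the pullback of $f$ along $f$ is one of the kernel pair projections of $f$, which is a trivial extension because $f$ is normal; hence $f$ is a central extension.

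For $(b) \Rightarrow (c)$, I would first note that every trivial extension $p \colon P \to Q$ satisfies $K[p] \in \torf$: since $p$ is trivial, the naturality square of $\eta$ at $p$ is a pullback, so pasting it with the pullback defining $\Zker(p)$ along $\varepsilon_Q \colon \Z(Q) \rightarrowtail Q$ gives $K[p] \cong \Z(Q) \times_{\F(Q)} \F(P)$, a pullback of objects of $\torf$ (recall $\Z(Q) \in \zeros \subseteq \torf$); as $\torf$ is reflective, hence closed under limits, $K[p] \in \torf$. Now let $f$ be a central extension witnessed by an effective descent morphism $g \colon E \to B$ with $g^*(f)$ trivial. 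Then $K[g^*(f)] \in \torf$, and a routine computation of $\zeros$-kernels along pullbacks (using that $\Z(E) \to B$ factors through $\Z(B)$) identifies $K[g^*(f)]$ with $K[f] \times_{\Z(B)} \Z(E)$. Since $\torsione$ satisfies Conditions (M') and (S), the reflector $\F$ has stable units, as established in the discussion preceding the theorem; Lemma~\ref{F lemma 2} then applies to this pullback and yields $K[f] \in \torf$.

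For $(c) \Rightarrow (a)$: since $\mathcal{C}$ is Barr-exact, $f$ is an effective descent morphism, so it remains to prove that the kernel pair projections $f_1, f_2 \colon R[f] \rightrightarrows A$ of $f$ are trivial extensions; by the symmetry of the kernel pair it is enough to treat $f_1$, a split epimorphism with section the diagonal $\Delta_A$. Computing $\zeros$-kernels along pullbacks gives $K[f_1] \cong K[f] \times_{\Z(B)} \Z(A)$, so $K[f_1] \in \torf$ because $K[f] \in \torf$ by hypothesis and $\torf$ is closed under limits. Now $\varepsilon_A \colon \Z(A) \rightarrowtail A$ is inverted by $\Z$ and $f_1$ is a split epimorphism, so Condition (P) guarantees that $\F$ preserves the pullback square computing the $\zeros$-kernel of $f_1$; using $\F(\Z(A)) = \Z(A)$ and $\F(K[f_1]) = K[f_1]$, this identifies $K[f_1]$ with $\Z(A) \times_{\F(A)} \F(R[f])$, which is exactly the $\zeros$-kernel of the first projection $q \colon A \times_{\F(A)} \F(R[f]) \to A$. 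Consequently the comparison morphism $\langle f_1, \eta_{R[f]} \rangle \colon R[f] \to A \times_{\F(A)} \F(R[f])$ is a morphism over $A$ between the split epimorphisms $f_1$ and $q$ which restricts to an isomorphism on $\zeros$-kernels, and hence is an isomorphism by the (non-pointed) split short five lemma, valid in the regular protomodular category $\mathcal{C}$. Thus $f_1$ is a trivial extension, and therefore $f$ is a normal extension.

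The conceptual core is the use of Condition (P) in $(c) \Rightarrow (a)$: protoadditivity of $\F$ is precisely what allows the $\zeros$-kernel of $q \colon A \times_{\F(A)} \F(R[f]) \to A$ to be computed as $\F$ applied to the $\zeros$-kernel of the split projection $f_1$. The step I expect to demand the most care is the closing comparison there: one must run the split short five lemma in the non-pointed framework, where the $\zeros$-kernel plays the role of the kernel but is a fibre over $\Z(A)$, which need not be a zero object of $\mathcal{C}$; the two explicit identifications of $\zeros$-kernels as pullbacks (for $K[g^*(f)]$ and for the $\zeros$-kernel of $q$) likewise rest on careful pullback pasting together with the fact that $\F$ fixes the objects of $\zeros$.
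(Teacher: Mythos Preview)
Your proof is correct and follows essentially the same strategy as the paper's: the same cycle of implications, the same cube/pullback-pasting identifications of $\zeros$-kernels in both $(b)\Rightarrow(c)$ and $(c)\Rightarrow(a)$, and the same use of protoadditivity on the $\zeros$-kernel square of the split projection $f_1$. The only cosmetic difference is that where you invoke a ``non-pointed split short five lemma'' for the comparison $R[f]\to A\times_{\F(A)}\F(R[f])$, the paper instead assembles the relevant data into a cube (left, right, and back faces pullbacks) and cites the protomodular cube lemma \cite[Proposition~3.1.24]{Bbbook} to conclude that the front (naturality) face is a pullback---which is exactly the statement that your comparison map is an isomorphism, so your worry about justifying the non-pointed five lemma is resolved by that reference.
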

		
		\begin{proof}
			(a)$\implies$(b) Always true.\\
			(b)$\implies$(c) Let $p \colon E \rightarrow B$ be an effective descent morphism such that $p^*(f)$ is a trivial extension. Consider the commutative diagram
			\[\begin{tikzcd}
				& {K[p^*(f)]} && {K[f]} \\
				P && A \\
				& {\Z(E)} && {\Z(B)} \\
				E && B,
				\arrow["\theta", from=1-2, to=1-4]
				\arrow[from=1-2, to=2-1]
				\arrow[from=1-2, to=3-2]
				\arrow[from=1-4, to=2-3]
				\arrow[from=1-4, to=3-4]
				\arrow[from=2-1, to=2-3]
				\arrow["{p^*(f)}"', from=2-1, to=4-1]
				\arrow["f"{pos=0.3}, from=2-3, to=4-3]
				\arrow[from=3-2, to=3-4]
				\arrow[from=3-2, to=4-1]
				\arrow[from=3-4, to=4-3]
				\arrow["p"', from=4-1, to=4-3]
			\end{tikzcd}\]
			where the front face, the left face, and the right face are pullbacks. Hence, the back face is a pullback, too. Moreover, we know that the naturality square
			\[\begin{tikzcd}
				P & {\F(P)} \\
				E & {\F(E)}
				\arrow["{\eta_P}", from=1-1, to=1-2]
				\arrow["{p^*(f)}"', from=1-1, to=2-1]
				\arrow["\lrcorner"{anchor=center, pos=0}, draw=none, from=1-1, to=2-2]
				\arrow["{\F(p^*(f))}", from=1-2, to=2-2]
				\arrow["{\eta_E}"', from=2-1, to=2-2]
			\end{tikzcd}\]
			is a pullback. Therefore, we can construct the commutative cube
			\[\begin{tikzcd}
				& {K[p^*(f)]} && {K[\F(p^*(f))]} \\
				P && {\F(P)} \\
				& {\Z(E)} && {\Z(\F(E))} \\
				E && {\F(E)}
				\arrow["\chi", from=1-2, to=1-4]
				\arrow[from=1-2, to=2-1]
				\arrow[from=1-2, to=3-2]
				\arrow[from=1-4, to=2-3]
				\arrow[from=1-4, to=3-4]
				\arrow["{\eta_P}"{pos=0.7}, from=2-1, to=2-3]
				\arrow["{p^*(f)}"', from=2-1, to=4-1]
				\arrow["{\F(p^*(f))}"{pos=0.3}, from=2-3, to=4-3]
				\arrow[from=3-2, to=3-4]
				\arrow[from=3-2, to=4-1]
				\arrow[from=3-4, to=4-3]
				\arrow["{\eta_E}"', from=4-1, to=4-3]
			\end{tikzcd}\]
			where the front face, the left face, and the right face are pullbacks. Hence, the back face is a pullback, too. Since $K[\F(p^*(f))] \in \torf$ (we can apply Lemma \ref{F lemma 1} because it is a subobject of $\F(P) \in \torf$), we get that $K[p^*(f)] \in \torf$ ($\torf$ is closed under limits). To conclude, we recall that
			\[\begin{tikzcd}
				{K[p^*(f)]} & {K[f]} \\
				{\Z(E)} & {\Z(B)}
				\arrow["\theta", from=1-1, to=1-2]
				\arrow[from=1-1, to=2-1]
				\arrow["\lrcorner"{anchor=center, pos=0}, draw=none, from=1-1, to=2-2]
				\arrow[from=1-2, to=2-2]
				\arrow[from=2-1, to=2-2]
			\end{tikzcd}\]
			is a pullback; and so, applying Lemma \ref{F lemma 2} (the reflection is semi-left exact thanks to Condition (S)), we get $K[f] \in \torf$.\\
			(c)$\implies$(a) Let $(\Eq(f), \pi_1, \pi_2)$ be the kernel pair of $f$, and $\Delta= \langle 1_A, 1_A \rangle$ the canonical morphism. Consider the commutative cube
			\[\begin{tikzcd}
				& {K[\pi_1]} && {K[f]} \\
				{\Eq(f)} && A \\
				& {\Z(A)} && {\Z(B)} \\
				A && B,
				\arrow[from=1-2, to=1-4]
				\arrow[from=1-2, to=2-1]
				\arrow[shift right, from=1-2, to=3-2]
				\arrow[from=1-4, to=2-3]
				\arrow[from=1-4, to=3-4]
				\arrow["{\pi_2}"{pos=0.7}, from=2-1, to=2-3]
				\arrow["{\pi_1}"', shift right, from=2-1, to=4-1]
				\arrow["f"{pos=0.3}, from=2-3, to=4-3]
				\arrow[shift right, from=3-2, to=1-2]
				\arrow[from=3-2, to=3-4]
				\arrow[from=3-2, to=4-1]
				\arrow[from=3-4, to=4-3]
				\arrow["\Delta"', shift right, from=4-1, to=2-1]
				\arrow["f"', from=4-1, to=4-3]
			\end{tikzcd}\]
			where the front face, the left face, and the right face are pullbacks. Hence, the back face is a pullback and then $K[\pi_1] \in \torf$ (by assumption $K[f] \in \torf$ and $\torf$ is closed under limits). We apply the functor $\F$ to the left face of the diagram above, and since $\F$ is $\mathcal{Z}$-protoadditive we obtain the pullback (in $\mathcal{C}$)
			\[\begin{tikzcd}
				{\F(K[\pi_1])=K[\pi_1]} & {\F(\Eq(f))} \\
				{\F(\Z(A))=\Z(A)} & {\F(A).}
				\arrow[from=1-1, to=1-2]
				\arrow[from=1-1, to=2-1]
				\arrow["\lrcorner"{anchor=center, pos=0}, draw=none, from=1-1, to=2-2]
				\arrow[from=1-2, to=2-2]
				\arrow[from=2-1, to=2-2]
			\end{tikzcd}\]
			We construct the diagram
			\[\begin{tikzcd}
				& {K[\pi_1]} && {K[\pi_1]} \\
				{\Eq(f)} && {\F(\Eq(f))} \\
				& {\Z(A)} && {\Z(A)} \\
				A && {\F(A),}
				\arrow[equal, from=1-2, to=1-4]
				\arrow[equal, from=3-2, to=3-4]
				\arrow[from=1-2, to=2-1]
				\arrow[from=1-2, to=3-2]
				\arrow[from=1-4, to=2-3]
				\arrow[from=1-4, to=3-4]
				\arrow["{\eta_{\Eq(f)}}"{pos=0.7}, from=2-1, to=2-3]
				\arrow["{\pi_1}"', shift right, from=2-1, to=4-1]
				\arrow["{\F(\pi_1)}"{pos=0.3}, from=2-3, to=4-3]
				\arrow[from=3-2, to=4-1]
				\arrow[from=3-4, to=4-3]
				\arrow["\Delta"', shift right, from=4-1, to=2-1]
				\arrow["{\eta_A}"', from=4-1, to=4-3]
			\end{tikzcd}\]
			where the left face, the right face, and the back face are pullbacks. By protomodularity, applying \cite[Proposition 3.1.24]{Bbbook}, we obtain that the front face is a pullback, and so $f$ is a normal extension.
		\end{proof}
		
		\section{Examples} \label{examples}
		
		In this section we provide several examples of torsion theories in regular categories with a fixed class $\zeros$ of zero objects.
		
		In the examples we are about to consider, we will often work with categories where the only quotient of the initial object is the terminal object, and hence, in these cases, $\zeros = \graffe{\iniziale, \terminal}$. Another important property that holds in many of the examples we will present is the following: the terminal object is strict (i.e., every arrow $f \colon \terminal \rightarrow A$ is an isomorphism).
		Let us first observe that under these assumptions, for any object $A \neq \terminal$, the morphism $\iniziale \rightarrow A$ is a monomorphism. To see this, consider the (regular epi, mono)-factorization of $\iniziale \rightarrow A$
		\[\begin{tikzcd}
			\iniziale && A \\
			& I.
			\arrow[from=1-1, to=1-3]
			\arrow[two heads, from=1-1, to=2-2]
			\arrow[tail, from=2-2, to=1-3]
		\end{tikzcd}\]
		Since $A$ is not the terminal object, we deduce that $I$ cannot be terminal either. Therefore, since $I$ is a quotient of $\iniziale$, the only possibility is that $I$ is $\iniziale$ itself. Finally, note that any torsion theory in a category satisfying these assumptions also satisfies Condition (M). Indeed, consider an exact sequence $T \xrightarrow{k} A \xrightarrow{q} Q$, where $T \in \tort$, and observe that $\Z(Q) = \terminal$ if and only if $Q = \terminal$ (recall that $\terminal$ is strict). Moreover, since $q = \Zcoker(k)$ and $k = \Zker(q)$, we have that $Q = \terminal$ if and only if the maximum quotient of $T$ in $\zeros$ is $\terminal$. This completes the proof.
		
		\subsection{Semisimple and perfect MV-algebras}
		We briefly recall some notions concerning MV-algebras; more information can be found in \cite{cignoli2013algebraic}. An \emph{MV-algebra} is an algebra $(A, \oplus, \lnot, 0)$ with a binary operation $\oplus$, a unary operation $\lnot$, and a constant $0$ such that $(A, \oplus, 0)$ is a commutative monoid, $\lnot$ is an involution, $\lnot 0$ is the absorbing element of $\oplus$, and for every $x,y \in A$
		\[ \lnot (\lnot x \oplus y) \oplus y=\lnot (\lnot y \oplus x) \oplus x. \]
		The category $\mv$ is the variety of universal algebras determined by this equational theory. Given an MV-algebra $A$, we define the constant $1$ and the binary operations $\odot$, $\ominus$, $\rightarrow$ and $d$ as follows:
		\[ 1 \coloneqq \lnot 0; \]
		\[ x \odot y \coloneqq \lnot (\lnot x \oplus \lnot y); \]
		\[ x \rightarrow y \coloneqq \lnot x \oplus y; \]
		\[ x \ominus y \coloneqq \lnot (\lnot x \oplus y)=\lnot (x \rightarrow y)=x \odot \lnot y; \]
		\[ d(x,y) \coloneqq (x \ominus y) \oplus (y \ominus x) \]
		(this last operation is called \emph{distance} between $x$ and $y$). In every MV-algebra $A$, it is possible to introduce a partial order: we say that $x \leq y$ if and only if $\lnot x \oplus y = 1$. With respect to this order, the MV-algebra $A$ can be shown to be a distributive lattice. In this structure, the least element is $0$ and the greatest element is $1$. In an MV-algebra $A$, an ideal $I$ is a non-empty subset of $A$ such that:
		\begin{itemize}
			\item[-] if $x \in I$ and $y \leq x$, then $y \in I$ (downward closure);
			\item[-] for any $x, y \in I$, the element $x \oplus y \in I$ (closure under addition).
		\end{itemize}
		
		Given an ideal $I$ of $A$, it is possible to introduce a congruence $\theta$ on $A$ induced by $I$: we say that $(x, y) \in \theta$ if and only if $d(x, y) \in I$. Moreover, we denote by $A/I$ the algebra obtained as the quotient of $A$ w.r.t.\ the congruence determined by $I$.
		We define the \emph{radical} of an MV-algebra $A$ as the intersection of its maximal ideals, and we denote it with $\Rad(A)$. An MV-algebra is said to be \emph{semisimple} if its radical is trivial. We will also consider perfect MV-algebras, which are defined as follows: an MV-algebra $A$ is said to be \emph{perfect} if $A = \Rad(A) \cup \lnot \Rad(A)$ (where, given $S \subseteq A$, $\lnot S \coloneqq \graffe{ a \in A \,|\, \lnot a \in S}$). In $\mv$ the initial object is $\initial \coloneqq \graffe{0,1}$ and the terminal one is $\terminal \coloneqq \graffe{0=1}$; one can easily see that the terminal object is strict. So, the class of quotients of the initial object is $\graffe{\terminal, \initial}$; this class will be our class $\zeros$ of zero objects. We will show that, with respect to $\zeros$, the full subcategory of semisimple MV-algebras and the full subcategory of perfect MV-algebras constitute the torsion-free and torsion parts, respectively, of a torsion theory on $\mv$.\\
		
		Let $A$ be an MV-algebra. An element $a$ in $A$ is said to be \emph{infinitely small} or \emph{infinitesimal} if $a \neq 0$ and $na \leq \lnot a$ for every integer $n \geq 0$. The set of all infinitesimals in $A$ will be denoted by $\Inf(A)$.

		\begin{prop}[\cite{cignoli2013algebraic}, Lemma 7.3.3]
			Let $A$ be an MV-algebra. One has
			\[ \Rad(A)=\Inf(A) \cup \{ 0 \}. \]
		\end{prop}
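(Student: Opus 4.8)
The plan is to rephrase the identity as $\Rad(A) = \{a \in A : na \le \lnot a \text{ for all } n \ge 0\}$ and prove the two inclusions separately. The rephrasing is immediate: the element $0$ satisfies $n0 = 0 \le 1 = \lnot 0$, whereas by definition a \emph{nonzero} $a$ satisfies $na \le \lnot a$ for all $n$ precisely when $a \in \Inf(A)$; hence $\{a : na \le \lnot a \ \forall n\} = \Inf(A) \cup \{0\}$, and it remains to identify this set with $\Rad(A)$. For the inclusion $\Inf(A) \cup \{0\} \subseteq \Rad(A)$ I would argue directly with ideals. Since $0$ lies in every ideal, only $a \in \Inf(A)$ needs attention. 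If $a \notin \Rad(A)$, there is a maximal ideal $M$ with $a \notin M$, so by maximality the ideal generated by $M \cup \{a\}$ is all of $A$; using the standard description $\{x \in A : x \le m \oplus na \text{ for some } m \in M,\ n \ge 0\}$ of this ideal, membership of $1$ produces $m \in M$ and $n \ge 0$ with $m \oplus na = 1$, equivalently $\lnot m \le na$. Combined with $na \le \lnot a$ (infinitesimality of $a$ for this very $n$) this gives $\lnot m \le \lnot a$, i.e. $a \le m$, whence $a \in M$ by downward closure — a contradiction.

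The reverse inclusion $\Rad(A) \subseteq \Inf(A) \cup \{0\}$ is the delicate one, and is exactly where the structure theory of the radical developed earlier in \cite{cignoli2013algebraic} is needed; I would either invoke that characterization directly, or run the following descent. Let $a \in \Rad(A)$ and suppose $na \not\le \lnot a$ for some $n$, i.e. $c := (na) \odot a \neq 0$. Since every MV-algebra is a subdirect product of MV-chains (equivalently the intersection of its prime ideals is $\{0\}$), choose a prime ideal $P$ with $c \notin P$; then $B := A/P$ is an MV-chain in which the image $\bar a$ still satisfies $n\bar a \odot \bar a = \bar c \neq 0$, so $\bar a \neq 0$. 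Every MV-chain is a local MV-algebra whose unique maximal ideal is $\Rad(B) = \{x : kx \neq 1 \text{ for all } k\}$; and since the maximal ideals of $B$ are exactly the images of the maximal ideals of $A$ containing $P$, the element $\bar a$ — being the image of $a \in \Rad(A)$ — lies in $\Rad(B)$, so $k\bar a \neq 1$ for every $k$. Passing through Mundici's equivalence $B \cong \Gamma(G,u)$ (or computing directly in the linearly ordered $B$), this ``infinite order'' condition reads $(n+1)\bar a < 1$, and subtracting $\bar a$ in the $\ell$-group $G$ yields $n\bar a \le \lnot \bar a$ — contradicting $n\bar a \odot \bar a \neq 0$. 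Hence no such $n$ exists, so $a \in \Inf(A) \cup \{0\}$.

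The main obstacle is precisely this second inclusion. The first inclusion is settled by a bare manipulation of the generating set of the ideal $\langle M \cup \{a\}\rangle$, but the same trick is unavailable in the other direction: a nonzero radical element, sitting by definition inside every maximal ideal, cannot be ``separated'' by a maximal ideal, so one genuinely has to leave $A$ and pass to the quotient chains $A/P$ (or else appeal to the pre-established description of $\Rad$), where local-ness collapses the problem to the one-line $\ell$-group computation above.
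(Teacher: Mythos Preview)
The paper does not prove this proposition at all; it is quoted from \cite{cignoli2013algebraic} as a black box, so there is nothing in-paper to compare your argument against. Your proof is correct and follows the standard line one finds in that reference: the inclusion $\Inf(A)\cup\{0\}\subseteq\Rad(A)$ via the explicit description of $\langle M\cup\{a\}\rangle$, and the reverse inclusion by passing to an MV-chain $A/P$ through Chang's subdirect representation and using that chains are local, so that $\bar a\in\Rad(B)$ forces $k\bar a\neq 1$ for all $k$.

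One simplification worth noting: the detour through Mundici's $\Gamma$-equivalence is unnecessary. In any MV-algebra $x\oplus y=1$ iff $\lnot x\le y$; hence in the chain $B$, if $n\bar a\not\le\lnot\bar a$ then by linearity $\lnot\bar a<n\bar a$, so $\lnot(n\bar a)<\bar a$, so $(n+1)\bar a=n\bar a\oplus\bar a=1$ --- already the desired contradiction with $\bar a\in\Rad(B)$. This replaces the $\ell$-group manipulation (and the side remark about truncation, which you would otherwise need to justify) by a two-line MV identity.
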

		
		Given an MV-algebra $A$, we define $\S(A) \coloneqq A/\Rad(A)$ and $\P(A) \coloneqq \Rad(A) \cup \lnot \Rad(A)$. Thanks to \cite[Lemma 3.6.6]{cignoli2013algebraic} we obtain that $\S(A)$ is semisimple. Moreover, it is possible to see that $\P(A)$ is a perfect subalgebra of $A$.
		If we denote by $s\mv$ the full subcategory of $\mv$ whose objects are the semisimple MV-algebras, we get a functor $\S$ described by the following assignment:
		\[\begin{tikzcd}[row sep=5pt]
			\mv & s\mv \\
			A & {\S(A)=A/\Rad(A)} \\
			\\
			\\
			B & {\S(B)=B/\Rad(B),}
			\arrow["\S",from=1-1, to=1-2]
			\arrow["f"', from=2-1, to=5-1]
			\arrow[maps to, from=2-1, to=2-2]
			\arrow[maps to, from=5-1, to=5-2]
			\arrow["{\S(f)=\overline{f}}", from=2-2, to=5-2]
			\arrow[from=1-1, to=1-2]
		\end{tikzcd}\]
		where $\overline{f}([a]) \coloneqq [f(a)]$, for every $[a] \in \S(A)$. We have to show that $\overline{f}$ is well defined: $a \in \Rad(A)$ if and only if $na \leq \lnot a$ for every $n \in \mathbb{N}$, thus $nf(a)=f(na) \leq f(\lnot a)=\lnot f(a)$ for every $n \in \mathbb{N}$, and so $f(a) \in \Rad(B)$.\\
		Similarly, if we denote by $p\mv$ the full subcategory of $\mv$ whose objects are the perfect MV-algebras, we get a functor $\P$ described by the following assignment:
		\[\begin{tikzcd}[row sep=5pt]
			\mv & p\mv \\
			A & {\P(A)=\Rad(A) \cup \lnot \Rad(A)} \\
			\\
			\\
			B & {\P(B)=\Rad(B) \cup \lnot \Rad(B),}
			\arrow["f"', from=2-1, to=5-1]
			\arrow[maps to, from=2-1, to=2-2]
			\arrow[maps to, from=5-1, to=5-2]
			\arrow["{\P(f)}", from=2-2, to=5-2]
			\arrow["\P", from=1-1, to=1-2]
		\end{tikzcd}\]
		where $\P(f)(a)=f(a)$ for every $a \in \P(A).$ We have to show that $\P(f)$ is well defined: as we saw before, if $a \in \Rad(A)$ then $f(a) \in \Rad(B)$, and so $f \colon A \rightarrow B$ restricts to $\P(f) \colon \P(A) \rightarrow \P(B)$.
		
		\begin{oss}
			Given an MV-algebra $A$ and an ideal $I$, the set $I \cup \lnot I$ is always a subalgebra of $A$. Now, consider a morphism $f \colon A \to B$ in $\mv$. We know that the $\zeros$-kernel of $f$ is given by the pullback
			\[\begin{tikzcd}
				{K[f]} & Z \\
				A & B,
				\arrow[from=1-1, to=1-2]
				\arrow["k"', from=1-1, to=2-1]
				\arrow["\lrcorner"{anchor=center, pos=0}, draw=none, from=1-1, to=2-2]
				\arrow[from=1-2, to=2-2]
				\arrow["f"', from=2-1, to=2-2]
			\end{tikzcd}\]
			where $Z=\terminal$ if $B=\terminal$, otherwise $Z=\initial$. It follows that $K[f]=I \cup \lnot I$, where $I=\invf{0}$ (it is a well-known fact that the preimage of $0$ under a morphism of MV-algebras is always an ideal).
		\end{oss}
		
		\begin{prop}
			$(p\mv,s\mv)$ is a torsion theory for $\mv$.
		\end{prop}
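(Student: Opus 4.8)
The plan is to verify the three conditions of Definition~\ref{definizione preesatta} for the pair $(p\mv, s\mv)$ with $\zeros = \graffe{\terminal, \initial}$. Condition (a), that $\zeros = p\mv \cap s\mv$, is immediate: both $\initial = \graffe{0,1}$ and $\terminal$ are semisimple and perfect, so $\zeros \subseteq p\mv \cap s\mv$; conversely, if $A$ is simultaneously perfect and semisimple then $\Rad(A) = \graffe{0}$, whence $A = \Rad(A) \cup \lnot \Rad(A) = \graffe{0,1}$, which is $\initial$ when $0 \neq 1$ and $\terminal$ when $0 = 1$, so $A \in \zeros$.

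For condition (b), let $f \colon T \to F$ with $T$ perfect and $F$ semisimple. The computation already used above to see that $\S$ and $\P$ are well defined shows $f(\Rad(T)) \subseteq \Rad(F) = \graffe{0}$; since $T = \Rad(T) \cup \lnot \Rad(T)$, every $a \in T$ satisfies either $f(a) = 0$ or $f(a) = \lnot f(\lnot a) = 1$, so $f$ takes values in the subalgebra $\graffe{0,1}$ of $F$. This subalgebra is precisely $\Z(F)$ (it is $\initial$ if $0 \neq 1$ in $F$, and $\terminal$ if $F = \terminal$), so $f$ factors through $\varepsilon_F$ and $f \in \zideal$.

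For condition (c), for each $A$ we must show that $\P(A) \xrightarrow{k} A \xrightarrow{q} \S(A)$ is short $\zeros$-exact, where $k$ is the inclusion of $\P(A) = \Rad(A) \cup \lnot\Rad(A)$ and $q \colon A \to A/\Rad(A)$ is the quotient; here $\P(A) \in p\mv$ and $\S(A) \in s\mv$ by the lemmas of \cite{cignoli2013algebraic} recalled above. For $k = \Zker(q)$ I would invoke the preceding remark describing $\zeros$-kernels in $\mv$: since $d(a,0) = a$ identically, $q^{-1}(0) = \Rad(A)$, hence $K[q] = \Rad(A) \cup \lnot\Rad(A) = \P(A)$, with $\zeros$-kernel map $k$ (if $A = \terminal$ the sequence degenerates to $\terminal \xrightarrow{\mathrm{id}} \terminal \xrightarrow{\mathrm{id}} \terminal$). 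For $q = \Zcoker(k)$: that $qk \in \zideal$ is clear, since $q$ sends $\Rad(A)$ to $0$ and $\lnot\Rad(A)$ to $1$, so $qk$ factors through $\Z(\S(A)) = \graffe{0,1}$. Now let $g \colon A \to C$ with $gk \in \zideal$, i.e.\ $g(\P(A)) \subseteq \Z(C) = \graffe{0,1}$. If some infinitesimal $a \in \Rad(A)$ had $g(a) = 1$, then applying the monotone map $g$ to $a \leq \lnot a$ would give $1 = g(a) \leq \lnot g(a) = 0$ in $C$, forcing $C = \terminal$, a case in which $g$ factors through $q$ trivially; so we may assume $g(\Rad(A)) = \graffe{0}$. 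Then for any $(x,y)$ in the congruence induced by $\Rad(A)$ we have $g(d(x,y)) = 0$, hence $d(g(x), g(y)) = 0$ and $g(x) = g(y)$, so $g$ factors through $q$, uniquely since $q$ is surjective. This gives $q = \Zcoker(k)$ and finishes the verification.

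The main obstacle is the universal property of the $\zeros$-cokernel in the last step: one has to rule out a morphism out of $A$ that sends $\P(A)$ into $\graffe{0,1}$ without annihilating $\Rad(A)$, and the decisive ingredient is the infinitesimal inequality $a \leq \lnot a$, whose image under any MV-morphism collapses the codomain to $\terminal$ unless the radical is already killed. The remaining steps — the intersection computation, the triviality of $\Hom$ from a perfect to a semisimple algebra, and the identification of the $\zeros$-kernel — are short and follow directly from the structure theory of MV-algebras and the remark on $\zeros$-kernels in $\mv$ recalled above.
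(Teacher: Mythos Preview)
Your proof is correct and follows essentially the same approach as the paper's. The only cosmetic differences are that you invoke the remark computing $K[q]$ as $q^{-1}(0)\cup\lnot q^{-1}(0)$ to identify the $\zeros$-kernel, whereas the paper verifies the universal property of $t_B=\Zker(\eta_B)$ by hand, and that you phrase the key step for the $\zeros$-cokernel via the infinitesimal inequality $a\leq\lnot a$ (forcing $1\leq 0$ in $C$), while the paper phrases it as $g(a)\in\Rad(C)$ (forcing $1\in\Rad(C)$); these are the same observation.
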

		
		\begin{proof}
			First of all we show that
			\[ p\mv \cap s\mv= \{ \terminal, \initial \}. \]
			Let $X$ be a perfect and semisimple MV-algebra; then for every $x \in X$ one has $x \in \Rad(X)=\graffe{0}$ or $\lnot x \in \Rad(X)=\graffe{1}$. Next, we prove that, given two MV-algebras $A,B$, with $A$ perfect and $B$ semisimple, $\Hom_{\mv}(A,B)\subseteq \zideal$. Let us suppose $B \neq \terminal$ (this implies $A \neq \terminal$); then a morphism $f \colon A \rightarrow B$ factors in the following way:
			\[\begin{tikzcd}
				A && B \\
				& {\initial,}
				\arrow["f", from=1-1, to=1-3]
				\arrow["{\chi_{\lnot \Rad(A)}}"', from=1-1, to=2-2]
				\arrow[from=2-2, to=1-3]
			\end{tikzcd}\]
			where
			\[ \chi_{\lnot \Rad(A)}(a) \coloneqq \begin{cases}
				0 & a \in \Rad(A) \\
				1 & a \in \lnot \Rad(A).
			\end{cases}
			\]
			In fact, if $a \in A$ then $a \in \Rad(A)$ or $a \in \lnot \Rad(A)$, since $A$ is perfect; so $f(a) \in \Rad(B)=\{ 0 \}$ or $f(a) \in \lnot \Rad(A)=\{ 1 \}$, because $B$ is semisimple.
			If $B= \terminal$ we have
			\[\begin{tikzcd}
				A && {\terminal} \\
				& {\terminal.}
				\arrow["f", from=1-1, to=1-3]
				\arrow["f"', from=1-1, to=2-2]
				\arrow[equals, from=2-2, to=1-3]
			\end{tikzcd}\]
			Now, we show that, for every MV-algebra $B$, we have an exact sequence with $B$ in the middle, a torsion object on the left, and a torsion-free object on the right. If $B= \terminal$ the sequence is given by $B=B=B$. Suppose $B \neq \terminal$; we prove that
			\[\begin{tikzcd}
				{\P(B)} & B & {\S(B)}
				\arrow["{t_B}",from=1-1, to=1-2]
				\arrow["{\eta_B}", from=1-2, to=1-3]
			\end{tikzcd}\]
			is an exact sequence. Observe that, since $\eta_B(b)=0$ for every $b \in \Rad(B)$, $\eta_B t_B \in \zideal$. We show that $t_B$ is a $\zeros$-kernel of $\eta_B$. Consider an arrow $\lambda_B \colon C \rightarrow B$ such that there exists an arrow $x \colon C \rightarrow \initial$ making the diagram below commutative
			\[\begin{tikzcd}
				B & {\S(B)} \\
				C & {\initial.}
				\arrow["{\eta_B}", from=1-1, to=1-2]
				\arrow["{\lambda_B}", from=2-1, to=1-1]
				\arrow[from=2-2, to=1-2]
				\arrow["x"', from=2-1, to=2-2]
			\end{tikzcd}\]
			Notice that if $\eta_B \lambda_B$ factored through $\terminal$, then we would have $\S(B)=\terminal$; since $\S(B)=\terminal$ implies $1 \in \Rad(B)$, it would follow $B=\terminal$. Now, for every $c \in C$, if $x(c)=0$ then $\lambda_B(c) \in \Rad(B)$ and, if $x(c)=1$, then $\lambda_B(c) \in \lnot \Rad(B)$; hence, $\lambda_B$ restricts to $\lambda'_B \colon C \rightarrow \P(B)$ and the following diagram commutes:
			\[\begin{tikzcd}[column sep=5pt]
				{\P(B)} && B && {\S(B)} \\
				& C && {\initial.}
				\arrow[from=2-4, to=1-5]
				\arrow["x"', from=2-2, to=2-4]
				\arrow["{\lambda'_B}", from=2-2, to=1-1]
				\arrow["{\lambda_B}", from=2-2, to=1-3]
				\arrow["{\eta_B}", from=1-3, to=1-5]
				\arrow["{t_B}", from=1-1, to=1-3]
			\end{tikzcd}\]
			Therefore, $t_B$ is a $\zeros$-kernel of $\eta_B$.\\
			Next, we focus on $\eta_B$: we want to show that it is a $\zeros$-cokernel of $t_B$. Fix an arrow $\theta_B \colon B \rightarrow C$ such that $\theta_B t_B \in \zideal$. If $\theta_B t_B$ factors through $\terminal$, we can conclude that $C=\terminal$. Hence, the claim becomes trivial. Then, suppose there exists an arrow $y \colon \P(B) \rightarrow \initial$ making the following diagram commute:
			\[\begin{tikzcd}
				{\P(B)} & B \\
				{\initial} & C.
				\arrow["{\theta_B}", from=1-2, to=2-2]
				\arrow["{t_B}", from=1-1, to=1-2]
				\arrow["y"', from=1-1, to=2-1]
				\arrow[from=2-1, to=2-2]
			\end{tikzcd}\]
			If $b \in \Rad(B)$ then $\theta_B(b) \in \initial \subseteq C$; but, if $\theta_B(b)=1$, we get $1 \in \Rad(C)$ and so $C=\terminal$ (we are excluding this case). Hence, $\theta_B$ defines a unique morphism $\theta'_B \colon \S(B) \rightarrow C$ such that the diagram below is commutative
			\[\begin{tikzcd}[column sep=5pt]
				{\P(B)} && B && {\S(B)} \\
				& {\initial} && C.
				\arrow["{t_B}", from=1-1, to=1-3]
				\arrow["{\eta_B}", from=1-3, to=1-5]
				\arrow[from=2-2, to=2-4]
				\arrow["y"', from=1-1, to=2-2]
				\arrow["{\theta_B}", from=1-3, to=2-4]
				\arrow["{\theta'_B}", from=1-5, to=2-4]
			\end{tikzcd}\]
			Therefore, $\eta_B$ is a $\zeros$-cokernel of $t_B$.
		\end{proof}
		
		\begin{prop}\label{ideali in MV}
			The torsion theory $(p\mv,s\mv)$ in $\mv$ satisfies Conditions (M), (S), and (N).
		\end{prop}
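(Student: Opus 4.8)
The plan is to verify the three conditions directly, using the explicit description of $\zeros$-kernels in $\mv$. Recall that for $f\colon A\to B$ with $B\neq\terminal$ one has $K[f]=I\cup\lnot I$ with $I=\invf{0}$ an ideal of $A$, and $\Zker(f)$ is the inclusion $I\cup\lnot I\hookrightarrow A$; conversely, for every ideal $J$ of $A$ the subalgebra $J\cup\lnot J$ is the $\zeros$-kernel of the quotient $A\to A/J$, since the preimages of $0$ and $1$ under that quotient are $J$ and $\lnot J$. I shall also use that, for $B\neq\terminal$, the chosen short exact sequence is $\P(B)\xrightarrow{t_B}B\xrightarrow{\eta_B}\S(B)$, where $t_B$ is the inclusion $\P(B)=\Rad(B)\cup\lnot\Rad(B)\hookrightarrow B$ and $\eta_B$ is the quotient $B\to B/\Rad(B)$. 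Condition (M) then needs no argument: since $\zeros=\graffe{\terminal,\initial}$ and $\terminal$ is strict, the general remark at the beginning of Section \ref{examples} applies verbatim.

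For Condition (S), if $T=\terminal$ then $\eta_T=id_\terminal$, $Z=\terminal$, and the pullback forces $T'\cong Z'\in\zeros\subseteq p\mv$ with $\chi=\eta_{T'}$. If $T\neq\terminal$, then, $T$ being perfect, the quotient $T/\Rad(T)$ has exactly the two classes $\Rad(T)$ and $\lnot\Rad(T)$, so $\S(T)\cong\initial$; thus $Z=\initial$ and $\eta_T\colon T\to\initial$ is the canonical surjection. Since the two-element Boolean algebra $\initial$ admits no MV-homomorphism from $\terminal$ and only the identity endomorphism, any pullback of $\eta_T$ of the form appearing in Condition (S) has $Z'=\initial$ and $Z'\to Z$ equal to $id_\initial$, whence $T'\cong T\in p\mv$ and $\chi=\eta_T=\eta_{T'}$. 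This gives (S).

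For Condition (N), take $f\colon A\to B$ and $k=\Zker(f)$. If $B=\terminal$ then $k=id_A$, $K[f]=A$, and $kt_{K[f]}=t_A$, which is the $\zeros$-kernel of $\eta_A$; so assume $B\neq\terminal$, write $I=\invf{0}$ and $K[f]=I\cup\lnot I$, and note that then $I\neq A$ and $A\neq\terminal$ (so in particular $K[f]\neq\terminal$). The crucial step is the identity $\Rad(K[f])=I\cap\Rad(A)$. For this I would first note that, since infinitesimality depends only on the operations and the order inherited from $A$, every subalgebra $M\subseteq A$ satisfies $\Rad(M)=M\cap\Rad(A)$ (using $\Rad(M)=\Inf(M)\cup\graffe{0}$); hence $\Rad(K[f])=(I\cup\lnot I)\cap\Rad(A)$. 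Then I would show that $\lnot I\cap\Rad(A)=\emptyset$: any element in it would be a nonzero infinitesimal $a$ with $a\leq\lnot a$ and $\lnot a\in I$, forcing $a\in I$ and therefore $1=a\oplus\lnot a\in I$, i.e.\ $I=A$, a contradiction. Setting $J\coloneqq I\cap\Rad(A)$, an ideal of $A$, we get $\T(K[f])=\P(K[f])=\Rad(K[f])\cup\lnot\Rad(K[f])=J\cup\lnot J$, so $kt_{K[f]}$ is exactly the inclusion $J\cup\lnot J\hookrightarrow A$, which is the $\zeros$-kernel of the quotient $A\to A/J$. Thus $kt_{K[f]}$ is a $\zeros$-kernel and (N) holds.

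The main obstacle is the radical computation inside (N): one must check that passing to the subalgebra $I\cup\lnot I$ does not enlarge the radical beyond $I\cap\Rad(A)$ and, in particular, that the negative part $\lnot I$ contributes nothing new. This is precisely where the hypothesis $B\neq\terminal$ (equivalently $I\neq A$) is used, and it is what makes the description $\T(K[f])=J\cup\lnot J$ coincide with an honest $\zeros$-kernel.
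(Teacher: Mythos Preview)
Your proposal is correct and follows essentially the same approach as the paper: Condition (M) via the general strict-terminal remark, Condition (S) by observing that the only arrows in $\zeros$ landing in $\F(T)$ are isomorphisms (the paper phrases this as $\S(A)=\terminal\iff A=\terminal$), and Condition (N) via the key identity $\Rad(I\cup\lnot I)=I\cap\Rad(A)$ obtained from $\Rad(B)=B\cap\Rad(A)$ for subalgebras together with the argument that an element of $\lnot I\cap\Rad(A)$ forces $I=A$. The only cosmetic difference is that the paper does not separate the case $B=\terminal$ in (N) but instead allows $I=A$ throughout, reaching the same conclusion.
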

		
		\begin{proof}
			Since $\mv$ is a regular category in which the terminal object is strict, then every torsion theory satisfies Condition (M) (as observed at the beginning of this section).
			We continue by noting that $\S(A) = \terminal$ if and only if $A = \terminal$: clearly, $A = \terminal$ implies $\S(A) = \terminal$; conversely, if $\S(A) = \terminal$, then $1 \in \Rad(A)$. Assume, for contradiction, that $1 \neq 0$; then $1 \in \Inf(A)$, which gives $1 \leq 0$ (and so $1=0$). This immediately implies the validity of (S): in fact, the only non-isomorphic arrow between zero objects is $\initial \to \terminal$, but we have just seen that $\S(A) = \terminal$ if and only if $A = \terminal$. Therefore, the validity of (S) reduces to observing that $\initial \in p\mv$. Let us finally show that Condition (N) holds. We consider a morphism $f$ and the diagram $\P(K[f]) \xrightarrow{t} K[f] \xrightarrow{k} A \xrightarrow{f} B$, where $k = \Zker(f)$ (in our non-pointed sense) and $t$ is the inclusion of the perfect part of $K[f]$. We know that $K[f] = I \cup \lnot I$, with $I = \invf{0}$.
			
			We aim to show that $\Rad(I \cup \lnot I) = I \cap \Rad(A)$. In general, it can be verified that if $B$ is a subalgebra of $A$, then $\Rad(B) = \Rad(A) \cap B$ (this fact essentially follows from the representation $\Rad(B) = \Inf(B) \cup \graffe{0}$). Thus, we have:
			$$\Rad(I \cup \lnot I)=(I \cup \lnot I) \cap \Rad(A)=(I \cap \Rad(A)) \cup (\lnot I \cap \Rad(A)).$$
			
			Next, we observe that if $\lnot I \cap \Rad(A)$ is non-empty, then $I = A$. Indeed, let $a \in \lnot I \cap \Rad(A)$. Then $a \leq \lnot a$ (since $a \in \Rad(A)$), and additionally, $\lnot a \in I$ (because $a \in \lnot I$). Therefore, from $a \leq \lnot a$, it follows that $a \in I$. Consequently, we have $a \oplus \lnot a = 1 \in I$, which implies that $I = A$.
			From this, we conclude that $\Rad(I \cup \lnot I) = I \cap \Rad(A)$ holds whether $\lnot I \cap \Rad(A)$ is empty (in which case it is obvious) or whether $\lnot I \cap \Rad(A)$ is non-empty (since, in this case, $A = I$). Finally, we note that
			$$\P(K[f])=\Rad(K[f]) \cup \lnot \Rad(K[f])=(I \cap \Rad(A)) \cup \lnot (I \cap \Rad(A)),$$
			and therefore, the inclusion of $\P(K[f])$ in $A$ can be seen as the $\zeros$-kernel of the quotient $A \to A/(I \cap \Rad(A))$.
		\end{proof}
		
		\begin{prop}
			The functor $\S \colon \mv \rightarrow s\mv$ is protoadditive.
		\end{prop}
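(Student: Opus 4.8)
The plan is to verify directly the defining property of a $\Z$-protoadditive functor: given a split epimorphism $p \colon A \to B$ with section $s$ and a morphism $h \colon C \to B$ (in particular one inverted by $\Z$, which is what the definition asks for), the functor $\S$ should send the pullback $P = A \times_B C$ to a pullback in $s\mv$. Realising $P$ as the subalgebra $\{(a,c) \in A\times C \mid p(a) = h(c)\}$ of $A\times C$, I would first observe that $\S(A)\times_{\S(B)}\S(C)$ is a subalgebra of a product of semisimple MV-algebras, hence semisimple, and therefore coincides with the pullback formed inside $s\mv$. So it is enough to show that the canonical comparison morphism $\phi \colon \S(P) \to \S(A)\times_{\S(B)}\S(C)$, $[(a,c)]\mapsto([a],[c])$, is an isomorphism.

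The key computation is that of $\Rad(P)$. From $\Rad(X) = \Inf(X)\cup\{0\}$ one immediately gets $\Rad(A\times C) = \Rad(A)\times\Rad(C)$, and combining this with the fact (already used in the proof of Proposition \ref{ideali in MV}) that $\Rad(S) = \Rad(T)\cap S$ for any subalgebra $S$ of $T$, we obtain $\Rad(P) = \{(a,c)\in\Rad(A)\times\Rad(C)\mid p(a)=h(c)\}$. Injectivity of $\phi$ then drops out: if $d(a,a')\in\Rad(A)$ and $d(c,c')\in\Rad(C)$ with $(a,c),(a',c')\in P$, then $d\big((a,c),(a',c')\big) = \big(d(a,a'),d(c,c')\big)$ lies in $(\Rad(A)\times\Rad(C))\cap P = \Rad(P)$, so $[(a,c)]=[(a',c')]$ in $\S(P)$.

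The step I expect to be the real obstacle is surjectivity of $\phi$: a compatible pair is a pair $([a],[c])$ with $d(p(a),h(c))\in\Rad(B)$, so $p(a)$ and $h(c)$ agree only modulo $\Rad(B)$, and one must adjust the representative $a$ inside its radical class to make the square commute. This is exactly where the section $s$ is used. Put $u := p(a)\ominus h(c)$ and $v := h(c)\ominus p(a)$; both lie below $d(p(a),h(c))$, hence belong to $\Rad(B)$, so $s(u),s(v)\in\Rad(A)$ because homomorphisms preserve radicals. Setting $a^{*} := (a\ominus s(u))\oplus s(v)$, the relation $ps = \mathrm{id}_B$ together with the standard MV-algebra identities $x\ominus(x\ominus y)=x\wedge y$ and $(x\wedge y)\oplus(y\ominus x)=y$ (see \cite{cignoli2013algebraic}) give $p(a^{*}) = (p(a)\ominus u)\oplus v = h(c)$, so $(a^{*},c)\in P$; and $[a^{*}] = [a]$ in $\S(A)$ since $s(u),s(v)$ vanish there. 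Hence $\phi([(a^{*},c)]) = ([a],[c])$, so $\phi$ is an isomorphism and $\S$ preserves the pullback. As a byproduct one sees that the argument never uses anything about $h$ beyond the existence of the section $s$ on $p$, so in fact $\S$ preserves every pullback of a split epimorphism, which certainly yields $\Z$-protoadditivity.
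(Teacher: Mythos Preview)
Your proof is correct and follows essentially the same approach as the paper's: compute $\Rad(P)=(\Rad(A)\times\Rad(C))\cap P$ via the subalgebra formula, deduce injectivity of the comparison map, and use the section $s$ to establish surjectivity. Your surjectivity step differs only cosmetically: you directly produce a single preimage $(a^{*},c)$ by correcting $a$ with $s(u),s(v)$, whereas the paper assembles the preimage as an MV-combination of the three elements $(a\ominus sp(a),0)$, $(sp(a)\ominus a,0)$, $(sg(c),c)$ already lying in $P$; both arguments exploit the same underlying identities and the same idea of pushing the radical discrepancy through the section.
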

		
		\begin{proof}
			It is clear that $\S(\terminal)=\terminal/\terminal=\terminal$ and $\S(\initial)=\initial/\{0\}=\initial$. Moreover, since $s\mv$ is a full reflective subcategory of $\mv$, we know that $s\mv$ is closed under limits. We consider, in $\mv$, the pullback of a split epimorphism along an arbitrary morphism
			\[\begin{tikzcd}
				{A \times_B C} & C \\
				A & B.
				\arrow["p", shift left=1, from=2-1, to=2-2]
				\arrow["s", shift left=1, from=2-2, to=2-1]
				\arrow["g", from=1-2, to=2-2]
				\arrow["{\pi_C}", from=1-1, to=1-2]
				\arrow["{\pi_A}"', from=1-1, to=2-1]
				\arrow["\lrcorner"{anchor=center, pos=0}, draw=none, from=1-1, to=2-2]
			\end{tikzcd}\]
			We observe that
			\begin{align*}
				\Rad(A \times_B C)&= \{ (a,c) \in A \times C \, | \, a \in \Rad(A), \, c \in \Rad(C), \, p(a)=g(c) \}\\
				&=\Rad(A \times C) \cap (A \times_B C).
			\end{align*}
			We now proceed to compute the pullback of $\S(p)$ along $\S(g)$ in $s\mv$:
			\[\begin{tikzcd}
				{\S(A \times_B C)} \\
				& {\S(A) \times_{\S(B)} \S(C)} & {\S(C)} \\
				& {\S(A)} & {\S(B).}
				\arrow["{\S(p)}", shift left=1, from=3-2, to=3-3]
				\arrow["{\S(s)}", shift left=1, from=3-3, to=3-2]
				\arrow["{\pi_{\S(A)}}"', shift right=1, from=2-2, to=3-2]
				\arrow["{\S(g)}", shift right=1, from=2-3, to=3-3]
				\arrow["{\pi_{\S(C)}}", shift left=1, from=2-2, to=2-3]
				\arrow["\lrcorner"{anchor=center, pos=0}, draw=none, from=2-2, to=3-3]
				\arrow["{\S(\pi_A)}"', curve={height=24pt}, from=1-1, to=3-2]
				\arrow["{\S(\pi_C)}", curve={height=-24pt}, from=1-1, to=2-3]
				\arrow["{\varphi}", from=1-1, to=2-2]
			\end{tikzcd}\]
			Using the fact that $\S(p)\S(\pi_A)=\S(p\pi_A)=\S(g\pi_C)=\S(g)\S(\pi_C)$, the universal property of this pullback defines an arrow $\varphi \colon \S(A \times_B C) \rightarrow \S(A) \times_{\S(B)} \S(C)$ such that $\varphi([a,c])=([a],[c])$. We will prove that $\varphi$ is an isomorphism. It is not difficult to see that $\varphi$ is injective: if $\varphi([a,c])=([0],[0])$, then we have $(a,c) \in A \times_B C$, $a \in \Rad(A)$, and $c \in \Rad(C)$. Thus, $(a,c) \in \Rad(A \times C) \cap (A \times_B C)$, and so $[a,c]=[0,0]$. Now, let us show that $\varphi$ is surjective. Let $([a],[c]) \in \S(A) \times_{\S(B)} \S(C)$. Since $\S(p)([a])=\S(g)([c])$, we have $[p(a)]=[g(c)]$, and so $[sp(a)]=[sg(c)]$. Our goal is to find $a' \in A$ and $c' \in C$ such that $p(a')=g(c')$ and $[a']=[a]$, $[c']=[c]$. First, we observe that $(sp(a) \ominus a, 0) \in A \times_B C$ and $(a \ominus sp(a), 0) \in A \times_B C$. By the definition of $\varphi$, we get $\varphi([sp(a) \ominus a, 0])=([sp(a) \ominus a],[0])$ and $\varphi([a \ominus sp(a), 0])=([a \ominus sp(a)],[0])$. Additionally, we have $(sg(c),c) \in A \times_B C$ and $\varphi([sg(c),c])=([sg(c)],[c])=([sp(a)],[c])$ (since $[sp(a)]=[sg(c)]$). Let us write $x \coloneqq [a \ominus sp(a), 0]$, $y \coloneqq [sp(a) \ominus a, 0]$,  and $z \coloneqq [sg(c),c]$, for simplicity. Recalling that $$(a \ominus sp(a)) \oplus ((a \oplus \lnot sp(a)) \odot sp(a))=a,$$ we get $$\varphi (x \oplus (\lnot y \odot z))=([a],[c]).$$ Thus, $\varphi$ is surjective, and so it is an isomorphism.
		\end{proof}
		
		\begin{oss}
			Let $B$ be a subalgebra of an MV-algebra $A$. We know that $\Rad(B) = B \cap \Rad(A)$. It follows that $B$ is perfect (i.e.\ $B = \P(B)$) if and only if $B=(B \cap \Rad(A)) \cup (B \cap \lnot \Rad(A))$. Therefore, if $B = I \cup \lnot I$, for some ideal $I$ of $A$, using what we observed in the proof of Proposition \ref{ideali in MV}, we have that $B$ is perfect if and only if $I \subseteq \Rad(A)$. Similarly, $B = I \cup \lnot I$ is semisimple if and only if $I \cap \Rad(A) = \{ 0 \}$.
		\end{oss}
		
		We are now ready to describe the factorization system induced by the torsion theory $(p\mv, s\mv)$. We begin by defining
		\[ \facte \coloneqq \{ e \colon A \rightarrow B \in \Arr(\mv) \, | \, e \text{ is surjective, } \inve \subseteq \Rad(A) \} \]
		and
		\[ \factm \coloneqq \{ m \colon A \rightarrow B \in \Arr(\mv) \, | \, \invm \cap \Rad(A)= \{0 \} \}. \]
		Thanks to the previous remark, we know that the arrows of $\facte$ all have a perfect $\zeros$-kernel, while the arrows of $\factm$ have a semisimple $\zeros$-kernel. Now, given an arrow $e \colon A \rightarrow B$ in $\facte$, it is easy to observe that $e$ is the $\zeros$-cokernel of the inclusion of $K[e]$ into $A$: indeed, since $e$ is surjective, we have $B \cong A/\inve$. Thus, $(\facte, \factm)$ is the desired factorization system. Specifically, given an arrow $f \in \Arr(\mv)$, the $(\facte, \factm)$-factorization is given by:
		\[\begin{tikzcd}
			A && B \\
			& {A/(\invf{0}\cap \Rad(A)),}
			\arrow["f", from=1-1, to=1-3]
			\arrow["e"', two heads, from=1-1, to=2-2]
			\arrow["m"', from=2-2, to=1-3]
		\end{tikzcd}\]
		where $e$ is the quotient of $A$ over the ideal $\invf{0} \cap \Rad(A)$, while $m$ is defined by the universal property of the quotient.
		
		Now, we shift our focus to the Galois structure induced by the torsion theory.
		We recall that in the category of MV-algebras the effective descent morphisms are exactly the regular epimorphisms. Hence, a surjective morphism $f$ of MV-algebras is a central extension for the Galois structure $\Gamma_{s\mv}$, induced by the torsion theory $(p\mv, s\mv)$, if and only if $K[f]$ is a semisimple algebra. Thanks to the previous remark, this last statement is equivalent to saying that $\invf{0} \cap \Rad(A) = \{0\}$.
		
		More details on this example can be found in \cite{CappellettiMV}.
		
		\subsection{$\mset$ and fix points}
		It is a well-known fact that, for every monoid $M$, the category $\mset$ of sets with a fixed action of $M$ is an elementary topos. Clearly, $\mset$ is a two-valued topos ($\Sub(\terminal)=\{ \terminal, \emptyset \}$). Given an object $X$ of $\mset$, we consider the set of fix points
		\[\Fix(X) \coloneqq \{ x \in X \, | \, mx=x \text{ for every } m \in M \}. \]
		We observe that, for every arrow $f \colon \terminal=\{ * \} \rightarrow X$ in $\mset$ and every
		$m \in M$, one has $mf(*)=f(m*)=f(*)$. Then, there is a bijection $$\Hom_{\mset}(\terminal, X) \cong \Fix(X).$$ Therefore, the objects of $\mset$ for which there exists a unique arrow $\terminal \rightarrow X$ are precisely the ones with exactly one fix point. In the light of what has just been said, we define two full subcategories of $\mset$ whose objects are $$\torf \coloneqq \{ X \in \mset \, | \, \Fix(X)=X \} \text{ and } \tort \coloneqq \{ X \in \mset \, | \, |\Fix(X)|\leq 1 \}.$$
		From this point on, we will consider as class of zero objects $\zeros \coloneqq \torf \cap \tort = \{ \terminal, \emptyset \}$.
		
		We begin by studying the $\zeros$-kernels and $\zeros$-cokernels. Given an arrow $f \colon A \rightarrow B$ (with $A \neq \emptyset$) in $\mset$, $q= \Zcoker(f)$ is defined by the following pushout diagram
		\[\begin{tikzcd}
			A & B \\
			\terminal & {Q[f].}
			\arrow["f", from=1-1, to=1-2]
			\arrow["q", from=1-2, to=2-2]
			\arrow[from=1-1, to=2-1]
			\arrow[from=2-1, to=2-2]
			\arrow["\lrcorner"{anchor=center, pos=0, rotate=180}, draw=none, from=2-2, to=1-1]
		\end{tikzcd}\]
		If $A = \emptyset$, we get $\Zcoker(\emptyset \rightarrow B)=id_B$. Additionally, since colimits in $\mset$ are computed as in $\mathbf{Set}$, we obtain that $Q[f] \cong B/f(A)$, where, given an object $X$ of $\mset$ and a subset $Y \subseteq X$ closed under the action of $M$, $X/Y$ is the $\mset$ obtained by contracting $Y$ to one point and defining the action of $M$ as the one induced by $X$.
		Finally, if there exists a unique arrow $\terminal \rightarrow B$, the $\zeros$-kernel $k$ of an arrow $f \colon A \rightarrow B$ is given by the following pullback diagram:
		\[\begin{tikzcd}
			{K[f]} & \terminal \\
			A & B.
			\arrow["f"', from=2-1, to=2-2]
			\arrow["k"', from=1-1, to=2-1]
			\arrow[from=1-1, to=1-2]
			\arrow[from=1-2, to=2-2]
			\arrow["\lrcorner"{anchor=center, pos=0}, draw=none, from=1-1, to=2-2]
		\end{tikzcd}\]
		
		\begin{prop}
			$\torsione$ is a torsion theory in $\mset^{op}$.
		\end{prop}
		
		\begin{proof}
			To prove our statement, we work in $\mset$. First of all, we consider $F \in \torf$ and $T \in \tort$. If $\Fix(T) \neq \emptyset$, then every arrow $h \colon F \rightarrow T$ factors as
			\[\begin{tikzcd}
				F && T \\
				& \terminal
				\arrow["h", from=1-1, to=1-3]
				\arrow[from=1-1, to=2-2]
				\arrow[from=2-2, to=1-3]
			\end{tikzcd}\]
			since, for every $x \in F$ and every $m \in M$, $mh(x)=h(mx)=h(x)=y$ (where $y$ is the only fix point of $T$). If $\Fix(T)= \emptyset$, then $F$ must be empty (since every image of a fix element is a fix element), and so $h \colon \emptyset \to T \in \zideal$. Hence $\Hom_{\mset}(F,T) \subseteq \zideal$. Now, given an object $X$ of $\mset$, we define $$\F(X) \coloneqq \Fix(X) \text{, } \T(X) \coloneqq X/\Fix(X),$$ and we have the sequence
			\[\begin{tikzcd}
				{\F(X)} & X & {\T(X)}
				\arrow["{t_X}", from=1-1, to=1-2]
				\arrow["{\eta_X}", from=1-2, to=1-3]
			\end{tikzcd}\]
			where $t_X$ is the inclusion and $\eta_X$ the quotient projection. Clearly $\Fix(\Fix(X))=\Fix(X)$ and $|\Fix(X/\Fix(X))| \leq 1$ (it is exactly $1$ when $\Fix(X) \neq \emptyset$, and the unique fix point is $[x]$ with $x \in \Fix(X)$). We prove that the sequence above is exact. If $\Fix(X) \neq \emptyset$, the following square (which we know to be a pushout, thanks to the description of $\zeros$-cokernels)  is a pullback
			\[\begin{tikzcd}
				{\Fix(X)} & X \\
				\terminal & {X/\Fix(X).}
				\arrow["{\eta_X}", from=1-2, to=2-2]
				\arrow[from=2-1, to=2-2]
				\arrow[from=1-1, to=2-1]
				\arrow["{t_X}", from=1-1, to=1-2]
				\arrow["\lrcorner"{anchor=center, pos=0, rotate=180}, draw=none, from=2-2, to=1-1]
			\end{tikzcd}\]
			To show this, we consider an arrow $g \colon Y \rightarrow X$ such that $\eta_Xg(y)=[x]$, for an arbitrary $x \in \Fix(X)$. Then $g(y) \in \Fix(X)$, and so $g$ restricts to $\Fix(X)$; hence we get $t_X = \Zker(\eta_X)$ and $\eta_X= \Zcoker(t_X)$. If $\Fix(X)= \emptyset$, we have the following sequence 
			\[\begin{tikzcd}
				\emptyset & X & X,
				\arrow["{id_X}", from=1-2, to=1-3]
				\arrow["{\ibang{X}}", from=1-1, to=1-2]
			\end{tikzcd}\]
			where $id_X= \Zcoker(\ibang{X})$ (thanks to the description of $\zeros$-cokernels), and $\ibang{X}= \Zker(id_X)$ (since every arrow $g \in \zideal$ with codomain $X$ must have $\emptyset$ as domain, otherwise $X$ would have at least one fix point).
		\end{proof}
		
		Since the category $\mset^{op}$ satisfies the assumptions introduced at the beginning of this section, we can conclude that $\torsione$ satisfies Condition (M).
		
		\begin{prop}
			The torsion theory $\torsione$ in $\mset^{op}$ satisfies Conditions (S).
		\end{prop}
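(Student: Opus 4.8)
\emph{Proof plan.} The plan is to move Condition (S) through the duality $\mset\leftrightarrow\mset^{op}$ and then conclude by a short case analysis on the (at most two) zero objects involved. First I would fix the dictionary: a pullback in $\mset^{op}$ of the shape occurring in Condition (S) is a pushout in $\mset$, and for a torsion object $T\in\tort$ the unit $\eta_T\colon T\to Z$ of the reflection into $\torf$ (computed in $\mset^{op}$) corresponds in $\mset$ to the inclusion $\Fix(T)\hookrightarrow T$ of the fixed-point subset; in particular $Z=\Fix(T)$, which lies in $\zeros=\{\terminal,\emptyset\}$ precisely because $T\in\tort$ means $|\Fix(T)|\le 1$. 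More generally, for any $X\in\tort$ the chosen short exact sequence in $\mset^{op}$ has the form $X\xrightarrow{id}X\xrightarrow{\eta_X}\F(X)$ with $\F(X)=\Fix(X)\in\zeros$, and $\eta_X$ is the arrow corresponding in $\mset$ to the inclusion $\Fix(X)\hookrightarrow X$; this is just the reverse, in $\mset^{op}$, of the sequence $\Fix(X)\xrightarrow{t_X}X\xrightarrow{\eta_X}X/\Fix(X)$ of $\mset$, noting that collapsing a subset of size at most one changes nothing.

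Next, starting from a Condition (S) pullback in $\mset^{op}$ with $T\in\tort$, $Z,Z'\in\zeros$ and bottom arrow $\eta_T$, I would rewrite it as the pushout in $\mset$ of the span $T\hookleftarrow\Fix(T)\xrightarrow{w}Z'$ and compute this pushout set-theoretically, using that colimits in $\mset$ are created in $\set$. Since there is no morphism $\terminal\to\emptyset$ in $\mset$, only three cases occur: (i) $\Fix(T)=\emptyset$ and $Z'=\emptyset$, where $T'=T$; (ii) $\Fix(T)=\emptyset$ and $Z'=\terminal$, where $T'=T\sqcup\terminal$; (iii) $\Fix(T)=\terminal$ and $Z'=\terminal$, where again $T'=T$, the added point being glued to the unique fixed point of $T$. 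In each case I would check $|\Fix(T')|\le 1$ — in (ii) the new point is the unique fixed point, in (i) and (iii) one has $\Fix(T')=\Fix(T)$ — so $T'\in\tort$; and I would check that the pushout coprojection $Z'\to T'$ is exactly the inclusion $\Fix(T')\hookrightarrow T'$, i.e.\ the arrow corresponding under the duality to $\eta_{T'}$. Dualizing back then yields $\chi=\eta_{T'}$ in $\mset^{op}$, which is what Condition (S) demands.

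The step I expect to be least automatic is the duality bookkeeping: correctly seeing that $\eta_T$ becomes the fixed-point \emph{inclusion} (rather than a quotient), that the codomain $Z$ of $\eta_T$ is forced to be $\Fix(T)$, and that the two pushout coprojections in $\mset$ match up, respectively, with the leg $\chi$ and the unnamed structural arrow of the Condition (S) square. Once this is pinned down, the verification in each of the three cases is immediate, since everything reduces to the behaviour of disjoint unions and one-point collapses of $M$-sets, already used earlier in the description of $\zeros$-kernels and $\zeros$-cokernels in $\mset$.
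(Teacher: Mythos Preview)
Your proposal is correct and follows essentially the same approach as the paper: dualize to $\mset$, identify $\eta_T$ with the inclusion $\Fix(T)\hookrightarrow T$, and reduce Condition (S) to a pushout computation over the few possible arrows in $\zeros=\{\emptyset,\terminal\}$. The paper dismisses your cases (i) and (iii) as trivial and handles only your case (ii), via the identity $\Fix(T+\terminal)=\Fix(T)+\terminal=\terminal$, which is exactly your check that the added point is the unique fixed point.
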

		
		\begin{proof}
			Again we work, dually, in $\mset$. We recall that both limits and colimits in $\mset$ are computed as in $\mathbf{Set}$. Moreover, in general, for every pair $A,B$ of objects of $\mset$ one has $\Fix(A+B)=\Fix(A)+\Fix(B)$. Therefore, in order to show that Condition (S) holds, the only non-trivial case we need to study is the one involving the pushout
			\[\begin{tikzcd}
				\emptyset & \terminal \\
				T & {T+\terminal,}
				\arrow[from=1-1, to=1-2]
				\arrow[from=1-1, to=2-1]
				\arrow[from=1-2, to=2-2]
				\arrow[from=2-1, to=2-2]
				\arrow["\lrcorner"{anchor=center, pos=0, rotate=180}, draw=none, from=2-2, to=1-1]
			\end{tikzcd}\]
			where $\Fix(T)=\emptyset$. The validity of (S) follows from $\Fix(T + \terminal) = \Fix(T) + \terminal = \terminal$.
		\end{proof}
		
		\begin{prop}
			The torsion theory $\torsione$ in $\mset^{op}$ satisfies Condition (N).
		\end{prop}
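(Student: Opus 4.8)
The plan is to argue dually, working in $\mset$ exactly as in the proof of Condition (S), using that limits and colimits in $\mset$ are computed as in $\Set$. Recall the description of $\zeros$-cokernels in $\mset$: for an arrow $f \colon B \to A$ with $B \neq \emptyset$, $\Zcoker(f)$ is the quotient $q \colon A \to A/f(B) =: C$ collapsing the $M$-stable subset $f(B)$ to a single point, while $\Zcoker(\emptyset \to A) = \mathrm{id}_A$. Tracing through the dualization (in $\op{\mset}$ the object $K[f]$ is the $\zeros$-cokernel object $C$ in $\mset$, the torsion coreflector $\T$ corresponds to the torsion-free reflector $X \mapsto X/\Fix(X)$ of $\mset$, the counit $t_{K[f]}$ corresponds to $\eta_C \colon C \to C/\Fix(C)$, and $k$ corresponds to $q$), Condition (N) becomes the following statement in $\mset$: for every arrow $f \colon B \to A$, writing $q = \Zcoker(f) \colon A \to C$ and $\eta_C \colon C \to C/\Fix(C) = \T(C)$, the composite $\eta_C q \colon A \to C/\Fix(C)$ is a $\zeros$-cokernel of some arrow of $\mset$.

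First I would dispose of the case $B = \emptyset$: there $q = \mathrm{id}_A$, $C = A$, and $\eta_C q = \eta_A$ is the $\zeros$-cokernel of the inclusion $\Fix(A) \hookrightarrow A$ (or of $\emptyset \to A$ when $\Fix(A) = \emptyset$). Assuming $B \neq \emptyset$, the key computation is $\Fix(C)$ for $C = A/f(B)$: the collapsed point $\ast := q(f(B))$ is fixed because $f(B)$ is $M$-stable, and a point $[a]$ with $a \notin f(B)$ is fixed in $C$ iff $a \in \Fix(A)$, since $q$ is injective and $M$-equivariant off $f(B)$ (so $[ma] = [a]$ forces $ma = a$). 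Hence $\Fix(C) = q(f(B) \cup \Fix(A))$, and therefore $q^{-1}(\Fix(C)) = f(B) \cup \Fix(A)$. The elementary but crucial point is that $f(B) \cup \Fix(A)$ is an $M$-stable subset of $A$, being a union of the $M$-stable subsets $f(B)$ (image of a morphism) and $\Fix(A)$ (trivially $M$-stable).

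Combining these, $\eta_C q$ is a surjection which, as a map of sets, identifies $x, y \in A$ exactly when $x = y$ or $x, y \in f(B) \cup \Fix(A)$; hence it induces an $M$-equivariant isomorphism $C/\Fix(C) \cong A/(f(B) \cup \Fix(A))$, exhibiting $\eta_C q$ as the quotient of $A$ collapsing the nonempty (since $B \neq \emptyset$) $M$-stable subset $f(B) \cup \Fix(A)$. Thus $\eta_C q$ is a $\zeros$-cokernel of the inclusion $f(B) \cup \Fix(A) \hookrightarrow A$, and dually $k\, t_{K[f]}$ is a $\zeros$-kernel in $\op{\mset}$, which is Condition (N). I expect the only real obstacle to be bookkeeping: carrying the dualization out carefully (identifying which functor is $\T$ and which morphism is $t_{K[f]}$ in $\op{\mset}$) and checking that $C$ acquires no fixed points beyond $\ast$ and the images of the original fixed points of $A$.
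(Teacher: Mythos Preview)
Your proposal is correct and follows essentially the same route as the paper: dualize to $\mset$, dispose of the empty case, and show that the double quotient $(A/f(B))/\Fix(A/f(B))$ is canonically isomorphic to $A/(f(B)\cup\Fix(A))$, so that $\eta_C q$ is the $\zeros$-cokernel of the inclusion $f(B)\cup\Fix(A)\hookrightarrow A$. The only cosmetic difference is that the paper constructs the comparison map explicitly and checks well-definedness and injectivity case by case, whereas you reach the same conclusion by first computing $\Fix(C)=q(f(B)\cup\Fix(A))$ and then reading off $q^{-1}(\Fix(C))$.
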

		
		\begin{proof}
			We work, once more, in $\mset$. Given an arrow $f \colon A \rightarrow B$ we consider
			\[\begin{tikzcd}
				A & B & {Q[f]=B/f(A)} & {\T(Q[f])=(B/f(A))/\Fix(B/f(A))}
				\arrow["f", from=1-1, to=1-2]
				\arrow["q", from=1-2, to=1-3]
				\arrow["{\eta_{Q[f]}}", from=1-3, to=1-4]
			\end{tikzcd}\]
			and we show that $\eta_{Q[f]}q$ is a $\zeros$-cokernel. If $A= \emptyset$, the assertion is trivial. If $A \neq \emptyset$, we have that $$(B/f(A))/\Fix(B/f(A)) \cong B/(f(A) \cup \Fix(B))$$ where the isomorphism is given by $\overline{\varphi} \colon (B/f(A))/\Fix(B/f(A)) \rightarrow B/(f(A) \cup \Fix(B))$ induced by $\varphi \colon B /f(A) \rightarrow B/(f(A) \cup \Fix(B))$, where $\varphi([b]) \coloneqq (b)$ ($[b]$ denotes the class of $b$ with respect to the quotient on $f(A)$, while $(b)$ denotes the class of $b$ with respect to the quotient on $f(A) \cup  \Fix(B)$). Clearly $\varphi$ is well defined and surjective. We observe that $\varphi([b])=\varphi([c])$ implies $b,c \in f(A) \cup \Fix(B)$: if $b,c \in f(A)$, then $[b]=[c]$; if $b \in f(A)$ and $c \in \Fix(B)$, then $[b] \in \Fix(B/f(A))$ (since every element coming from $A$ via $f$ is a fix point of $B/f(A)$) and $[c] \in \Fix(B/f(A))$; if $b,c \in \Fix(B)$, then $[b], [c] \in \Fix(B/f(A))$. Therefore $\overline{\varphi}$ is also injective and $\T(Q[f])$ is the $\zeros$-cokernel of $f(A) \cup \Fix(B) \hookrightarrow B$.
		\end{proof}
		
		We are ready to describe the stable factorization system induced by $\torsione$ on $\mset^{op}$. In order to simplify the argument, we will work again in $\mset$. We get that
		\[ \facte= \{ e \colon A \rightarrow B \in \Arr(\mset) \, | \, e \text{ is a $\zeros$-kernel, } B/e(A) \in \tort \} \]
		and
		\[ \factm = \{ m \colon A \rightarrow B \in \Arr(\mset) \, | \, B/m(A) \in \torf \}. \]
		Hence, if we consider an arrow $f \colon A \rightarrow B$, the $(\factm, \facte)$-factorization of $f$ is given by
		\[\begin{tikzcd}
			A && B \\
			& {f(A) \cup \Fix(B),}
			\arrow["f", from=1-1, to=1-3]
			\arrow["m"', from=1-1, to=2-2]
			\arrow[tail, "e"', from=2-2, to=1-3]
		\end{tikzcd}\]
		where the morphism $m\in \factm$ is obtained by restricting the codomain of $f$, while $e\in \facte$ is the inclusion map. In order to see that $e$ is a $\zeros$-cokernel, consider the pullback (assume $A \neq \terminal$, otherwise the statement is trivial)
		\[\begin{tikzcd}
			{f(A) \cup \Fix(B)} & \terminal \\
			B & {B/(f(A) \cup \Fix(B)),}
			\arrow[from=1-1, to=1-2]
			\arrow["r", from=1-2, to=2-2]
			\arrow["e"', from=1-1, to=2-1]
			\arrow[""{name=0, anchor=center, inner sep=0}, "q"', from=2-1, to=2-2]
			\arrow["\lrcorner"{anchor=center, pos=0}, draw=none, from=1-1, to=0]
		\end{tikzcd}\]
		where $r$ is defined and unique since $|\Fix(B/(f(A) \cup \Fix(B)))|=1$.
		
		\begin{prop}
			The reflector $\F$ induced by the torsion theory $\torsione$ in $\mset^{op}$ is protoadditive.
		\end{prop}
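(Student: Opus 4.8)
The plan is to argue dually, in $\mset$, as in the preceding proofs. Under the duality $\mset^{op}\leftrightarrow\mset$ the reflector $\F$ onto the torsion-free subcategory $\torf$ of $\mset^{op}$ is the functor $X\mapsto\Fix(X)$ (the coreflector of $\mset$ onto $\torf$), a split epimorphism of $\mset^{op}$ is a split monomorphism of $\mset$, and a pullback of $\mset^{op}$ is a pushout of $\mset$; moreover the functor $\Z$ of $\mset^{op}$ fixes the empty $\mset$ and sends every other object to $\terminal$, so an arrow of $\mset^{op}$ is inverted by $\Z$ unless it is dual to a canonical map $\emptyset\to X$ with $X$ non-empty. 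Thus protoadditivity of $\F$ amounts to the assertion that, in $\mset$, $\Fix$ carries every pushout of a split monomorphism $s\colon A\rightarrowtail B$ along a $\Z$-inverted arrow $g\colon A\to C$ to a pushout; and since colimits of $\mset$ are computed as in $\set$ and the full subcategory $\torf$ is closed under colimits in $\mset$, this may be verified set-theoretically.

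I would first dispatch the degenerate case $A=\emptyset$: then $C=\emptyset$ ($g$ being $\Z$-inverted) and $B=\emptyset$ ($s$ being a split monomorphism out of $\emptyset$), so the pushout is empty and the claim is vacuous. For $A\ne\emptyset$, all of $B$, $C$ and $P\coloneqq B+_A C$ are non-empty; as $s$ is injective, the only identifications made in $P$ are $s(a)\sim g(a)$ ($a\in A$), whence the pushout injections $g'\colon B\to P$, $s'\colon C\to P$ satisfy: $s'$ is injective, $g'$ is injective off $s(A)$, $g'^{-1}(s'(C))=s(A)$ and $P=g'(B)\cup s'(C)$.

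The heart of the argument is then to compute $\Fix(P)$. An element $s'(c)$ is $M$-fixed iff $c\in\Fix(C)$, and an element $g'(b)$ with $b\notin s(A)$ is $M$-fixed iff $b\in\Fix(B)$; combined with $\Fix(B)\cap s(A)=s(\Fix(A))$ this gives $\Fix(P)=g'(\Fix(B))\cup s'(\Fix(C))$ with $g'(\Fix(B))\cap s'(\Fix(C))=g'(s(\Fix(A)))$. From this description the canonical comparison $\Fix(B)+_{\Fix(A)}\Fix(C)\to\Fix(P)$ induced by $\Fix(g')$ and $\Fix(s')$ is surjective (its image exhausts $\Fix(P)$) and injective (the only coincidences between its two legs are those forced through $\Fix(A)$, and each leg is injective away from the image of $\Fix(A)$), hence a bijection. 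So $\Fix$ preserves the pushout and, dually, $\F$ preserves the pullback, i.e.\ $\F$ is protoadditive.

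I expect the last step — bookkeeping which classes of the set-level pushout are $M$-fixed, and checking that the comparison map is bijective — to be the only point demanding care; the duality dictionary and the reduction to a set-theoretic statement are routine. (The same computation in fact shows that $\Fix$ preserves pushouts of \emph{all} monomorphisms along $\Z$-inverted arrows, which is more than protoadditivity requires.)
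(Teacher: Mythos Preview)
Your proof is correct and follows the same overall strategy as the paper: dualize to $\mset$, reduce to showing $\Fix$ preserves the relevant pushouts, and verify this by a set-level computation of the fixed points of the pushout. The paper carries out the computation via an explicit normal form $(Z\setminus f(X))+(Y/\!\sim)$ for the pushout and a case-by-case construction of the comparison map, whereas you argue more abstractly using the properties of the pushout injections ($s'$ injective, $g'$ injective off $s(A)$, $g'^{-1}(s'(C))=s(A)$); this makes your verification of bijectivity shorter and, as you observe, shows that only injectivity of $s$ is needed, while the paper's argument invokes the retraction $p$ in its case analysis. Conversely, the paper does not restrict to $\Z$-inverted $g$ and thereby shows $\Fix$ preserves pushouts of split monomorphisms along \emph{all} maps. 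Both routes yield strictly more than protoadditivity requires.
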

		
		\begin{proof}
			Again, we work in $\mset$. Clearly $\F(\emptyset)=\emptyset$ and $\F(\terminal)=\terminal$. We prove that $\F$ preserves the pushouts of split monomorphisms. Consider the following pushout:
			\[\begin{tikzcd}
				X & Y \\
				Z & P,
				\arrow["{i_Z}"', from=2-1, to=2-2]
				\arrow["{i_Y}", from=1-2, to=2-2]
				\arrow["s", shift left=1, from=1-1, to=1-2]
				\arrow["p", shift left=1, from=1-2, to=1-1]
				\arrow["f"', from=1-1, to=2-1]
				\arrow["\lrcorner"{anchor=center, pos=0, rotate=180}, shift right=1, draw=none, from=2-2, to=1-1]
			\end{tikzcd}\]
			where $ps=id_X$. We define on $Y$ the following relation: $y \sim w$ if and only if $y=w$ or there exist $x,t \in X$ such that $y=s(x)$, $w=s(t)$, and $f(x)=f(t)$. Let us prove that $\sim$ is an equivalence relation preserved by the action of $M$. It is clear that $\sim$ is both reflexive and symmetric. We deal with the transitivity: suppose $y \sim w$ and $w \sim r$; clearly if $y=w$ or $w=r$ the property holds, so suppose $y=s(x)$, $w=s(t)$ with $f(x)=f(t)$ and $w=s(\overline{t})$, $r=s(q)$ with $f(\overline{t})=f(q)$; since $s$ is injective we deduce $t= \overline{t}$, and so $y \sim r$. Finally, if $y \sim w$ and $y \neq w$ then $y=s(x)$, $w=s(t)$ with $f(x)=f(t)$ and, therefore, $my=s(mx)$, $mw=s(mt)$ with $f(mx)=f(mt)$, i.e.\ $my \sim mw$. We prove that, in the category $\mathbf{Set}$, the following square is a pushout:
			\[\begin{tikzcd}
				X & Y \\
				Z & {(Z \setminus f(X))+(Y/\sim),}
				\arrow["s", from=1-1, to=1-2]
				\arrow["f"', from=1-1, to=2-1]
				\arrow["{i_2}", from=1-2, to=2-2]
				\arrow["{i_1}"', from=2-1, to=2-2]
			\end{tikzcd}\]
			where $i_1(z)=z$ if $z \in Z \setminus f(X)$, $i_1(z)=[s(x)]$ if $z=f(x)$ (if $f(x_1)=f(x_2)$ then, by definition, $s(x_1) \sim s(x_2)$), and $i_2(y)=[y]$. The square is commutative: $i_1f(x)=[s(x)]=i_2(s(x))$. Now, let us consider a pair of arrows $a \colon Z \rightarrow A$ and $b \colon Y \rightarrow A$ such that $af=bs$; we define $\varphi \colon (Z \setminus f(X))+(Y/\sim) \rightarrow A$ by putting
			$$\varphi(e)=
			\begin{cases}
				a(z) & \text{if } e=z \in Z \setminus f(X) \\
				b(y) & \text{if } e=[y] \in Y/\sim.
			\end{cases}$$
			$\varphi$ is well defined: if $[y]=[w]$ and $y \neq w$, then $y=s(x)$, $w=s(t)$ and $f(x)=f(t)$, hence $b(y)=bs(x)=af(x)=af(t)=b(w)$. The uniqueness is guaranteed because $i_1,i_2$ are jointly epimorphic ($i_1(Z) \supseteq Z \setminus f(X)$ and $i_2(Y)=Y/\sim$). With the action of $M$ induced by $i_1$ and $i_2$ we deduce that the square above is a pushout also in $\mset$. Now, we observe that $\Fix((Z \setminus f(X))+(Y/\sim))=\Fix(Z \setminus f(X))+ \Fix(Y/\sim)$. So, we study the set $\Fix(Y/ \sim)$. We consider an element $[y] \in \Fix(Y/\sim)$ such that $y \notin \Fix(Y)$; then, for every $m \in M$ such that $my \neq y$, since $my \sim y$, there exists an element $x \in X$ such that $y=s(x)$ (and, since $s$ is injective, this element $x$ does not depend on $m$) and an element $x' \in X$ such that $my=s(x')$ with $f(x)=f(x')$. But $s$ is injective, hence, from $s(x')=my=ms(x)=s(mx)$, we deduce $x'=mx$ and then $f(x)=mf(x)$. Moreover, for every element $m \in M$ such that $my=y$, we get $ms(x)=s(x)$ and, applying $p$, we obtain $mx=x$ and so $mf(x)=x$. In other words, if $y \notin \Fix(Y)$ there exists a unique $x \in X$ such that $y=s(x)$ and $f(x) \in \Fix(Z)$. We are ready to prove that the commutative diagram below is a pushout (i.e.\ $\F$ is protoadditive)
			\[\begin{tikzcd}
				{\Fix(X)} & {\Fix(Y)} \\
				{\Fix(Z)} & {\Fix(Z \setminus f(X))+ \Fix(Y/\sim),}
				\arrow["{i_1}"', from=2-1, to=2-2]
				\arrow["{i_2}", from=1-2, to=2-2]
				\arrow["s", from=1-1, to=1-2]
				\arrow["f"', from=1-1, to=2-1]
			\end{tikzcd}\]
			where, with a slight abuse of notation, we have used the same symbols to denote an arrow and its image through $\F$. We consider a pair of arrows in $\torf$ (i.e.\ maps) $a \colon \Fix(Z) \rightarrow A$ and $b \colon \Fix(Y) \rightarrow A$ such that $af=bs$, and we define \linebreak $\varphi \colon \Fix(Z \setminus f(X))+ \Fix(Y/\sim) \rightarrow A$ by putting
			$$\varphi(e)=
			\begin{cases}
				a(z) & \text{if } e=z \in \Fix(Z \setminus f(X)) \\
				b(y) & \text{if } e=[y], \text{with } y\in \Fix(Y) \\
				af(x) & \text{if } e=[y], \text{with } y \notin \Fix(Y),
			\end{cases}$$
			where the third part makes sense because $y=s(x)$ and $f(x) \in \Fix(Z)$. In order to show that $\varphi$ is well defined we consider $[y], [w] \in \Fix(Y /\sim)$ such that $[y]=[w]$ and we distinguish three cases:
			\begin{itemize}
				\item $y,w \in \Fix(Y)$ (with $y \neq w$, otherwise the assertion easily holds): then there exist $x,t \in X$ such that $y=s(x)$, $w=s(t)$, and $f(x)=f(t)$; hence, applying $p$, we deduce $x,t \in \Fix(X)$, and then $\varphi([y])=b(y)=bs(x)=af(x)=af(t)=bs(t)=b(w)=\varphi([w])$.
				\item $y \in \Fix(Y)$ and $w \notin \Fix(Y)$: then there exist $x,t \in X$ such that $y=s(x)$ (and so, since $y \in \Fix(Y)$, $x \in \Fix(X)$), $w=s(t)$, $f(t) \in \Fix(Z)$, and $f(x)=f(t)$. Since $x \in \Fix(X)$ we can use the commutativity of the square to get $bs(x)=af(x)$, and so $\varphi([y])=bs(x)=af(x)=af(t)=\varphi([w])$.
				\item $y,w \notin \Fix(Y)$: then there exist $x,t \in X$ such that $y=s(x)$, $w=s(t)$, $f(x),f(t) \in \Fix(Z)$, and $f(x)=f(t)$; therefore $\varphi([y])=af(x)=af(t)=\varphi([w])$.
			\end{itemize}
			Finally, $i_1,i_2$ are jointly epimorphic. To show it we deal with the only non-trivial case: we consider an element $[y] \in \Fix(Y/\sim)$ such that $y \notin \Fix(Y)$; therefore there exists $x \in X$ such that $y=s(x)$ and $f(x) \in \Fix(Z)$, and then $i_1(f(x))=[s(x)]=[y]$. Hence $\varphi$ is unique and $\F$ is protoadditive.
		\end{proof}
		
		We observe that, in general, the functor $\F$ does not preserve colimits. For example, if we consider an object $A$ of $\mset$ such that $A \neq \emptyset$ and $\Fix(A)= \emptyset$, then $\tbang{A} \colon A \rightarrow \terminal$ is an epimorphism but $\F(\tbang{A}) \colon \emptyset \rightarrow \terminal$ is not.
		
		We can now describe the central extensions for the Galois structure $\Gamma_{\torf}$ induced by $\torsione$ in $\mset^{op}$. We apply Theorem \ref{centrali e torsione} and we get that a regular epimorphism (i.e.\ an effective descent morphism in a Barr-exact context) of $\mset^{op}$ is a central extension if and only if $K[f]$ is an object of $\torf$. If we dually translate what has just been said we get that a regular monomorphism $f$ in $\mset$, seen as an arrow of $\mset^{op}$, is a central extension for $\Gamma_{\torf}$ if and only if $Q[f] \in \torf$ (where $Q[f]$ is the codomain of the $\zeros$-cokernel of $f$ in $\mset$). Finally, recalling that limits in $\mset$ are computed as in $\mathbf{Set}$, we get that a regular epimorphism $f$ of $\mset^{op}$ is a central extension if and only if $f \colon A \rightarrow B$, considered as an arrow of $\mset$, is a monomorphism and $B/f(A) \in \torf$ (or, equivalently, if $B=\Fix(B) \cup f(A)$).
		
		\subsection{Double negation in Heyting algebras}
		We recall that a \emph{Heyting algebra} is an algebraic structure $(H, \lor, \land, 1, 0, \Rightarrow)$ such that $(H, \lor, \land, 1, 0)$ is a bounded lattice and the binary operation $\Rightarrow$ satisfies
		\[ x \land y \leq z \text{ if and only if } x \leq y \Rightarrow z. \]
		Given a Heyting algebra $H$, let $\dn{H}$ denote the set of regular elements of $H$. An element $x \in H$ is said to be \emph{regular} if $\lnot \lnot x= x$ (recalling that $\lnot x \coloneqq x \Rightarrow 0$). It is a known fact that $(\dn{H}, \dn{\lor}, \land, 0, 1, \Rightarrow)$ is a Boolean algebra, where $$x \dn{\lor} y \coloneqq \lnot (\lnot x \land \lnot y).$$
		Therefore, this construction defines a functor
		\[\begin{tikzcd}[row sep=5pt]
			\heyting & \boole \\
			H & {\dn{H}} \\
			\\
			\\
			L & {\dn{L},}
			\arrow["f"', from=2-1, to=5-1]
			\arrow[maps to, from=2-1, to=2-2]
			\arrow[maps to, from=5-1, to=5-2]
			\arrow["{\F(f)}", from=2-2, to=5-2]
			\arrow["\F", from=1-1, to=1-2]
		\end{tikzcd}\]
		where $\F(f)$ is simply the restriction of $f$ to $\dn{H}$.
		Since it is true that $\lnot \lnot (\lnot \lnot x) = \lnot \lnot x$, we can define the map
		\[ \myfunc{\lnot \lnot}{H}{\dn{H}}{x}{\lnot \lnot x.} \]
		It is known that this map is a surjective morphism of Heyting algebras. Specifically, this morphism is the $H$-component of the unit of the adjunction $\F \dashv \i$, where $\i \colon \boole \rightarrow \heyting$ is the inclusion functor.
		
		A Heyting algebra $H$ is \emph{pseudo-deterministic} (we thank Mariano Messora for the suggestion of the name) if, for every $x \in H$, either $\lnot x= 1 \text{ or } \lnot x=0.$ Given a Heyting algebra $H$, we define $\T(H) \coloneqq \{ x \in H \,|\, \lnot x=0 \text{ or } \lnot x =1 \}$. We recall that, in every Heyting algebra, the equations $x \Rightarrow 1=1$, $\lnot (x \land y)=x \Rightarrow \lnot y$, $\lnot (x \lor y)= \lnot x \land \lnot y$, and $\lnot (x \Rightarrow y)= \lnot \lnot x \land \lnot y$ hold (a proof of these identities can be found in any book about Heyting algebras and intuitionistic logic). These identities can be used to prove that $\T(H)$ is a Heyting algebra whose operations are induced by those of $H$. In particular, we get
		$$\displaylines{\lnot (x \lor y)=
			\begin{cases}
				1 & \text{if } \lnot x = 1, \lnot y=1 \\
				0 & \text{otherwise}
			\end{cases}
			\qquad \qquad
			\lnot (x \land y)=
			\begin{cases}
				0 & \text{if } \lnot x = 0, \lnot y =0 \\
				1 & \text{otherwise}
			\end{cases}
			\cr
			\lnot (x \Rightarrow y)=
			\begin{cases}
				1 & \text{if } \lnot x = 0, \lnot y =1 \\
				0 & \text{otherwise.}
		\end{cases}}$$
		So, $\T(H)$ is a pseudo-deterministic Heyting algebra. We denote by $\pd$ the full subcategory of $\heyting$ whose objects are the pseudo-deterministic Heyting algebras. Moreover, the assignment described above establishes a functor
		\[\begin{tikzcd}[row sep=5pt]
			\heyting & \pd \\
			H & {\T(H)} \\
			\\
			\\
			L & {\T(L),}
			\arrow["f"', from=2-1, to=5-1]
			\arrow[maps to, from=2-1, to=2-2]
			\arrow[maps to, from=5-1, to=5-2]
			\arrow["{\T(f)}", from=2-2, to=5-2]
			\arrow["\T", from=1-1, to=1-2]
		\end{tikzcd}\]
		where $\T(f)$ is given by the restriction of $f$ to $\T(H)$. It is easy to observe that the inclusion of $\T(H)$ in $H$ is the $H$-component of the counit of the adjunction $\j \dashv \T$, where $\j \colon \pd \rightarrow \heyting$ is the inclusion functor.
		
		Since, in every Boolean algebra, $\lnot x =0$ implies $x=1$ and $\lnot x= 1$ implies $x=0$, we deduce that $$\boole \cap \pd=\{ \terminal, \initial \}.$$ From this point on, we will consider as class of zero objects $\zeros \coloneqq \boole \cap \pd=\{ \terminal, \initial \}$.
		
		\begin{prop}
			$(\pd,\boole)$ is a torsion theory in $\heyting$. Moreover, this torsion theory satisfies Conditions (S), (M) and (N).
		\end{prop}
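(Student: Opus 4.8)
The plan is to read the torsion theory off the two adjunctions already constructed above: the reflection $\F\colon\heyting\to\boole$, $H\mapsto\dn H$, with unit $\eta=\lnot\lnot$, and the coreflection $\T\colon\heyting\to\pd$ with counit the inclusions $t_H\colon\T(H)\hookrightarrow H$. The equality $\pd\cap\boole=\zeros=\{\initial,\terminal\}$ is already recorded, so what remains is conditions (b) and (c) of Definition \ref{definizione preesatta} and then Conditions (M), (S), (N). For (b): if $T\in\pd$ then every regular element of $T$ satisfies $x=\lnot\lnot x=\lnot(\lnot x)\in\{\lnot 0,\lnot 1\}=\{1,0\}$, so $\F(T)=\dn T\subseteq\{0,1\}$ lies in $\zeros$; since $\eta_F=id_F$ for $F\in\boole$, naturality of $\eta$ factors any $f\colon T\to F$ as $f=\i\F(f)\,\eta_T$ through the zero object $\dn T$, whence $f\in\zideal$.

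For (c) I would show that $\T(H)\xrightarrow{t_H}H\xrightarrow{\eta_H}\dn H$ is short $\zeros$-exact. That $t_H=\Zker(\eta_H)$ is immediate from [\cite{Cappelletti}, Proposition 4.2]: the $\zeros$-kernel of $\eta_H$ is the pullback of $\Z(\dn H)=\{0,1\}\hookrightarrow\dn H$ along $\lnot\lnot_H$, which on underlying sets is $\{h:\lnot\lnot h\in\{0,1\}\}$, and since $\lnot\lnot\lnot=\lnot$ this equals $\{h:\lnot h\in\{0,1\}\}=\T(H)$ with its inclusion. The less routine half is $\eta_H=\Zcoker(t_H)$: clearly $\eta_H t_H\in\zideal$; and given $g\colon H\to C$ with $g t_H\in\zideal$ (so $g(\T(H))\subseteq\{0,1\}$ unless $C=\terminal$), for any $a,b$ with $\lnot\lnot a=\lnot\lnot b$ the elements $a\Rightarrow b$ and $b\Rightarrow a$ are dense (using $\lnot(x\Rightarrow y)=\lnot\lnot x\land\lnot y$), hence lie in $\T(H)$, so $g$ sends them into $\{0,1\}$; neither image can be $0$ (applying $g$ to the identity $\lnot(a\Rightarrow b)=0$ would force $0=1$ in $C$), so both are $1$ and $g(a)=g(b)$; as $\eta_H$ is a surjective homomorphism, hence a regular epi, $g$ factors uniquely through $\eta_H$. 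This establishes that $(\pd,\boole)$ is a torsion theory.

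Conditions (M) and (S) are short. Since $\heyting$ is regular, its terminal object is strict, and $\zeros=\{\initial,\terminal\}$ is the class of quotients of the initial object, the general remark at the start of Section \ref{examples} gives (M) — hence also (M') — for free. For (S): for $T\in\pd$ the codomain of $\eta_T$ is $\F(T)=\dn T$, which is $\initial$ if $T\neq\terminal$ and $\terminal$ otherwise; as $\heyting$ has no morphism $\terminal\to\initial$, in the pullback square defining (S) the map $Z'\to Z$ is forced to be $id_{\initial}$ (resp.\ an arrow with codomain $\terminal$), so the pullback is trivial: one gets $T'\cong T\in\pd$ and $\chi=\eta_T=\eta_{T'}$ (resp.\ $T'=Z'\in\zeros$ and $\chi=id_{Z'}=\eta_{Z'}$).

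The main work is Condition (N), and it is also where a slip is easiest. Given $f\colon A\to B$, put $K=K[f]$. If $B=\terminal$ then $K=A$ and $k\,t_K=t_A=\Zker(\eta_A)$; if $A=\terminal$ everything is trivial; so assume $A,B\neq\terminal$. Then $K$ is the subalgebra $\{a\in A:f(a)\in\{0,1\}\}$ of $A$, and since negation in $K$ is inherited from $A$ we get $\T(K)=\{a\in A:f(a)\in\{0,1\},\ \lnot a\in\{0,1\}\}$. Here $\lnot a\in\{0,1\}$ forces $a=0$ or $a$ dense, and $f$ sends a dense element of $A$ to a dense element of $B$ (since $\lnot f(a)=f(\lnot a)=0$), which in $\{0_B,1_B\}$ must be $1_B$; hence $\T(K)=\{0\}\cup F$, where $F:=\{a\in A:\lnot a=0\}\cap f^{-1}(1_B)$ is an intersection of two filters of $A$, hence a proper filter. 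Let $q\colon A\to A/F$ be the quotient; computing congruence classes gives $q^{-1}(1)=F$ and $q^{-1}(0)=\{0\}$, so $\Zker(q)=q^{-1}(\Z(A/F))=q^{-1}(\{0,1\})=F\cup\{0\}=\T(K)$, with $\zeros$-kernel morphism the inclusion $\T(K)\hookrightarrow A$, i.e.\ precisely $k\,t_K$. Thus $k\,t_K$ is a $\zeros$-kernel and (N) holds. The delicate points are the explicit description of $\T(K[f])$ and the identification of the filter $F$; everything else is routine manipulation with the pullback description of $\zeros$-kernels.
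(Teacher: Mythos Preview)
Your proof is correct and follows essentially the same route as the paper. The verification of (b), (c), (M), and (S) is virtually identical to the paper's (the paper also reduces the $\zeros$-cokernel step to showing that $g$ identifies any pair $a,b$ with $\lnot\lnot a=\lnot\lnot b$ via the dense elements $a\Rightarrow b$, $b\Rightarrow a$, and handles (M) and (S) by the general remarks on two-element $\zeros$ with strict terminal object together with $\F(H)=\terminal\Leftrightarrow H=\terminal$).

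For Condition (N) there is a small cosmetic difference worth noting. You describe $\T(K[f])$ as $\{0\}\cup F$ for the filter $F=\{a:\lnot a=0\}\cap f^{-1}(1)$ and take the quotient by $F$; the paper instead takes the quotient by the congruence $\Eq(\eta_H)\cap\Eq(f)$. These are the same quotient: the congruence generated by your filter $F$ identifies $a,b$ exactly when $(a\Rightarrow b)\wedge(b\Rightarrow a)\in F$, and using $\lnot(x\Rightarrow y)=\lnot\lnot x\wedge\lnot y$ this unpacks to $\lnot\lnot a=\lnot\lnot b$ and $f(a)=f(b)$. Your filter formulation is slightly more concrete and makes the computations $q^{-1}(1)=F$, $q^{-1}(0)=\{0\}$ immediate; the paper's congruence formulation is more intrinsic. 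Either way the argument is sound.
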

		
		\begin{proof}
			We start by showing that, for any Heyting algebra $H$, $\F(H)=\terminal$ holds if and only if $H=\terminal$. It is clear that $H=\terminal$ implies $\F(H)=\terminal$. Conversely, if $\F(H)=\terminal$, this implies that $\lnot\lnot 1=\lnot\lnot 0$ in $H$, and as a result, $0=1$. Hence, once we prove that $(\pd, \boole)$ constitutes a torsion theory, we immediately conclude that it satisfies Conditions (M), since $\zeros$ is a terminal class, and (S), using the same argument as at the beginning of the proof of Proposition \ref{ideali in MV}. Let us consider a morphism of Heyting algebras $f \colon T \rightarrow F$, where $T$ is an object of $\pd$ and $F$ is an object of $\boole$. Let $x \in T$ be a fixed element. If $\lnot x = 0$, then $0 = f(\lnot x) = \lnot f(x)$. Since $F$ is a Boolean algebra, we deduce that $f(x) = 1$. Similarly, if $\lnot x = 1$, we get $f(x) = 0$. Thus, $f$ factors through an object of $\zeros$. We now fix a Heyting algebra $H \neq \terminal$ and show that the inner commutative square below is both a pullback and a pushout:
			\[\begin{tikzcd}
				K \\
				& {\T(H)} & \initial \\
				& H & {\F(H)} \\
				&&& M,
				\arrow["\chi", from=2-2, to=2-3]
				\arrow["\iota", from=2-3, to=3-3]
				\arrow["{\eta_H}"', from=3-2, to=3-3]
				\arrow["{t_H}"', from=2-2, to=3-2]
				\arrow["h"', curve={height=12pt}, from=1-1, to=3-2]
				\arrow["l", curve={height=-12pt}, from=1-1, to=2-3]
				\arrow["g", curve={height=-12pt}, from=2-3, to=4-4]
				\arrow["f"', curve={height=12pt}, from=3-2, to=4-4]
			\end{tikzcd}\]
			where $t_H$ is the inclusion, $\eta_H$ denotes the double negation map, and $\chi$ is defined as follows:
			$$\chi(x)=
			\begin{cases}
				0 & \text{if } \lnot x = 1 \\
				1 & \text{if } \lnot x = 0.
			\end{cases}$$
			Simple calculations allow us to verify that $\chi$ is a morphism of Heyting algebras. Let us consider a pair of morphisms of Heyting algebras, $h \colon K \rightarrow H$ and $l \colon K \rightarrow \initial$, satisfying the condition $\eta_H h=\iota l$. By observing that $\lnot \lnot h(k)= \iota l (k)$, for every $k \in K$, and recalling that in any Heyting algebra $\lnot \lnot \lnot x= \lnot x$, we can infer that $\lnot h(k)$ can only be equal to $0$ or $1$. Thus, $h$ factors through $\T(H)$, and the uniqueness of this factorization is ensured since $t_H$ is a monomorphism. Therefore, the given square is a pullback. We now fix a pair of morphisms of Heyting algebras, $f \colon H \rightarrow M$ and $g \colon \initial \rightarrow M$, such that $ft_H = g\chi$. We observe that the kernel pair of $\eta_H$ is given by $\Eq(\eta_H)=\{(x,y)\in H \, | \, \lnot\lnot x = \lnot\lnot y\}$. Moreover, since $\lnot\lnot (x \Rightarrow y) = \lnot\lnot x \Rightarrow \lnot\lnot y$, we deduce that, if $(x,y) \in \Eq(\eta_H)$, then $\lnot\lnot (x \Rightarrow y)=1$ and $\lnot\lnot (y \Rightarrow x)=1$. Hence $\lnot(x \Rightarrow y)=0$ and $\lnot(y \Rightarrow x)=0$, so we see that $x \Rightarrow y$ and $y \Rightarrow x$ are elements of $\T(H)$. Clearly, we have $\chi(x \Rightarrow y)=1$ and $\chi(y \Rightarrow x)=1$. By commutativity, we deduce that $f(x \Rightarrow y)=1$ (hence $f(x) \leq f(y)$) and $f(y \Rightarrow x)=1$ (hence $f(y) \leq f(x)$).
			Finally, since $\eta_H$ is surjective, and therefore it is the coequalizer of its kernel pair, there exists a unique morphism of Heyting algebras $\overline{f} \colon \F(H) \rightarrow M$, induced by the universal property of the coequalizer, such that $\overline{f}\eta_H=f$. It is also trivially true that $\overline{f}\chi=g$. Therefore, the square is a pushout.
			So, given a Heyting algebra $H$, the associated exact sequence, with a torsion object on the left and a torsion-free object on the right, is the following:
			\[\begin{tikzcd}
				{\T(H)} & H & {\F(H).}
				\arrow["{t_H}", from=1-1, to=1-2]
				\arrow["{\eta_H}", from=1-2, to=1-3]
			\end{tikzcd}\]
			To conclude, we show that this torsion theory satisfies Condition (N). Let us consider the diagram
			\[\begin{tikzcd}
				{\T(K[f])} & {K[f]} & H & L,
				\arrow["t", from=1-1, to=1-2]
				\arrow["k", from=1-2, to=1-3]
				\arrow["f", from=1-3, to=1-4]
			\end{tikzcd}\]
			where $k = \Zker(f)$ and $t=t_{K[f]}$. If $L = \terminal$, the statement is trivial; hence, let us assume $L \neq \terminal$. In order to prove that Condition (N) holds, we show that the following square is a pullback:
			\[\begin{tikzcd}
				Y \\
				& {\T(K[f])} & \initial \\
				& H & {H/(\Eq(\eta_H) \cap \Eq(f)),}
				\arrow["q"', two heads, from=3-2, to=3-3]
				\arrow["\chi", from=2-2, to=2-3]
				\arrow["\iota", from=2-3, to=3-3]
				\arrow["{k t}"', from=2-2, to=3-2]
				\arrow["a"', curve={height=12pt}, from=1-1, to=3-2]
				\arrow["b", curve={height=-12pt}, from=1-1, to=2-3]
			\end{tikzcd}\]
			where $\Eq(\eta_H) \cap \Eq(f)= \{ (x,y) \in H \times H \, | \, \lnot \lnot x= \lnot \lnot y \text{ and } f(x)=f(y) \}$ and $q$ is the quotient projection. Let us consider a pair of arrows $a \colon Y \rightarrow H$ and $b \colon Y \rightarrow \initial$ such that $qa=\iota b$. It can be observed that, for every $y \in Y$, if $b(y)=0$, then, by commutativity, $qa(y)=0$, which implies that $\lnot \lnot a(y)=0$ and $fa(y)=0$. Recalling that $K[f]=\{ x \in H \,|\, f(x)=0 \text{ or } f(x)=1 \}$, we deduce that $a(y) \in \T(K[f])$. By a similar reasoning, if $b(y)=1$, then $a(y) \in \T(K[f])$ as well. Thus, we can conclude that $a$ factors through $\T(K[f])$, and the uniqueness of this factorization is guaranteed by the fact that $k t$ is a composition of monomorphisms, and therefore a monomorphism itself.
		\end{proof}
		
		\begin{prop}
			The reflector $\F$ is a localization (i.e.\ it preserves all finite limits). Hence, in particular, $\F$ is protoadditive.
		\end{prop}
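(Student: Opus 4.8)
The plan is to establish left-exactness by checking that $\F$ preserves the terminal object, binary products, and equalizers; a functor between finitely complete categories preserving these preserves every finite limit, which is all that is needed. Protoadditivity will then be immediate, since a left-exact functor preserves \emph{every} pullback, and in particular the pullbacks of split epimorphisms along arrows inverted by $\Z$.

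For the terminal object I would simply observe that $\terminal = \graffe{0 = 1}$ is the one-element Heyting algebra, its only element is regular, so $\dn{\terminal} = \terminal = \F(\terminal)$. For binary products the crucial point is that negation in $H \times L$ is computed coordinatewise: $\lnot(x_1, x_2) = (x_1, x_2) \Rightarrow (0,0) = (\lnot x_1, \lnot x_2)$, hence $\lnot\lnot(x_1, x_2) = (\lnot\lnot x_1, \lnot\lnot x_2)$, so an element of $H \times L$ is regular precisely when both of its components are. Therefore $\dn{(H \times L)} = \dn{H} \times \dn{L}$, with the identification carrying the two projections to $\F(\pi_H)$ and $\F(\pi_L)$.

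The one step that needs a little attention is equalizers. Given $f, g \colon H \to L$ in $\heyting$, their equalizer is the sub-Heyting algebra $E = \graffe{x \in H \mid f(x) = g(x)}$, equalizers in $\heyting$ being computed as in $\set$. Since $E$ is closed under $\Rightarrow$ and contains $0$, the negation of $E$ is the restriction of that of $H$, so the regular elements of $E$ are exactly $E \cap \dn{H}$. On the other hand, the equalizer of $\F(f), \F(g) \colon \dn{H} \to \dn{L}$ in $\boole$ has underlying set $\graffe{x \in \dn{H} \mid f(x) = g(x)} = E \cap \dn{H}$, with inclusion $\F(e)$. Hence $\F$ preserves this equalizer. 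I do not expect any genuine obstacle; the only delicate point is precisely the remark that regularity computed inside the sub-Heyting algebra $E$ agrees with regularity computed in $H$, which is justified by $E$ inheriting $\Rightarrow$ and $0$ from $H$. Once left-exactness is in hand, the ``in particular'' clause of the statement requires no further argument.
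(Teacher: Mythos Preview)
Your proof is correct and rests on the same idea as the paper's: since limits in $\heyting$ are computed as in $\set$ and negation is inherited by sub-Heyting algebras and computed coordinatewise in products, double negation---and hence regularity---is determined elementwise in any finite limit. The only difference is organizational: the paper checks directly that $\F$ preserves the terminal object and arbitrary pullbacks, whereas you decompose into terminal object, binary products, and equalizers; both routes yield preservation of all finite limits.
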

		
		\begin{proof}
			Clearly, $\F$ preserves the terminal object $\terminal$. So, it suffices to prove that $\F$ preserves every pullback. Consider the following pullback in $\heyting$:
			\[\begin{tikzcd}
				{H \times_LK} & K \\
				H & L.
				\arrow["f"', from=2-1, to=2-2]
				\arrow["g", from=1-2, to=2-2]
				\arrow["{\pi_H}"', from=1-1, to=2-1]
				\arrow["{\pi_K}", from=1-1, to=1-2]
				\arrow["\lrcorner"{anchor=center, pos=0}, draw=none, from=1-1, to=2-2]
			\end{tikzcd}\]
			Our goal is to show that $\dn{(H \times_L K)}=\dn{H} \times_{\dn{L}} \dn{K}$.
			Let $(h,k) \in \dn{(H \times_L K)}$. Then we have $(h,k)=\lnot \lnot (h,k)$ and $f(h)=g(k)$. This implies that $\lnot \lnot h= h$ (i.e.\ $h \in \dn{H}$) and $\lnot \lnot k \in K$ (i.e.\ $k \in \dn{K}$). Therefore, $(h,k) \in \dn{H} \times_{\dn{L}} \dn{K}$.
			Conversely, let $(h,k) \in \dn{H} \times_{\dn{L}} \dn{K}$. Then we have $\lnot \lnot h= h$, $\lnot \lnot k=k$, and $f(h)=g(k)$. It follows that $(h,k)=\lnot \lnot (h,k)$ and hence $(h,k) \in \dn{(H \times_L K)}$.
			Therefore $\dn{(H \times_L K)}=\dn{H} \times_{\dn{L}} \dn{K}$, as required.
		\end{proof}
		
		We are now ready to investigate the central extensions determined by the Galois structure induced by the adjunction $\F \dashv i$. As we have shown, an effective descent morphism $f \colon H \rightarrow L$ in $\heyting$ (i.e.\ a surjective map) is a central extension if and only if $K[f]$, which is defined as the set $\{ x \in H \,|\, f(x)=0 \text{ or } f(x)=1 \}$, is a Boolean algebra. In simpler terms, a surjective map is central if and only if its $\zeros$-kernel is an object of $\boole$. Furthermore, since the reflector $\F$ is a localization, it follows that an extension is central if and only if it is trivial. This last fact can also be seen as a consequence of Remark 4.6 in \cite{JKgalois}.
		
		We now turn our attention to the stable factorization system induced by the torsion theory $(\pd, \boole)$. Thanks to the results obtained in the previous sections, we get $\facte = \{ e \in \Arr(\heyting) \,|\, e$ \text{ is a $\zeros$-cokernel and } $K[e] \in \pd \}$ and $\factm = \{ m \in \Arr(\heyting) \,|\, K[m] \in \boole \}$. Given an arrow $f \colon H \rightarrow L$ in $\heyting$, we can construct the following factorization:
		\[\begin{tikzcd}
			H && L \\
			& {\overline{H},}
			\arrow["f", from=1-1, to=1-3]
			\arrow["e"', two heads, from=1-1, to=2-2]
			\arrow["{\overline{f}}"', from=2-2, to=1-3]
		\end{tikzcd}\]
		where $\overline{H} \coloneqq H/(\Eq(f) \cap \Eq(\eta_H))$, $e$ is the quotient projection, and $\overline{f}([x]) \coloneqq f(x)$ for every $[x] \in \overline{H}$. It is not difficult to see that $e$ is the $\zeros$-cokernel of the inclusion of $K[e] = \{ x \in H \,|\, f(x)=0, \lnot x=1 \} \cup \{ x \in H \,|\, f(x)=1, \lnot x =0 \}$, and that $K[e] \in \pd$, being a subalgebra of $\T(H)$. Moreover, we observe that $K[\overline{f}] \in \boole$. To see this, consider an element $[x] \in K[\overline{f}]$. We need to show that $[x]=\lnot \lnot [x]$. Suppose $\overline{f}([x])=0$; to conclude that $[x]=\lnot \lnot [x]$, we need to prove that $f(x)=f(\lnot \lnot x)$ and $\lnot \lnot x = \lnot \lnot \lnot \lnot x$. The second equality always holds, while the first is true because $f(x)=0$. The same happens if $\overline{f}([x])=1$. Thus, the proposed factorization is precisely the $(\facte, \factm)$-factorization.
		
		\subsection{Contraction of vertices in $\sset$}
		We denote by $\Delta$ the category whose objects are the totally ordered sets $[n] \coloneqq \{0,1,2,\dots,n\}$, where the order is induced by the usual one of $\mathbb{N}$, and whose morphisms are the order-preserving maps. We recall that a \emph{simplicial set} is a functor $X \colon \Delta^{op} \rightarrow \set$. The category of simplicial sets, denoted by $\sset$, has the simplicial sets as objects, and the natural transformations between them as morphisms. For every natural number $n$ and every simplicial set $X$, we define $X_n \coloneqq X([n])$.
		
		A simplicial set $X$ can be seen as family of sets $X_n$, for each non-negative integer $n$, with two sets of maps $d_i \colon X_n \rightarrow X_{n-1}$, for $ n>0$, and $s_i \colon X_n \rightarrow X_{n+1}$, for $0 \leq i \leq n$, such that the following conditions are satisfied for each $i, j$:
		
		\begin{equation*}
			d_id_j = d_{j-1}d_i, \hspace{1cm} i<j
		\end{equation*}
		
		\begin{equation*}
			s_is_j =s_{j} s_{i-1}, \hspace{1cm} i>j
		\end{equation*}
		
		\begin{equation*}
			d_is_j =
			\begin{cases}
				s_{j-1}d_i, & i<j \\
				id, & i=j, j+1 \\
				s_j d_{i-1}, & i>j+1.
			\end{cases}
		\end{equation*}

		This is the standard way to write the data of a simplicial set. The elements of $X_0$ are called the \emph{vertices}. Given that each $s_i$ is an injective map, to simplify the notation, we will assume that these maps are inclusion maps.
		
		Let $S$ denote the terminal object of $\sset$, and $V$ denote the initial object. It is clear that $S=\Hom_{\Delta}(\mathrm{-},[0])$, which implies that $S_n=\{* \}$, for every natural number $n$. Furthermore, $V_n=\emptyset$ for every natural number $n$. We observe that $\sset$ is a two-valued elementary topos. Indeed, if we consider a simplicial set $X \subseteq S$, then either $X_n=\{*\}$ for all $n\in\mathbb{N}$, or there exists a natural number $n$ such that $X_n=\emptyset$. However, in the latter case, it follows that $X_n=\emptyset$ for all $n\in\mathbb{N}$, and hence $X=V$. Therefore, we have $\Sub(S)=\{S,V\}$.
		
		Moreover, the Yoneda Lemma implies $$\sset(S,X)=\Hom_{\sset}(\Hom_{\Delta}(\mathrm{-},[0]), X)=X_0$$ for every simplicial set $X$. Therefore, the simplicial sets for which there exists a unique arrow $S\rightarrow X$ are precisely those such that $|X_0|=1$. We now define two full subcategories of $\sset$ whose objects are:
		$$\torf \coloneqq \{X\in\sset \,|\, X_n=X_0, X(f)=id_{X_0} \text{ for every } n \in \mathbb{N}, f \in \Arr(\Delta) \}$$
		and
		$$\tort \coloneqq \{X\in\sset \,|\, |X_0| \leq 1 \}.$$
		Given a simplicial set $X$, we define $\F(X)$ as the subobject of $X$ such that $\F(X)_n = X_0$ for every $n \in \mathbb{N}$. So, we have $\F(X) \in \torf$.
		
		From this point on, we will consider as class of zero objects $\zeros \coloneqq \torf \cap \tort = \{ S, V \}$.
		
		\begin{prop}
			$\torsione$ is a torsion theory in $\sset^{op}$.
		\end{prop}
		
		\begin{proof}
			We work, dually, in $\sset$. Let us consider an arrow $f \colon P \rightarrow T$ in $\sset$, where $P$ is an object of $\torf$ and $T$ is an object of $\tort$. If $T=V$, then clearly $f$ factors through an object of $\zeros$. If $T \neq V$, then $f_0(P_0)=T_0=\{*\}$. Moreover, by naturality, the following diagram commutes:
			\[\begin{tikzcd}
				{P_0} & {T_0=\{*\}} \\
				{P_0} & {T_1.}
				\arrow[hook, from=1-2, to=2-2]
				\arrow["{f_0}", from=1-1, to=1-2]
				\arrow[equal, from=1-1, to=2-1]
				\arrow["{f_1}"', from=2-1, to=2-2]
			\end{tikzcd}\]
			Thus, $f_1(P_0)=\{*\}$. By iterating this observation, we deduce that $f_n(P_0)=\{*\}$ for every natural number $n$. Therefore, $f$ factors through $S$. Furthermore, we define the simplicial set morphism $t_X \colon \F(X) \rightarrow X$, where $(t_X)_n$ is the inclusion of $X_0$ in $X_n$. Let us assume that $X \neq V$ and construct $\T(X)$ using the following pushout diagram:
			\[\begin{tikzcd}
				{\F(X)} & S \\
				X & {\T(X).}
				\arrow[from=1-1, to=1-2]
				\arrow["{t_X}"', from=1-1, to=2-1]
				\arrow["{\eta_X}"', from=2-1, to=2-2]
				\arrow["r", from=1-2, to=2-2]
				\arrow["\lrcorner"{anchor=center, pos=0, rotate=180}, draw=none, from=2-2, to=1-1]
			\end{tikzcd}\]
			Recalling that limits and colimits in $\sset$ are computed, level-wise, as in $\set$, we deduce that $\T(X)_0=X_0/X_0=\{*\}$ and, therefore, $\T(X) \in \tort$. We now show that the above square is also a pullback. To this end, let $\alpha \colon Y \rightarrow X$ be a morphism of simplicial sets such that $\eta_X \alpha= r \tbang{Y}$:
			\[\begin{tikzcd}
				Y \\
				& {\F(X)} & S \\
				& X & {\T(X).}
				\arrow[from=2-2, to=2-3]
				\arrow["{t_X}"', from=2-2, to=3-2]
				\arrow["{\eta_X}"', from=3-2, to=3-3]
				\arrow["r", from=2-3, to=3-3]
				\arrow["\alpha"', curve={height=12pt}, from=1-1, to=3-2]
				\arrow["{\tbang{Y}}", curve={height=-12pt}, from=1-1, to=2-3]
			\end{tikzcd}\]
			Since, for every $n\in\mathbb{N}$ and for every $y\in Y_n$, we have $(\eta_X)_n(\alpha_n(y))=r_n(*)=*$, it follows that $\alpha_n(y)\in X_0\subseteq X_n$. Hence, $\alpha$ restricts to an arrow to $\F(X)$ and the square is indeed a pullback. Therefore, we have shown that $t_X$ is the $\zeros$-kernel of $\eta_X$, and that $\eta_X$ is the $\zeros$-cokernel of $t_X$. Finally, if $X=V$, we clearly have $\F(X)=V$, and we define $\T(X)\coloneqq V$.
		\end{proof}
		
		We observe that the torsion theory $\torsione$ in $\sset^{op}$ satisfies Condition (S). Indeed, if $\F(X)=V$ (recall that $V$ is the terminal object in $\sset^{op}$), then $X_0=\emptyset$, and thus $X=V$. So, as in the previous examples, Condition (S) follows. Moreover, $\F$ preserves pullbacks in $\sset^{op}$. To see this, we work dually and we show that $\F$ preserves pushouts in $\sset$. Since pushouts in $\sset$ are computed level-wise as in $\set$, and $\F(X)=X_0$ for every simplicial set $X$, the claim follows easily. In particular, $\F$ is protoadditive, considered as functor on $\sset^{op}$. Observing that $\F$ obviously preserves the terminal object of $\sset^{op}$, we get that $\F$ preserves all finite limits, so it is a localization. Since, moreover, the class of zero objects is terminal, we can conclude that our torsion theory satisfies Condition (M). Finally, we prove that $\torsione$ satisfies Condition (N). Consider a morphism of simplicial sets $f \colon X \to Y$, where $X \neq V$, and define $Q[f]$ through the following pushout:
		\[\begin{tikzcd}
			X & \terminal \\
			Y & {Q[f].}
			\arrow["f"', from=1-1, to=2-1]
			\arrow[from=1-1, to=1-2]
			\arrow["q"', from=2-1, to=2-2]
			\arrow[from=1-2, to=2-2]
			\arrow["\lrcorner"{anchor=center, pos=0, rotate=180}, draw=none, from=2-2, to=1-1]
		\end{tikzcd}\]
		We need to prove that $\eta_{Q[f]}q$ is a $\zeros$-cokernel:
		\[\begin{tikzcd}
			X & Y & {Q[f]} & {\T(Q[f]).}
			\arrow["f", from=1-1, to=1-2]
			\arrow["q", from=1-2, to=1-3]
			\arrow["{\eta_{Q[f]}}", from=1-3, to=1-4]
		\end{tikzcd}\]
		We observe that $\T(Q[f])=(Y_n/f_n(X_n))/(Y_0/f_0(X_0))$, and applying a similar argument to the one seen for the case of $\mset$, we deduce that $Y_n/(Y_0 \cup f_n(X_n)) \cong (Y_n/f_n(X_n))/(Y_0/f_0(X_0))$. Therefore, we conclude that $\eta_{Q[f]}q$ is the $\zeros$-cokernel of the inclusion $\F(Y) \cup f(X) \hookrightarrow Y$ (where $(\F(Y) \cup f(X))_n \coloneqq Y_0 \cup f_n(X_n)$ for every natural number $n$). Moreover, if $X=V$, then $Q[f]=Y$, $q=id_Y$, and $\eta{Q[f]}=\eta_Y$. Thus, in this case $\eta_{Q[f]}q=\eta_Y$, which is a $\zeros$-cokernel by construction.\\
		
		We can now describe the stable factorization system induced by $\torsione$ on $\sset^{op}$. To simplify the argument, we will work in $\sset$. We have
		\[ \facte = \{ e \colon X \rightarrow Y \in \Arr(\sset) \, | \, e \text{ is a $\zeros$-kernel, and } Y/e(X) \in \tort \}, \] and
		\[ \factm = \{ m \colon X \rightarrow Y \in \Arr(\sset) \, | \, Y/m(X) \in \torf \}, \] where $(Y/e(X))_n \coloneqq Y_n/e_n(X_n)$ and $(Y/m(X))_n \coloneqq Y_n/m_n(X_n)$,
		for every natural number $n$. Therefore, as previously observed in the case of $\mset$, the $(\factm, \facte)$-factorization of an arrow $f \colon X \rightarrow Y$ is given by:
		\[\begin{tikzcd}
			X && Y \\
			& {f(X) \cup \F(Y),}
			\arrow["f", from=1-1, to=1-3]
			\arrow[tail, "e"', from=2-2, to=1-3]
			\arrow["m"', from=1-1, to=2-2]
		\end{tikzcd}\]
		where the morphism $m\in \factm$ is obtained by restricting the codomain of $f$, while $e\in \facte$ is, level-wise, the inclusion map.
		
		Finally, we can characterize the central extensions for $\Gamma_{\torf}$, which is the Galois structure induced by $\torsione$ in $\sset^{op}$. The functor $\F$ being a localization, the central and the trivial extensions coincide. Theorem \ref{centrali e torsione} implies that a regular epimorphism in $\sset^{op}$ is a central extension if and only if $K[f]$ is an object of $\torf$. In the dual perspective, we can say that a regular monomorphism $f$ in $\sset$, viewed as an arrow in $\sset^{op}$, is a central extension for $\Gamma_{\torf}$ if and only if $Q[f]$, which is the codomain of the $\zeros$-cokernel of $f$ in $\sset$, belongs to $\torf$. Moreover, since every monomorphism in $\sset$ is regular, we can conclude that a regular epimorphism $f$ in $\sset^{op}$ is a central extension if and only if $f \colon X \rightarrow Y$, considered as an arrow in $\sset$, is a monomorphism and $Y/f(X) \in \torf$, where $Y/f(X)$ is the simplicial set defined by $(Y/f(X))_n \coloneqq Y_n/f_n(X_n)$, for every natural number $n$. Equivalently, if $Y_n=Y_0 \cup f_n(X_n)$ for every natural number $n$.
		
		\subsection{$m$-divisibility in $\coslice$}
		The next example differs from the previous ones because the class of zero objects has more than two elements. In this example, we focus on the study of a torsion theory in the category $\coslice$. Recall that the objects of this category are arrows in the category $\Ab$ of abelian groups of the form $a \colon \Zm \rightarrow A$, where $\Zm$ is the quotient group $\mathbb{Z}/m\mathbb{Z}$ for $m>0$, while a morphism $f \colon a \rightarrow b$ (where $a \colon \Zm \rightarrow A$ and $b \colon \Zm \rightarrow B$) is an arrow $f \colon A \rightarrow B$ in $\Ab$ such that $fa = b$. It is not difficult to observe that the category $\coslice$ can be seen as the category whose objects are pairs $(A,a)$, where $A$ is an abelian group and $a \in A$ is an element such that $ma = 0$, and an arrow $f \colon (A,a) \rightarrow (B,b)$ is a group homomorphism $f \colon A \rightarrow B$ such that $f(a)=b$. To verify this, it suffices to note that the arrow $a \colon \Zm \rightarrow A$ is uniquely determined by the image of $1$ (since $\Zm$ is a cyclic group generated by $1$) and in $\Zm$ we have $m1 = m = 0$. In $\coslice$ the initial object is $(\Zm, 1)$ and the terminal object is $(\graffe{0}, 0)$. Applying the following proposition we get that $\coslice$ is a regular protomodular category.
		
		\begin{prop}[\cite{Bbbook}, Example 3.1.14, Example A.5.13]
			Let $\mathcal{C}$ be a Barr-exact protomodular category. Then, for every $X \in \mathcal{C}$, the coslice category $X/\mathcal{C}$ is Barr-exact and protomodular.
		\end{prop}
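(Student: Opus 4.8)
The plan is to push everything through the forgetful functor $U \colon X/\mathcal{C} \to \mathcal{C}$ sending $(A, a \colon X \to A)$ to $A$. The two properties of $U$ that do all the work are: first, $U$ creates all limits and creates connected colimits — in particular coequalizers — since for a diagram in $X/\mathcal{C}$ the structure maps assemble into a (co)cone over the underlying diagram in $\mathcal{C}$, so its (co)limit inherits a canonical structure map; second, $U$ is faithful, reflects isomorphisms, and preserves and reflects monomorphisms, the statement about monomorphisms being a consequence of the one about limits, and the one about isomorphisms following because an inverse in $\mathcal{C}$ of $U(m)$ automatically commutes with the structure maps, hence is a morphism of $X/\mathcal{C}$.

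First I would record that $X/\mathcal{C}$ has finite limits (created by $U$ from those of $\mathcal{C}$) and that $U$ preserves and reflects regular epimorphisms: a morphism $f$ of $X/\mathcal{C}$ is a regular epimorphism iff it is the coequalizer of its kernel pair, the kernel pair is a limit hence lies above that of $U(f)$, and the coequalizer of the kernel pair of $f$ lies above that of $U(f)$ because $U$ creates coequalizers; thus $f$ is a regular epimorphism in $X/\mathcal{C}$ exactly when $U(f)$ is one in $\mathcal{C}$, and in particular coequalizers of kernel pairs exist in $X/\mathcal{C}$. Regularity then follows: a pullback square in $X/\mathcal{C}$ with a regular-epimorphism leg is sent by $U$ to a pullback square in $\mathcal{C}$ with a regular-epimorphism leg, so the opposite leg is a regular epimorphism in $\mathcal{C}$, hence in $X/\mathcal{C}$. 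For Barr-exactness, an equivalence relation $R \rightrightarrows (A,a)$ in $X/\mathcal{C}$ is sent by $U$ to an equivalence relation on $A$ (monomorphisms and limits being preserved); being effective in $\mathcal{C}$ it is the kernel pair of some $q \colon A \to Q$; equipping $Q$ with the structure map $q \circ a$ makes $q$ a morphism of $X/\mathcal{C}$, and since $U$ creates kernel pairs, $R$ is its kernel pair there.

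For protomodularity, pullbacks of split epimorphisms along arbitrary arrows exist in $X/\mathcal{C}$ because all pullbacks do. For the joint-extremal-epimorphism clause, given the defining square in $X/\mathcal{C}$ with the indicated split epimorphisms and pullback, I would apply $U$ to obtain the corresponding configuration in $\mathcal{C}$; if $(g,s)$ both factored through a monomorphism $m$ of $X/\mathcal{C}$, then $(U(g),U(s))$ would factor through the monomorphism $U(m)$, which would then be an isomorphism by protomodularity of $\mathcal{C}$, whence $m$ is an isomorphism because $U$ reflects them, so $(g,s)$ is jointly extremal epimorphic in $X/\mathcal{C}$. The only point requiring genuine care throughout is verifying that the relevant (co)limits are \emph{created}, and not merely preserved, by $U$, together with the bookkeeping of structure maps when a quotient or a pullback is lifted from $\mathcal{C}$ back to $X/\mathcal{C}$; all of this is routine, which is why the statement can simply be quoted from \cite{Bbbook}.
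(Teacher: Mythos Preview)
The paper does not supply its own proof of this proposition; it simply quotes the result from \cite{Bbbook} (Examples 3.1.14 and A.5.13) and uses it as a black box. Your proposal is therefore not competing with any argument in the paper, and what you have written is a correct and standard proof: the forgetful functor $U \colon X/\mathcal{C} \to \mathcal{C}$ creates limits and connected colimits, is conservative, and preserves and reflects monomorphisms and regular epimorphisms, so regularity, Barr-exactness, and the joint-extremal-epimorphism form of protomodularity all descend from $\mathcal{C}$ exactly as you describe.
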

		
		Moreover, recall that in the coslice category $X/\mathcal{C}$, quotients and limits are computed as in the category $\mathcal{C}$. Therefore, in $\coslice$, the quotients of the initial object $(\Zm, 1)$ are objects of the form $(\Zh, 1)$, where $h$ divides $m$ (for simplicity of notation, we identify $(\mathbb{Z}_1, 1)$ with $(\graffe{0}, 0)$). As proved in \cite{Cappelletti}, in the regular context the class given by the quotients of the initial object is a terminal class of zero objects (and we denote this class with $\zeros$). Moreover, observe that in $\coslice$, with respect to this class of zero objects, one has $\Z(A,a)=(\spigolose{a},a)$.
		
		\begin{prop}\label{pullback in coslice}
			Let $f \colon (A,a) \rightarrow (B,b)$ be an arrow in $\coslice$. Then, $\Zker(f)=(\spigolose{\invf{0}, a}, a) \hookrightarrow (A,a)$.
		\end{prop}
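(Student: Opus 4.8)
The plan is to reduce the statement to the pullback description of $\zeros$-kernels and then carry out the computation concretely inside $\Ab$.

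First I would recall, as noted in the paragraph preceding the statement, that for an object $(B,b)$ of $\coslice$ one has $\Z(B,b)=(\spigolose{b},b)$, with $\varepsilon_{(B,b)}$ the inclusion $\spigolose{b}\hookrightarrow B$ carrying the distinguished element $b$. By [\cite{Cappelletti}, Proposition 4.2], $\Zker(f)$ is the pullback of $\varepsilon_{(B,b)}$ along $f$. Since limits in a coslice category $X/\mathcal{C}$ are computed as in $\mathcal{C}$, this pullback is computed in $\Ab$: its underlying abelian group is $\invf{\spigolose{b}}=\{x\in A\mid f(x)\in\spigolose{b}\}$, the map $k$ is the inclusion into $A$, and the distinguished element is $a$, which lies in $\invf{\spigolose{b}}$ because $f(a)=b$. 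Thus $k\colon(\invf{\spigolose{b}},a)\hookrightarrow(A,a)$ is a morphism of $\coslice$ and the pullback square commutes.

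It then remains to identify $\invf{\spigolose{b}}$ with $\spigolose{\invf{0}, a}$ as subgroups of $A$. Here I would use that $b=f(a)$, so $\spigolose{b}=f(\spigolose{a})$, together with the elementary fact that in an abelian group $f^{-1}(f(S))=S+\ker f$ for every subgroup $S$; applied to $S=\spigolose{a}$ this gives $\invf{\spigolose{b}}=\spigolose{a}+\ker f=\spigolose{\invf{0}, a}$. This is the only non-formal step, and it is routine. (Alternatively, one could verify the two defining properties of a $\zeros$-kernel directly: $f$ maps $\spigolose{\invf{0}, a}$ into $\spigolose{b}$, so $fk$ factors through the zero object $\Z(B,b)$ and hence $fk\in\zideal$; and if $e\colon(E,e_0)\to(A,a)$ satisfies $fe\in\zideal$, then by [\cite{Cappelletti}, Remark 2.3] $fe$ factors through $\varepsilon_{(B,b)}$, which forces $e(E)\subseteq\invf{\spigolose{b}}=\spigolose{\invf{0}, a}$, the resulting factorization through $k$ being unique since $k$ is a monomorphism.)

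I do not anticipate any genuine obstacle here; the only point deserving a little care is the bookkeeping of distinguished elements when transporting the pullback computed in $\Ab$ back to $\coslice$.
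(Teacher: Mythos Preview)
Your proposal is correct and follows essentially the same approach as the paper: both compute $\Zker(f)$ as the pullback of $\varepsilon_{(B,b)}\colon(\spigolose{b},b)\hookrightarrow(B,b)$ along $f$, and both carry out the identification elementwise. The only cosmetic difference is that the paper verifies the universal property of the pullback directly (given $g\colon(C,c)\to(A,a)$ with $fg$ landing in $\spigolose{b}$, one shows $g(x)-na\in\invf{0}$), whereas you first name the pullback as $\invf{\spigolose{b}}$ and then invoke the identity $f^{-1}(f(\spigolose{a}))=\spigolose{a}+\ker f$; these are the same computation packaged slightly differently.
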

		
		\begin{proof}
			Let us consider the commutative square:
			\[\begin{tikzcd}
				{(\spigolose{\invf{0},a},a)} & {(\spigolose{b},b)} \\
				{(A,a)} & {(B,b),}
				\arrow["\chi", from=1-1, to=1-2]
				\arrow[hook, from=1-1, to=2-1]
				\arrow[hook, from=1-2, to=2-2]
				\arrow["f"', from=2-1, to=2-2]
			\end{tikzcd}\]
			where $\chi(k+na)=nb$ (where $k \in \invf{0}$). Let us prove that the square is a pullback. Consider an object $(C,c)$ and a pair of arrows $g \colon (C,c) \rightarrow (A,a)$ and $h \colon (\spigolose{b},b) \rightarrow (B,b)$ such that $g f = h$. For every $x \in C$, we have $f g(x) = h(x) = n b = f(n a)$ for some $n \in \mathbb{N}$. Thus, $g(x) - n a \in \invf{0}$, meaning that the image of $g$ is contained in $\spigolose{\invf{0},a}$. Consequently, we can define a morphism $\overline{g} \colon (C,c) \rightarrow (\spigolose{\invf{0},a},a)$, concluding that the square is a pullback.
		\end{proof}
		
		In order to introduce our torsion theory in $\coslice$, we need to recall the following:
		
		\begin{defi}
			Let $m>0$ be a natural number; an abelian group $A$ is \emph{$m$-divisible} if for every $a \in A$ there exists $b \in A$ such that $a=mb$. An abelian group $A$ is \emph{divisible} if it is $m$-divisible for every $m>0$.
		\end{defi}
		
		\begin{example}
			Let $m$ be a positive integer. The following are examples of $m$-divisible abelian groups:
			\begin{itemize}
				\item The additive group $\mathbb Q$ and its quotient $\mathbb Q /\mathbb Z$ are $m$-divisible for every positive integer number $m$.
				\item The subgroup of the additive group $\mathbb Q$ defined by \[ \Zum = \left\{ x \in \mathbb{Q} \, \middle| \, \exists n \in \mathbb{N}, \, x_1, \dots, x_n \in \mathbb{Z}, \, x = \sum_{i=0}^n \frac{x_i}{m^i} \right\}. \]
				\item The subgroup of the multiplicative group $\mathbb{C}^*$ defined by \[S_m \coloneqq \graffe{z \in \mathbb C \,|\, \exists n \in \mathbb{N} \text{ s.t.\ } z^{m^n}=1},\]
				i.e., the group of the $m^n$th roots of unity as $n$ varies over the natural numbers.
			\end{itemize}
		\end{example}
		
		Let $\Divm$ denote the full subcategory of $\Ab$ whose objects are the $m$-divisible groups, and $\j \colon \Divm \rightarrow \Ab$ the inclusion functor. We show that $\j$ is a left adjoint. Given an abelian group $A$, we define
		\[\Dm(A) \coloneqq \graffe{a \in A \,|\,\exists \suc{a\ped{n}} \text{ s.t. } a\ped{0}=a, \forall n \in \mathbb{N} \,\, a\ped{n}=ma\ped{n+1}} \]
		It is easy to check that $\Dm(A)$ is a subgroup of $A$ and that, given a morphism of abelian groups $f \colon A \rightarrow B$ and an element $a \in \Dm(A)$, $f(a) \in \Dm(B)$. So $\Dm$ extends to a functor $\Dm \colon \Ab \rightarrow \Divm$.  For each $A \in \Ab$, the inclusion $\Dm(A) \hookrightarrow A$ is the $A$-component of the counit of the adjunction $j \dashv \Dm$. Indeed, for any $m$-divisible group $D$ and any morphism $f \colon D \rightarrow A$, $f$ factors through $\Dm(A)$ giving the following commutative triangle:
		
		\[\begin{tikzcd}
			& D \\
			{\Dm(A)} && A.
			\arrow["{\overline{f}}"', from=1-2, to=2-1]
			\arrow["f", from=1-2, to=2-3]
			\arrow[hook, from=2-1, to=2-3]
		\end{tikzcd}\]
		
		The reason for introducing this notion of $m$-divisibility is connected to an important property of our torsion theories. Let us consider an object $(A,a)$ in $\coslice$, and suppose that there exists a morphism $f \colon (A,a) \rightarrow (\Zm, 1)$. This implies that $\ord(a) = m$ (otherwise, there would exist a divisor $k$ of $m$ such that $k1=0$ in $\Zm$, which is absurd). We can then define the morphism $s \colon (\Zm, 1) \rightarrow (A,a)$, uniquely determined by $s(1) \coloneqq a$. Therefore, since $f$ is a split epimorphism, it follows that in $\Ab$, $A \cong \invf{0} \oplus \spigolose{a}$.
		
		We now have all the necessary ingredients to determine which objects in the coslice category make $\Homcos((A,a), (\Zm, 1))$ a singleton. This is important for identifying the candidates that could form the torsion part of a torsion theory (recalling that, in general, given a torsion object and a zero object, there can be at most one morphism from the first to the second). We showed that, if there exists a morphism $f \colon (A,a) \rightarrow (\Zm,1)$, then, up to isomorphism, $A = B \oplus \spigolose{a}$ (where $B=\invf{0}$), and $f$ is the projection onto the second component of the direct sum. Recalling that every morphism $g \colon (A,a) \rightarrow (\Zm, 1)$ in $\coslice$ satisfies $g(a)=1$ and that $A = B \oplus \spigolose{a}$, it follows that $g$ is uniquely determined by its restriction (viewed as a morphism of abelian groups) to $B$ (we are simply using the fact that a direct sum is a coproduct). Therefore, in this case, we have $\Homcos((A,a), (\Zm, 1)) \cong \Homab(B, \Zm)$. Thus, we are interested in characterizing those abelian groups $B$ such that $\Homab(B, \Zm)$ has exactly one element. In other words, we want to characterize those $B$'s such that $\Homab(B, \Zm) = \graffe{0}$.
		
		\begin{prop}
			Given an abelian group $A$, $\Homab(A, \Zm) = \graffe{0}$ if and only if $A$ is $h$-divisible for every $h$ which divides $m$.
		\end{prop}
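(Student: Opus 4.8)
The plan is to reduce the statement to a single divisibility condition and then prove the two implications. First I would record the elementary observation that the hypothesis ``$A$ is $h$-divisible for every $h$ dividing $m$'' is equivalent to ``$A$ is $m$-divisible'': if $A=mA$ and $m=hk$, then $A=mA=h(kA)\subseteq hA\subseteq A$, so $A=hA$; the converse is immediate since $m\mid m$. Hence it suffices to show that $\Homab(A,\Zm)=\graffe{0}$ if and only if $A=mA$.

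For the ``if'' direction the argument is essentially one line: assuming $A=mA$, take any homomorphism $\varphi\colon A\to\Zm$ and any $a\in A$; writing $a=mb$ with $b\in A$ we get $\varphi(a)=m\varphi(b)=0$, because $\Zm$ is annihilated by $m$. Therefore $\varphi=0$.

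For the ``only if'' direction I would argue by contraposition, i.e.\ assume $mA\subsetneq A$ and construct a nonzero morphism $A\to\Zm$. The quotient map $q\colon A\to A/mA$ is a nonzero morphism onto the nonzero abelian group $B\coloneqq A/mA$, which is annihilated by $m$. Choosing $0\neq\bar b\in B$, its order $k$ divides $m$ and satisfies $k>1$, so sending a generator of $\spigolose{\bar b}\cong\mathbb{Z}/k\mathbb{Z}$ to $\frac{1}{k}+\mathbb{Z}$ defines an injection $\spigolose{\bar b}\hookrightarrow\mathbb{Q}/\mathbb{Z}$. Since $\mathbb{Q}/\mathbb{Z}$ is divisible, hence injective in $\Ab$, this extends to a homomorphism $\psi\colon B\to\mathbb{Q}/\mathbb{Z}$, and $\psi$ is nonzero because $\psi(\bar b)\neq 0$. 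As $mB=0$, the image of $\psi$ is contained in the $m$-torsion subgroup $\graffe{x\in\mathbb{Q}/\mathbb{Z}\mid mx=0}=\frac{1}{m}\mathbb{Z}/\mathbb{Z}\cong\Zm$, so composing $A\xrightarrow{q}B\xrightarrow{\psi}\frac{1}{m}\mathbb{Z}/\mathbb{Z}\cong\Zm$ produces a nonzero element of $\Homab(A,\Zm)$, contradicting the hypothesis.

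The only step that goes beyond routine bookkeeping — and thus the main obstacle — is the extension of $\spigolose{\bar b}\hookrightarrow\mathbb{Q}/\mathbb{Z}$ to all of $B$, which relies on the injectivity of $\mathbb{Q}/\mathbb{Z}$ in $\Ab$ (equivalently, on $\mathbb{Q}/\mathbb{Z}$ being an injective cogenerator). One could instead extend the inclusion $\spigolose{\bar b}\hookrightarrow\Zm$ directly, using the self-injectivity of the ring $\Zm$ within the category of abelian groups killed by $m$; routing the argument through $\mathbb{Q}/\mathbb{Z}$ merely keeps everything within standard citable facts.
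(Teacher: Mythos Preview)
Your proof is correct. The reduction to the single condition $A=mA$ is clean, the forward direction matches the paper verbatim, and your contrapositive argument via the injectivity of $\mathbb{Q}/\mathbb{Z}$ is sound: the image of $\psi$ does land in the $m$-torsion $\tfrac{1}{m}\mathbb{Z}/\mathbb{Z}\cong\Zm$, producing the required nonzero morphism.

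The paper takes a different and more elementary route for the ``only if'' direction. Rather than invoking injectivity, it reduces to a prime $p$ dividing $m$ with $pA\subsetneq A$, observes that $A/pA$ is then a nonzero $\mathbb{Z}_p$-vector space and hence admits a surjection onto $\mathbb{Z}_p$ (by choosing a basis, or equivalently a linear functional), and finally composes with the embedding $\mathbb{Z}_p\hookrightarrow\Zm$ sending $1\mapsto m/p$. The trade-off is this: the paper's argument needs only the linear algebra fact that nonzero vector spaces surject onto the ground field, keeping the proof self-contained, while your approach packages the extension step into the standard injectivity of $\mathbb{Q}/\mathbb{Z}$, which is shorter to write and more transparently functorial but imports a heavier result. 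Both arguments are entirely standard; yours would generalize more readily to other module categories, whereas the paper's stays closer to first principles.
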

		
		\begin{proof}
			$(\Longleftarrow)$ Let $f \colon A \rightarrow \Zm$ be a morphism of groups. Consider an element $a \in A$; since $A$ is $m$-divisible, there exists an element $b \in A$ such that $a=mb$. Thus, $f(a)=f(mb)=mf(b)=0$ i.e.\ $f(a)=0$ for every $a \in A$.\\
			$(\Longrightarrow)$ We first observe that, if $A$ is $p$-divisible for every prime $p$ dividing $m$, then it is divisible for every divisor $h$ of $m$: for simplicity, suppose $h = pq$, where $p$ and $q$ are primes, and assume that $A$ is both $p$-divisible and $q$-divisible. Given any element $a \in A$, there exists $b \in A$ such that $a = pb$; now, there exists a $c \in A$ such that $b = qc$. Therefore, $a = pb = pqc = hc$, i.e., $A$ is $h$-divisible. Now, suppose that $A$ is not $h$-divisible for some divisor $h$ of $m$. Then, $A$ is not $p$-divisible for some prime $p$ dividing $m$. This is equivalent to stating that $pA$ is a proper subset of $A$ (where $pA$ denotes the collection of elements of the form $pa$, as $a$ varies in $A$). Therefore, the quotient $A/pA$ is nontrivial. Observe that $A/pA$ is a $\mathbb{Z}_p$-vector space (the action of $\mathbb{Z}$ on $A$ induces a well-defined action of $\mathbb{Z}_p$ on $A/pA$). Thus, $A/pA$ can be viewed as a direct sum of copies of $\mathbb{Z}_p$, and hence there exists a surjective morphism $\pi \colon A/pA \rightarrow \mathbb{Z}_p$. Finally, note that, since $p$ divides $m$, there is a morphism $\mathbb{Z}_p \rightarrow \Zm$ (this morphism explicitly maps $1 \in \mathbb{Z}_p$ to $\frac{m}{p} \in \Zm$). Therefore, $\Homab(A, \Zm)$ is nontrivial.
		\end{proof}
		
		In light of what has been discussed, we define the full subcategory $\tort \subseteq \coslice$ whose objects are the $(A,a) \in \coslice$ such that $A=\spigolose{\Dm(A),a}$ (the idea is to consider objects generated by their base point and their $m$-divisible part). Moreover, we introduce the torsion-free part of our torsion theory as the full subcategory $\torf \subseteq \coslice$ whose objects are the $(A,a)$ such that $\Dm(A) = \graffe{0}$.
		
		\begin{prop}
			The pair $\torsione$ determines a torsion theory, with respect to the zero class $\zeros$ given by the quotients of $(\Zm, 1)$, in the category $\coslice$.
		\end{prop}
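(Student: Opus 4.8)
\section*{Proof proposal}

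The plan is to check the three conditions of Definition~\ref{definizione preesatta} directly, using the explicit description $\Z(A,a)=(\spigolose{a},a)$, the formula for $\zeros$-kernels from Proposition~\ref{pullback in coslice}, and the fact (valid in our setting) that every morphism of $\zideal$ with codomain $(P,p_0)$ factors through the inclusion $\Z(P,p_0)=(\spigolose{p_0},p_0)\hookrightarrow(P,p_0)$.

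For condition (a) I would argue as follows. If $(A,a)\in\tort\cap\torf$ then $A=\spigolose{\Dm(A),a}$ and $\Dm(A)=\{0\}$, hence $A=\spigolose{a}$ with $h\coloneqq\ord(a)$ a divisor of $m$, so $(A,a)\cong(\Zh,1)\in\zeros$. Conversely, each $(\Zh,1)$ with $h\mid m$ satisfies $\spigolose{1}=\Zh$, so it lies in $\tort$, and since $h\mid m$ makes multiplication by $m$ the zero map on $\Zh$ one gets $\Dm(\Zh)=\{0\}$, so it lies in $\torf$. For condition (b), given $f\colon(A,a)\to(B,b)$ with $(A,a)\in\tort$ and $(B,b)\in\torf$, the inclusion $f(\Dm(A))\subseteq\Dm(B)=\{0\}$ forces $f(A)=f(\spigolose{\Dm(A),a})=\spigolose{b}$, so $f$ factors through $\Z(B,b)\in\zeros$ and hence $f\in\zideal$.

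For condition (c) I would set $\T(A,a)\coloneqq(\spigolose{\Dm(A),a},a)$ with $t$ the inclusion, $\F(A,a)\coloneqq(A/\Dm(A),\overline a)$ with $\eta$ the canonical quotient, and show that $\T(A,a)\xrightarrow{\,t\,}(A,a)\xrightarrow{\,\eta\,}\F(A,a)$ is short $\zeros$-exact. First, $\T(A,a)\in\tort$ because any divisibility chain realizing an element of $\Dm(A)$ stays inside $\Dm(A)$, so $\Dm(\spigolose{\Dm(A),a})=\Dm(A)$ and therefore $\spigolose{\Dm(\T(A,a)),a}=\T(A,a)$. Next, $t=\Zker(\eta)$ is immediate from Proposition~\ref{pullback in coslice}, since $\eta^{-1}(0)=\Dm(A)$ gives $\Zker(\eta)=(\spigolose{\Dm(A),a},a)$. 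For $\eta=\Zcoker(t)$, one checks $\eta t\in\zideal$ (its image is $\spigolose{\overline a}=\Z(\F(A,a))$); and if $p\colon(A,a)\to(P,p_0)$ has $pt\in\zideal$, then $p$ sends $\spigolose{\Dm(A),a}$ into $\spigolose{p_0}$, so $p(\Dm(A))$ is an $m$-divisible subgroup of the cyclic group $\spigolose{p_0}$, on which multiplication by $m$ vanishes; hence $p(\Dm(A))=\{0\}$, $p$ factors through $\eta$, and the factorization is unique (by surjectivity of $\eta$) and is a morphism of $\coslice$ (it sends $\overline a=\eta(a)$ to $p(a)=p_0$). It remains to prove $\F(A,a)\in\torf$, i.e.\ $\Dm(A/\Dm(A))=\{0\}$: writing $D=\Dm(A)$ and $\pi\colon A\to A/D$, for $\overline x\in\Dm(A/D)$ I would inductively lift a divisibility chain of $\overline x$ to a chain $x_0,x_1,\dots$ in $A$ with $\pi(x_0)=\overline x$ and $x_n=mx_{n+1}$, at each step correcting the chosen lift $z$ of the next term by an element of $D$ (if $x_n-mz\in D$, write $x_n-mz=mw$ with $w\in D$ using the $m$-divisibility of $D$, and replace $z$ by $z+w$); the resulting chain exhibits $x_0\in\Dm(A)=D$, so $\overline x=0$. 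With all these checks the displayed sequence is short $\zeros$-exact with ends in $\tort$ and $\torf$, so (c) holds, and functoriality of $\T$ and $\F$ follows from the uniqueness of such sequences recalled after Definition~\ref{definizione preesatta}.

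The verifications of (a), (b) and $t=\Zker(\eta)$ are routine, and the universal property of $\eta$ reduces to the elementary observation that a cyclic group annihilated by $m$ has no nontrivial $m$-divisible subgroup. The main obstacle will be the identity $\Dm(A/\Dm(A))=\{0\}$: it says that quotienting by the maximal $m$-divisible subgroup yields an ``$m$-reduced'' group, and it is exactly here that one must use the $m$-divisibility of $\Dm(A)$ itself in order to absorb the error terms that appear when lifting a divisibility chain from $A/\Dm(A)$ back to $A$.
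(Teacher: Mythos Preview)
Your proof is correct and follows the same overall structure as the paper's: same identification $\tort\cap\torf=\zeros$, same factorization argument for condition (b), and the same candidate sequence $\T(A,a)\xrightarrow{t}(A,a)\xrightarrow{\eta}\F(A,a)$ for condition (c).

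The two places where you diverge from the paper are both worth noting. For $\eta=\Zcoker(t)$ the paper takes a more conceptual route: it first shows that for any torsion object $(X,x)$ the map $(X,x)\to(X/\Dm(X),[x])$ is a maximum quotient in $\zeros$, then exhibits the relevant square as simultaneously a pullback and (via protomodularity, using \cite[Proposition 14]{Bourn protomod}) a pushout, and finally invokes Proposition~\ref{coker e max quotient}. Your direct verification of the universal property --- observing that $p(\Dm(A))$ is an $m$-divisible subgroup of the cyclic group $\spigolose{p_0}$, which is annihilated by $m$ and hence has no nontrivial $m$-divisible subgroups --- is shorter and avoids the appeal to protomodularity, at the cost of being less aligned with the general machinery of Section~\ref{preliminaries}. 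For $\Dm(A/\Dm(A))=\{0\}$, the paper writes down an explicit closed-form sequence $y_n=x_n+\sum_{i=1}^n d_i/m^{n-i+1}$ correcting all the error terms at once; your inductive lifting (correcting one step at a time using the $m$-divisibility of $\Dm(A)$) is the same idea unwound recursively, and arguably easier to follow.
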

		
		\begin{proof}
			Let us start by proving that $\tort \cap \torf = \zeros$. Let $(\Zh, 1)$ be an element of $\zeros$. Note that $\Dm(\Zh) = \graffe{0}$ (since each element of $\Zh$ has order which divides $m$), and thus $\spigolose{\graffe{0}, a} = \spigolose{a} = \Zh$. If $(A,a)$ is an element of $\tort \cap \torf$, then $\Dm(A) = \graffe{0}$ and $A = \spigolose{\Dm(A), a} = \spigolose{a} \cong \mathbb{Z}\ped{\ord(a)}$; hence, $(A,a) \cong (\mathbb{Z}\ped{\ord(a)}, 1)$. We prove that for every $(A,a) \in \tort$ and for every $(B,b) \in \torf$, $\Homcos((A,a),(B,b)) \subseteq \zideal$. Consider an arrow $f \colon (A,a) \rightarrow (B,b)$ and recall that $f(\Dm(A)) \subseteq \Dm(B)=\graffe{0}$. Since $A=\spigolose{\Dm(A),a}$, every element $x$ of $A$ can be written as $x=d+na$ (where $d \in \Dm(A)$ and $n \in \mathbb{N}$). So, $f(x)=f(d+na)=f(na)=nb$. Thus, $f$ factors as in the commutative triangle
			\[\begin{tikzcd}
				{(A,a)} && {(B,b)} \\
				& {(\spigolose{b},b) \cong (\mathbb{Z}\ped{\ord(b)}, 1).}
				\arrow["f", from=1-1, to=1-3]
				\arrow[from=1-1, to=2-2]
				\arrow[from=2-2, to=1-3]
			\end{tikzcd}\]
			To proceed, we define $\T(A,a) \coloneqq (\spigolose{\Dm(A),a}, a)$ and $\F(A,a) \coloneqq (A/\Dm(A), [a])$. Let us begin by showing that $\T(A,a) = (\spigolose{\Dm(A),a},a) \in \tort$. We observe that $\Dm(\Dm(A)) = \Dm(A)$; we only need to see that $\Dm(A) \subseteq \Dm(\Dm(A))$ (the other inclusion is obvious). Consider an element $d \in \Dm(A)$; then there exists a sequence $\suc{d_n}$ such that $d_n \in A$ for every $n \in \mathbb{N}$, with $d_0 = d$ and $d_n = md_{n+1}$. Fix a $k \in \mathbb{N}$ and consider the sequence $\suc{c_n}$ where, for each $n \in \mathbb{N}$, $c_n \coloneqq d_{n+k}$. This sequence allows us to conclude that $d_n \in \Dm(A)$, and thus, $d \in \Dm(\Dm(A))$. Hence, $\Dm(\spigolose{\Dm(A),a}) \supseteq \Dm(\Dm(A))=\Dm(A)$; this implies $\spigolose{\Dm(\spigolose{\Dm(A),a}),a} \supseteq \spigolose{\Dm(A), a}$, i.e.\ $\T(A,a) \in \tort$. Now, we prove that $\F(A,a)=(A/\Dm(A), [a]) \in \torf$, i.e.\ $\Dm(A/\Dm(A))=\graffe{0}$. To do this, consider an element $[x] \in \Dm(A / \Dm(A))$. There exists a sequence of elements $\suc{[x_n]}$ such that $[x_0] = [x]$ and $[x_n] = m[x_{n+1}]$. Thus, for each $n \in \mathbb{N}$, there exists $d_{n+1} \in \Dm(A)$ such that $x_n = mx_{n+1} + d_{n+1}$. For simplicity, given $d \in \Dm(A)$, we denote by $\frac{d}{m}$ an element of $\Dm(A)$ such that $m\frac{d}{m} = d$. Moreover, for $n \in \mathbb{N}$, $\frac{d}{m^{n+1}}$ will denote an element such that $m\frac{d}{m^{n+1}} = \frac{d}{m^n}$. We then define $y_0 \coloneqq x$ and $y_n \coloneqq x_n + \sum_{i=1}^n \frac{d_i}{m^{n-i+1}}$. Observe that $my\ped{n+1}=mx\ped{n+1}+ m\sum\ped{i=1}^{n+1}\frac{d_i}{m^{n-i+2}}=mx\ped{n+1}+d\ped{n+1}+\sum_{i=1}^n\frac{d_i}{m^{n-i+1}}=x_n+\sum_{i=1}^n\frac{d_i}{m^{n-i+1}}=y_n$. Therefore, $x \in \Dm(A)$, and thus $[x] = 0$. We want then to show that the sequence
			\[ (\spigolose{\Dm(A),a},a) \xrightarrow{t\ped{(A,a)}} (A,a) \xrightarrow{\eta\ped{(A,a)}} (A/\Dm(A),[a]) \]
			is exact, where $t\ped{(A,a)}$ is the inclusion of $\spigolose{\Dm(A),a}$ into $A$, and $\eta\ped{(A,a)}$ is the projection of $A$ on $A/\Dm(A)$.
			First, observe that given a torsion object $(X,x)$, the arrow $\chi \colon (X,x) \rightarrow (X/\Dm(X), [x])$ is the maximum quotient of $(X,x)$ in $\zeros$. Indeed, since $X = \spigolose{\Dm(X),x}$, it follows that any element $y$ of $X$ can be written as $y = d + nx$ (with $d \in \Dm(A)$). Therefore, $\chi(d + nx) = n \chi (x) = n [x]$. This implies that $(X/\Dm(X), [x]) = (\spigolose{[x]}, [x]) \in \zeros$. Now, consider an object $(\Zh,1) \in \zeros$ and an arrow $f \colon (X,x) \rightarrow (\Zh, 1)$. We observe that, for every $d \in \Dm(X)$, $f(d) = f(md') = mf(d') = 0$ (where, since $d \in \Dm(A)$, there exists $d'$ such that $d = md'$, and since $h$ divides $m$, $mk = 0$ for every $k \in \Zh$). Thus, by the universal property of the quotient, there exists a unique $\overline{f} \colon \spigolose{[x]} \rightarrow \Zh$ such that $\overline{f} \chi = f$. Moreover, $\overline{f}([x]) = f(x) = 1$. Therefore, we have the following commutative triangle in $\coslice$:
			\[\begin{tikzcd}
				{(X,x)} && {(\spigolose{[x]},[x])} \\
				& {(\Zh, 1).}
				\arrow["\chi", from=1-1, to=1-3]
				\arrow["f"', from=1-1, to=2-2]
				\arrow["{\overline{f}}", from=1-3, to=2-2]
			\end{tikzcd}\]
			Thus, $\chi$ is a maximum quotient in $\zeros$ (and, in particular, $\chi=\eta \ped{(X,x)}$). To conclude, consider the commutative square
			\[\begin{tikzcd}
				{(\spigolose{\Dm(A),a},a)} & {(\spigolose{[a]},[a])} \\
				{(A,a)} & {(A/\Dm(A), [a]),}
				\arrow["\xi", from=1-1, to=1-2]
				\arrow["{t\ped{(A,a)}}"', from=1-1, to=2-1]
				\arrow["\varepsilon", from=1-2, to=2-2]
				\arrow["{\eta\ped{(A,a)}}"', from=2-1, to=2-2]
			\end{tikzcd}\]
			where $\varepsilon$ and $t \ped{(A,a)}$ are the inclusions. For convenience, let us write $t \coloneqq t\ped{(A,a)}$ and $\eta \coloneqq \eta\ped{(A,a)}$. If we prove that the above square is both a pullback and a pushout, we are done. Indeed, since $\varepsilon$ is a monomorphism, it follows that $t = \Zker(\eta)$, and applying Proposition \ref{coker e max quotient} we deduce that $\eta = \Zcoker(t)$. Let us begin by showing that the square is a pullback. Consider an object $(C,c)$ and a pair of arrows $f \colon (C,c) \rightarrow (A,a)$ and $g \colon (C,c) \rightarrow (\spigolose{[a]}, [a])$ such that $\varepsilon g = \eta f$. Take an element $e \in C$ and observe that $\eta f(e) = [f(e)] = g(e) = n[a]$ (for some $n \in \mathbb{N}$). Therefore, $f(e) = d + n a$ with $d \in \Dm(A)$, and thus $f$ factors through $(\spigolose{\Dm(A),a},a)$. Hence the square is a pullback. Moreover, thanks to \cite[Proposition 14]{Bourn protomod}, we can conclude that the square is also a pushout. This ensures that the sequence
			\[ (\spigolose{\Dm(A),a},a) \xrightarrow{t\ped{(A,a)}} (A,a) \xrightarrow{\eta\ped{(A,a)}} (A/\Dm(A),[a]) \]
			is exact.
		\end{proof}
		
		The next step is to verify that the torsion theory we have just introduced satisfies the conditions we considered in the first part of the paper.
		
		\begin{prop}
			$\torsione$ in $\coslice$ satisfies Conditions (M) (and so (M')), (S), and (N).
		\end{prop}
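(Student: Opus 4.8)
The plan is to verify Conditions (M), (S), and (N) by direct computation in $\coslice$, relying on the explicit descriptions already obtained: for an object $(A,a)$ one has $\Z(A,a)=(\spigolose{a},a)$, $\T(A,a)=(\spigolose{\Dm(A),a},a)$ and $\F(A,a)=(A/\Dm(A),[a])$; for a morphism $f\colon(A,a)\to(B,b)$, Proposition~\ref{pullback in coslice} gives $\Zker(f)=(\spigolose{\invf{0},a},a)\hookrightarrow(A,a)$, a genuine subgroup inclusion; and, as noted in the proof that $\torsione$ is a torsion theory, for a torsion object $(X,x)$ the unit $\eta_{(X,x)}$ is the quotient projection $(X,x)\to(X/\Dm(X),[x])$, which is a maximum quotient in $\zeros$. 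The one elementary remark I would isolate up front is: if $z=mw$ in an abelian group $A$ and $z\in\Dm(A)$, then $w\in\Dm(A)$ too (extend a witnessing sequence); hence every witnessing sequence for an element of $\Dm(A)$ consists of elements of $\Dm(A)$, and an element $c$ of a subgroup $B\le A$ lies in $\Dm(B)$ as soon as it admits one witnessing sequence lying inside $B$. Recall also that Condition (M) implies Condition (M'), so it suffices to establish (M), (S) and (N).

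For Condition (N) I would argue as follows. Given $f\colon(A,a)\to(B,b)$, put $I\coloneqq\invf{0}$, so $K[f]=(\spigolose{I,a},a)$ with $k$ its inclusion into $(A,a)$, and set $J\coloneqq\Dm(\spigolose{I,a})$, a subgroup of $A$. The quotient map $q_J\colon(A,a)\to(A/J,[a])$ has $q_J^{-1}(0)=J$, so Proposition~\ref{pullback in coslice} yields $\Zker(q_J)=(\spigolose{J,a},a)\hookrightarrow(A,a)$. But $\T(K[f])=(\spigolose{\Dm(\spigolose{I,a}),a},a)=(\spigolose{J,a},a)$, and under this identification the composite $k\,t_{K[f]}$ is precisely the inclusion $(\spigolose{J,a},a)\hookrightarrow(A,a)$. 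Hence $k\,t_{K[f]}=\Zker(q_J)$ is a $\zeros$-kernel, which is Condition (N).

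For Condition (M), start from a short $\zeros$-exact sequence $(T,a)\xrightarrow{k}(A,a)\xrightarrow{q}Q$ with $(T,a)\in\tort$. Then $k=\Zker(q)$ is a subgroup inclusion, $T=\spigolose{\Dm(T),a}$ by torsion-ness, and by Proposition~\ref{coker e max quotient} the $\zeros$-cokernel $q$ is the pushout of the maximum quotient $\eta_{(T,a)}\colon(T,a)\to(T/\Dm(T),[a])$ along $k$; computing this pushout in $\Ab$ (where pushouts in $\coslice$ are formed) gives $Q\cong(A/\Dm(T),[a])$, so that $\Z(Q)=(\spigolose{[a]},[a])$, with $[a]$ the image of $a$ in $A/\Dm(T)$. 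On the other hand $\F(T,a)=(T/\Dm(T),[a])=(\spigolose{[a]},[a])$, using $T=\spigolose{\Dm(T),a}$. In both cyclic groups the order of the distinguished generator $[a]$ is $\min\{n>0\mid na\in\Dm(T)\}$ — the very same number — so the evident basepoint-preserving map is an isomorphism $\F(T,a)\cong\Z(Q)$. This proves (M), and hence (M').

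For Condition (S), take the pullback in its definition: $(T,a)\in\tort$, $\eta_{(T,a)}\colon(T,a)\to(Z,[a])$ with $Z=T/\Dm(T)=\spigolose{[a]}$, and a zero object $Z'=(\mathbb{Z}_{h'},1)$ (so $h'\mid m$) mapping to $Z$ by $1\mapsto[a]$. Formed in $\Ab$, the pullback is $T'=\{(y,n)\in T\times\mathbb{Z}_{h'}\mid[y]=n[a]\ \text{in}\ T/\Dm(T)\}$, with distinguished element $(a,1)$, and $\chi\colon T'\to\mathbb{Z}_{h'}$ is the second projection. I would then check two things. First, $T'\in\tort$: for $(y,n)\in T'$ we have $y-na\in\Dm(T)$, and a witnessing sequence $(z_i)$ for $y-na$ inside $T$ lifts to the witnessing sequence $(z_i,0)$ inside $T'$ — using that $m$ annihilates $\mathbb{Z}_{h'}$ since $h'\mid m$ — so $(y-na,0)\in\Dm(T')$ and $(y,n)=n\cdot(a,1)+(y-na,0)\in\spigolose{\Dm(T'),(a,1)}$. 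Second, $\Dm(T')=\{(y,0)\mid y\in\Dm(T)\}$: the inclusion $\supseteq$ is the previous argument, and $\subseteq$ follows because the $\mathbb{Z}_{h'}$-component of an element of $\Dm(T')$ lies in $\Dm(\mathbb{Z}_{h'})=\{0\}$ while its $T$-component then carries a witnessing sequence inside $T$. This subgroup is exactly $\ker\chi$; since $\chi$ is surjective ($(na,n)\mapsto n$) and basepoint-preserving, it induces an isomorphism $T'/\Dm(T')\cong Z'$, i.e.\ $\chi=\eta_{T'}$. So Condition (S) holds. The main obstacle here is purely bookkeeping — one must keep straight throughout that $\zeros$-kernels are honest subgroup inclusions, that (co)limits are computed in $\Ab$, and that $m$ acts as zero on every zero object — but with those in hand each of the three verifications reduces to a short diagram chase together with the remark on witnessing sequences.
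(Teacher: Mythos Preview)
Your proof is correct and follows essentially the same approach as the paper: for (N) you exhibit $kt_{K[f]}$ as $\Zker$ of the quotient by $\Dm(\spigolose{\invf{0},a})$ via Proposition~\ref{pullback in coslice}; for (M) you compute the pushout $\zeros$-cokernel as $A/\Dm(T)$ and compare cyclic groups generated by $[a]$; for (S) you compute the pullback explicitly, identify $\Dm(T')$ with $\Dm(T)\times\{0\}$, and show this equals $\ker\chi$. The only organisational difference is that you isolate the ``witnessing sequences stay in $\Dm$'' remark at the outset, which slightly streamlines the (S) argument compared to the paper's version.
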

		
		\begin{proof}
			Let us begin by showing that the torsion theory $\torsione$ satisfies Condition (M). Consider $(A,a) \in \tort$ and an exact sequence of the form $(A,a) \xrightarrow{k} (B,a) \xrightarrow{q} (C,c)$ (for simplicity, we assume that $k$ is the inclusion). Recall that $\xi \colon (A,a) \rightarrow (\spigolose{[a]}, [a])$ is a maximum quotient in $\zeros$ (where the square brackets denote the classes modulo the quotient by $\Dm(A)$). Therefore, $q$ is defined by the following pushout:
			\[\begin{tikzcd}
				{(A,a)} & {(\spigolose{[a]},a)} \\
				{(B,a)} & {(B/\Dm(A), [a]),}
				\arrow["\xi", from=1-1, to=1-2]
				\arrow["k"', from=1-1, to=2-1]
				\arrow["i", from=1-2, to=2-2]
				\arrow["q"', from=2-1, to=2-2]
				\arrow["\lrcorner"{anchor=center, pos=0, rotate=180}, draw=none, from=2-2, to=1-1]
			\end{tikzcd}\]
			where $i$ is the inclusion. To verify that the above diagram is a pushout, consider a morphism $f \colon (B,a) \rightarrow (E,e)$ and a morphism $g \colon (\spigolose{[a]}, [a]) \rightarrow (E,e)$ such that $fk = g\xi$. It follows that for every $d \in \Dm(A)$, we have $f(d) = fk(d) = g\xi(d) = g([0]) = 0$. Hence, $f$ factors through the quotient, and there exists a morphism $\overline{f} \colon (B/\Dm(A), [a]) \rightarrow (E,e)$. Moreover, $\overline{f}i = g$: take $n[a] \in \spigolose{[a]}$, then $\overline{f}i(n[a]) = \overline{f}([na]) = f(na) = nf(a) = ne = g(n[a])$. The fact that $i$ is a monomorphism ensures that Condition (M) is satisfied.
			Let us now show that $\torsione$ satisfies Condition (S). Consider a torsion object $(A,a)$ and examine the following pullback diagram:
			\[\begin{tikzcd}
				{(P,(a,1))} & {(\Zh, 1)} \\
				{(A,a)} & {(\spigolose{[a]},[a]),}
				\arrow["\chi", from=1-1, to=1-2]
				\arrow[from=1-1, to=2-1]
				\arrow[from=1-2, to=2-2]
				\arrow["\eta"', from=2-1, to=2-2]
			\end{tikzcd}\]
			where $\eta$ is given by the quotient of $A$ over $\Dm(A)$ (i.e.\ it is the $(A,a)$-component of the unit of the reflection given by $\F$). We know that $P = \graffe{(x,k) \in A \times \Zh \, | \, [x] = k[a]}$. Let us prove that $\Dm(P) = \Dm(A) \times \graffe{0}$. On the one hand, given $(x,k) \in \Dm(P)$, there exists a sequence $\suc{(x_n,k_n)}$ in $P$ such that $(x_0,k_0) = (x,k)$ and for every $n \in \mathbb{N}$, $(x_n,k_n) = m(x_{n+1}, k_{n+1})$. Hence, since $mk_n = 0$ for every $n \in \mathbb{N}$ (observe that $h$ divides $m$), the sequence becomes $\suc{(x_n,0)}$. Considering then the sequence $\suc{x_n}$, we deduce that $(x,k) = (x,0) \in \Dm(A) \times\graffe{0}$. On the other hand, given an element $(x,0) \in \Dm(A) \times\graffe{0}$, we know that there exists a sequence $\suc{x_n}$ in $A$ showing that $x$ is in $\Dm(A)$. Therefore, considering the sequence $\suc{(x_n,0)}$, we observe that this sequence proves that $(x,0)$ belongs to $\Dm(P)$ (noting that for every $n \in \mathbb{N}$, we have $[x_n] = 0$, since $x_n \in \Dm(A)$). Let us now show that $P = \spigolose{\Dm(A) \times\graffe{0}, (a,1)}$. Given $(x,k) \in P$, we know that $[x] = k[a]$ and also that $x \in A = \spigolose{\Dm(A), a}$; hence, $x = d + l a$ for some $d \in \Dm(A)$ and $l \in \mathbb{N}$. It follows that $l[a] = k[a]$, and thus we can write $(x,k) = (d + l a, k) = (d,0) + (l a, k)$ where both summands belong to $P$. Now, since $l[a] = k[a]$, we deduce that $l - k = g h'$, with $g$ some natural number and $h' = \ord([a])$. Therefore, $l = k + g h'$ and $[g h' a] = g(h' [a]) = g 0 = 0$, i.e., $g h' a \in \Dm(A)$. Then, observe that $(x,k) = (d,0) + (g h' a, 0) + k (a,1)$ (where all three summands belong to $P$). This shows that $P = \spigolose{\Dm(P), (a,1)}$, i.e., $(P, (a,1)) \in \tort$.
			Furthermore, since $\chi^{-1}(0) = \graffe{(x,0) \in P} = \Dm(A) \times\graffe{0}$, it follows that $\chi$ is the quotient of $P$ by $\Dm(P)$, and thus Condition (S) holds. Let us finally show that Condition (N) holds. Given an arrow $f \colon (A,a) \rightarrow (B,b)$, we define $K \coloneqq \spigolose{\invf{0}, a}$ and consider the sequence $(\spigolose{\Dm(K),a},a) \xrightarrow{t} (K,a) \xrightarrow{k} (A,a) \xrightarrow{f} (B,b)$, where $k = \Zker(f)$ (see Proposition \ref{pullback in coslice}) and $t = t\ped{(K,a)}$. We want to show that $kt$ is a $\zeros$-kernel. Define $q \colon (A,a) \rightarrow (A/\Dm(K), [a])$ as the projection onto the quotient. By Proposition \ref{pullback in coslice}, we know that $\Zker(q)$ is the inclusion of $(\spigolose{q^{-1}(0), a},a)$ into $(A,a)$. Since $q^{-1}(0) = \Dm(K)$, it follows that this inclusion is $kt$. Therefore, $kt$ is a $\zeros$-kernel, and thus Condition (N) holds.
		\end{proof}
		
		\begin{prop}
			The functor $\F \colon \coslice \rightarrow \torf$ associated with the torsion theory $\torsione$ is protoadditive.
		\end{prop}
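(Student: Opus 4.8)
The plan is to unwind the definition of $\zeros$-protoadditivity and reduce it to a direct computation in $\Ab$. Recall that $\F(A,a) = (A/\Dm(A),[a])$, that limits in $\coslice$ are computed as in $\Ab$, and that $\torf$ is reflective in $\coslice$, so a pullback of objects of $\torf$ is again computed as in $\Ab$ and lands in $\torf$ (if $(x,y)$ lies in the $m$-divisible part of such a pullback, then $x$ and $y$ lie in the $m$-divisible parts of the two factors, which are trivial). By definition $\F$ is $\zeros$-protoadditive if it preserves pullbacks of split epimorphisms along arrows inverted by $\Z$; I would in fact prove the stronger statement that $\F$ preserves the pullback of \emph{any} split epimorphism along \emph{any} morphism. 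So take a split epimorphism $p \colon (B,b) \rightarrow (C,c)$ with section $s$ and an arbitrary $g \colon (D,d) \rightarrow (C,c)$, form the pullback $(P,(b,d))$ in $\coslice$, and consider the canonical comparison morphism $\varphi \colon \F(P) \rightarrow \F(B) \times_{\F(C)} \F(D)$, $[(x,y)] \mapsto ([x],[y])$; since the basepoints match automatically, it suffices to show $\varphi$ is an isomorphism of abelian groups.

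The key step is to exploit the splitting. In $\Ab$ the split epimorphism $p$ gives a decomposition $B \cong N \oplus C$ with $N \coloneqq \ker p$, under which $p$ becomes the projection onto $C$ and $s$ the coprojection; hence $P = \graffe{(x,y) \in B \times D \mid p(x) = g(y)}$ is carried isomorphically onto $N \oplus D$ by $(n + s(g(y)),\, y) \mapsto (n,y)$. Since $\Dm$ preserves finite direct sums — an element $(u,v)$ of $X \oplus Y$ admits a dividing sequence exactly when $u$ and $v$ do — this yields $\Dm(P) \cong \Dm(N) \oplus \Dm(D)$ and $\Dm(B) \cong \Dm(N) \oplus \Dm(C)$, so that
\[ \F(P) \cong (N/\Dm(N)) \oplus (D/\Dm(D)), \qquad \F(B) \cong (N/\Dm(N)) \oplus (C/\Dm(C)), \]
with $\F(p)$ the projection onto $C/\Dm(C)$. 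Therefore $\F(B) \times_{\F(C)} \F(D) \cong (N/\Dm(N)) \oplus (D/\Dm(D))$ as well, and unravelling the identifications shows that $\varphi$ corresponds to the identity map. Hence $\varphi$ is an isomorphism, $\F$ preserves the pullback of $p$ along $g$, and in particular $\F$ is $\zeros$-protoadditive.

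I expect the only point with real content to be precisely the reduction of $P$ to a direct sum via the section $s$: without the splitting the comparison $\varphi$ need not be surjective, since a dividing sequence in $B$ and one in $D$ need not agree over $C$ — for example, in $\Ab$ with $m = 2$ the pullback of the two quotient maps $\mathbb{Q} \rightarrow \mathbb{Q}/\mathbb{Z}$ is torsion-free but not $2$-divisible, while all three groups involved are. This is also why the hypothesis that the base change be along an arrow inverted by $\Z$ is not actually needed here. The remaining verifications — that $\varphi$ is a morphism of $\coslice$, that $\Dm$ commutes with $\oplus$, and that the various identifications are mutually compatible — are routine.
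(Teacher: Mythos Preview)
Your proof is correct and, like the paper's, actually establishes the stronger fact that $\F$ preserves pullbacks of split epimorphisms along \emph{arbitrary} morphisms. The routes diverge after setting up the comparison map $\varphi \colon \F(P) \to \F(B)\times_{\F(C)}\F(D)$, however. The paper verifies directly that $\varphi$ is a bijection: for surjectivity, given $([x],[y])$ with $f(x)-g(y)=d\in\Dm(B)$, it uses the section to replace $x$ by $x-s(d)$; for injectivity, given dividing sequences $(x_n)$ in $A$ and $(y_n)$ in $C$, it sets $d_n\coloneqq f(x_n)-g(y_n)$ and shows that $(x_n-s(d_n),y_n)$ is a dividing sequence inside $E$. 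Your argument instead front-loads the use of the section into the structural decomposition $B\cong N\oplus C$, $P\cong N\oplus D$, after which the only algebraic input needed is the additivity of $\Dm$ on direct sums; bijectivity of $\varphi$ then falls out formally. Both approaches hinge on the same idea --- the section lets one separate the ``kernel'' coordinate from the ``base'' coordinate --- but yours isolates this once and for all, while the paper applies it twice in elementwise form. Your counterexample with the two maps $\mathbb{Q}\to\mathbb{Q}/\mathbb{Z}$ is a nice touch showing why the splitting is essential.
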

		
		\begin{proof}
			Recall that $\F(A,a) = (A/\Dm(A), [a])$. Consider the pullback diagram
			\[\begin{tikzcd}
				{(E,(a,c))} & {(C,c)} \\
				{(A,a)} & {(B,b)}
				\arrow["{\pi_C}", from=1-1, to=1-2]
				\arrow["{\pi_A}"', from=1-1, to=2-1]
				\arrow["\lrcorner"{anchor=center, pos=0.125}, draw=none, from=1-1, to=2-2]
				\arrow["g", from=1-2, to=2-2]
				\arrow["f"', shift right, from=2-1, to=2-2]
				\arrow["s"', shift right, from=2-2, to=2-1]
			\end{tikzcd}\]
			where $fs=id\ped{(B,b)}$. Now, construct the pullback of $\F(f)$ along $\F(g)$
			\[\begin{tikzcd}
				{(E/\Dm(E),[a,c])} \\
				& {(H,([a],[c]))} & {(C/\Dm(C),[c])} \\
				& {(A/\Dm(A),[a])} & {(B/\Dm(B),[b])}
				\arrow["{\varphi}", from=1-1, to=2-2]
				\arrow["{\F(\pi_C)}", curve={height=-12pt}, from=1-1, to=2-3]
				\arrow["{\F(\pi_A)}"', curve={height=12pt}, from=1-1, to=3-2]
				\arrow["{p_C}", from=2-2, to=2-3]
				\arrow["{p_A}"', from=2-2, to=3-2]
				\arrow["\lrcorner"{anchor=center, pos=0.125}, draw=none, from=2-2, to=3-3]
				\arrow["{\F(g)}", from=2-3, to=3-3]
				\arrow["{\F(f)}"', shift right, from=3-2, to=3-3]
				\arrow["{\F(s)}"', shift right, from=3-3, to=3-2]
			\end{tikzcd}\]
			and consider the induced arrow $\varphi \colon (E/\Dm(E),[a,c]) \to (H,([a],[c]))$, which is given by $\varphi([x,y])=([x],[y])$. We first show that $\varphi$ is surjective. Consider $([x], [y]) \in H$ such that $\F(f)([x]) = \F(g)([y])$. Then, $[f(x)] = [g(y)]$, which implies $f(x) - g(y) = d \in \Dm(B)$. This leads to $s(d) \in \Dm(A)$. Notice that $(x - s(d), y) \in E$: $f(x-s(d))=f(x)-fs(f(x)-g(y))=f(x)-f(x)+g(y)=g(y)$. Finally, we have $\varphi[x - s(d), y] = ([x - s(d)], [y]) = ([x], [y])$ (the last equality holds since $s(d) \in \Dm(A)$). Therefore, $\varphi$ is surjective. Now, let us prove that $\varphi$ is injective. Let $(x, y) \in E$ be such that $[x] = 0$ and $[y] = 0$. This implies $x \in \Dm(A)$ and $y \in \Dm(C)$. Moreover, since $(x, y) \in E$, we know that $f(x) = g(y)$. We want to show that $[x, y] = 0$. Since $x \in \Dm(A)$ and $y \in \Dm(C)$, there exist sequences $\suc{x_n} \subseteq A$ and $\suc{y_n} \subseteq C$ such that $x = x_0$, $y = y_0$, and for every $n \in \mathbb{N}$, $x_n = mx_{n+1}$ and $y_n = my_{n+1}$. For each $n \in \mathbb{N}$, we define $d_n \coloneqq f(x_n) - g(y_n)$. It is immediate to observe that for every $n \in \mathbb{N}$, $d_n = md_{n+1}$. We also note that $(x_n - s(d_n), y_n) \in E$ for every $n \in \mathbb{N}$. Indeed, $f(x_n-s(d_n))=f(x_n)-fs(f(x_n)-g(y_n))=g(y_n)$. Recalling that $d_0 = f(x_0) - g(y_0) = f(x) - g(y) = 0$, we observe that the sequence $\suc{(x_n - s(d_n), y_n)}$ proves that $(x, y) \in \Dm(E)$. Therefore, $[x, y] = 0$ and so $\varphi$ is injective.
		\end{proof}
		Also in this case, we conclude by studying the factorization system and the Galois structure induced by the torsion theory. Consider an arrow $f \colon (A,a) \to (B,b)$ in $\coslice$, and observe that the factorization
		\[\begin{tikzcd}
			{(A,a)} && {(B,b)} \\
			& {(A/\Dm(\spigolose{\invf{0},a}),[a]),}
			\arrow["f", from=1-1, to=1-3]
			\arrow[two heads, "e"', from=1-1, to=2-2]
			\arrow["m"', from=2-2, to=1-3]
		\end{tikzcd}\]
		where $e$ is the quotient map and $m$ is induced by the quotient, is precisely our $(\facte, \factm)$-factorization. Indeed, it suffices to recall that $\Zker(f) = (\spigolose{\invf{0},a}, a)$ and that the torsion part of $\Zker(f)$ is $(\spigolose{\Dm(\spigolose{\invf{0},a}),a},a)$. To simplify the notation, we observe that $\Dm(\spigolose{\invf{0},a}) = \Dm(\invf{0})$. On the one hand, $\Dm(\invf{0}) \subseteq \Dm(\spigolose{\invf{0},a})$. On the other hand, if we consider $h + na \in \Dm(\spigolose{\invf{0},a})$ (with $f(h) = 0$ and $n \in \mathbb{Z}$), we know that there exist $h_1 \in \invf{0}$ and $n_1 \in \mathbb{N}$ such that $h + na = m(h_1 + n_1 a) = mh_1 + n_1(ma) = mh_1$ (since $ma = 0$). Therefore, $h + na \in \Dm(\invf{0})$. So, we can rewrite the factorization of $f$ as follows:
		\[\begin{tikzcd}
			{(A,a)} && {(B,b)} \\
			& {(A/\Dm(\invf{0}),[a]).}
			\arrow["f", from=1-1, to=1-3]
			\arrow[two heads, "e"', from=1-1, to=2-2]
			\arrow["m"', from=2-2, to=1-3]
		\end{tikzcd}\]
		
		Finally, we know that a surjective morphism in the coslice category is a central extension for the structure $\Gamma_{\torf}$ if and only if its $\zeros$-kernel belongs to $\torf$. In other words, this is equivalent to requiring that $\Dm(\spigolose{\invf{0},a}) = \graffe{0}$. However, as we have just seen, this is in turn equivalent to asking that $\Dm(\invf{0}) = \graffe{0}$.
		
		\section*{Acknowledgements}
		Both authors are members of the Gruppo Nazionale per le Strutture Algebriche, Geometriche e le loro Applicazioni (GNSAGA) dell'Istituto Nazionale di Alta Matematica ``Francesco Severi''.
		
		This work was supported by Shota Rustaveli National Science Foundation of Georgia (SRNSFG), through grant FR-24-9660.

	\end{document}